\newcommand{\R}{\ensuremath{\; \mathbb{R}}}
\newcommand{\N}{ \ensuremath{\mathbb{N}}}
\newcommand{\dx}{\mathrm{d}x}
\newcommand{\dt}{\mathrm{d}t}
\newcommand{\dpsi}[2]{\mathrm{d}\Psi_{#1 \frac{1}{2}}^{#2}}
\newcommand{\flux}[2]{\mathcal{F}_{u,\,#1 \frac{1}{2}}^{#2}}
\newcommand{\h}[1]{h_{#1 \frac{1}{2}}}
\newcommand{\sn}[2]{\sum\limits_{k=#1}^{#2 - 1}}
\newcommand{\si}[2]{\sum\limits_{i=#1}^{#2}}
\newcommand{\som}[3]{\sum\limits_{#3=#1}^{#2}}
\newcommand{\intt}[3]{\int\limits_{#1}^{#2} #3 \mathrm{d}t}
\newcommand{\intm}[4]{\int\limits_{#1}^{#2} #3 \, \mathrm{d}#4}
\newcommand{\Htau}{\mathcal{H}_{\mathcal{T}}}
\newcommand{\Htaudelta}{\mathcal{H}_{\mathcal{T},\, \Delta t}}
\newcommand{\Htaum}{\mathcal{H}_{\mathcal{T}_m}}
\newcommand{\Htaudeltam}{\mathcal{H}_{\mathcal{T}_m,\, \Delta t_m}}
\newcommand{\tild}[1]{\widetilde{#1}}
\newcommand{\fhm}[1]{#1_{h_m}}
\newcommand{\bhat}[1]{\widehat{#1}}
\newcommand{\Dpsi}[1]{\Delta \Psi_{#1}^{pzc}}
\newcommand{\normeldeuxhun}[1]{\Vert #1 \Vert_{0; 1, \mathcal{T}}}
\newcommand{\normeun}[1]{\Vert #1 \Vert_{1, \, \mathcal{T}}}
\newcommand{\normeunm}[1]{\Vert #1 \Vert_{1, \, \mathcal{T}_m}}
\newcommand{\normeduale}[1]{\Vert #1 \Vert_{-1,2, \, \mathcal{T}}}
\newcommand{\normedualem}[1]{\Vert #1 \Vert_{-1,2, \, \mathcal{T}_m}}
\newcommand{\normezero}[1]{\Vert #1 \Vert_{0}}
\newcommand{\normelinftyzeroun}[1]{\Vert #1 \Vert_{\mathrm{L}^{\infty}(0,1)}}
\theoremstyle{plain}
\newtheorem{theo}{Theorem}[section]
\theoremstyle{plain}
\newtheorem{prop}{Proposition}[section]
\theoremstyle{plain}
\newtheorem{lem}{Lemma}[section]
\theoremstyle{plain}
\newtheorem{cor}{Corollary}[section]
\theoremstyle{definition}
\newtheorem{defi}{Definition}[section]
\begin{document}

\IJFVtitle{Finite volume scheme for a corrosion model}{Convergence of a finite volume scheme for a corrosion model}

\IJFVauthor{Claire Chainais-Hillairet} 
\IJFVinstitution{Laboratoire Paul Painlev\'e, CNRS UMR 8524, Universit\'e Lille 1,\newline 59655 Villeneuve d'Ascq Cedex, France} 
\IJFVemail{claire.chainais@math.univ-lille1.fr}
\IJFVauthor{Pierre-Louis Colin}
\IJFVinstitution{Laboratoire Paul Painlev\'e, CNRS UMR 8524, Universit\'e Lille 1,\newline 59655 Villeneuve d'Ascq Cedex, France}
\IJFVemail{pierre-louis.colin@ed.univ-lille1.fr}
\IJFVauthor{Ingrid Lacroix-Violet}
\IJFVinstitution{Laboratoire Paul Painlev\'e, CNRS UMR 8524, Universit\'e Lille 1,\newline 59655 Villeneuve d'Ascq Cedex, France }
\IJFVemail{ingrid.violet@univ-lille1.fr}
\IJFVauthor{Pierre-Louis Colin$^\dagger$}
\IJFVinstitution{$^\dagger$Laboratoire Paul Painlev\'e, CNRS UMR 8524, Universit\'e Lille 1,\newline 59655 Villeneuve d'Ascq Cedex, France}
\IJFVemail{pierre-louis.colin@ed.univ-lille1.fr}
\IJFVauthor{Ingrid Lacroix-Violet$^\dagger$}
\IJFVinstitution{$^\dagger$Laboratoire Paul Painlev\'e, CNRS UMR 8524, Universit\'e Lille 1,\newline 59655 Villeneuve d'Ascq Cedex, France}
\IJFVemail{ingrid.violet@univ-lille1.fr}

\IJFVabstract{
In this paper, we study the numerical approximation of a system of partial differential equations describing the corrosion of an iron based alloy in a nuclear waste repository. In particular, we are interested in the convergence of a numerical scheme consisting in an implicit Euler scheme in time and a Scharfetter-Gummel finite volume scheme in space.
}

\IJFVkeywords{finite volume scheme, corrosion model, convergence analysis, drift-diffusion system}


\section{Introduction}


\subsection{General framework of the study}

At the request of the French nuclear waste management agency ANDRA, investigations are conducted to evaluate the long-term safety assessment of the geological repository of high-level radioactive waste. The concept of the storage under study in France is the following: the waste is confined in a glass matrix, placed into cylindrical steel canisters and stored in a claystone layer at a depth of several hundred of meters. The long-term safety assessment of the geological repository has to take into account the degradation of the carbon steel used for waste overpacks, which is mainly caused by generalized corrosion processes.

In this framework, the Diffusion Poisson Coupled Model (DPCM) has been proposed by C. Bataillon {\it et  al.} in \cite{BCD10} in order to describe corrosion processes at the surface of the steel canisters. It assumes that the metal is covered by a dense oxide layer which is in contact with the claystone. The model describes the evolution of the dense oxide layer. In most industrial cases, the shape of metal pieces is not flat (container or pipe). But the oxide layer is very thin compared to the size of the exposed surface. In practice,
the available data are averaged over the whole exposed surface and the microscopic or
even macroscopic heterogeneities of materials are not taken into account. For these reasons, a 1D modeling has been proposed in order to describe the real system.

The oxide layer behaves as a semiconductor: charge carriers, like electrons, cations $F\!e^{3+}$ and oxygen vacancies,  are convected by the electric field and the electric potential is coupled to the charge densities. The DPCM model is then made of drift-diffusion equations on the charge densities coupled with a Poisson equation on the electric potential. The boundary conditions induced by the electrochemical reactions at the interfaces are Robin boundary conditions. Moreover, the system includes moving boundary equations.

Numerical methods for the approximation of the DPCM model  have been designed and studied  by Bataillon {\em et al } in \cite{BBC12}. Numerical experiments with real-life data shows the ability of the model to reproduce expected physical behaviors. However, the proof of convergence of the scheme is challenging. In this paper, we will focus on a simplified model with only two species: electrons and cations $F\!e^{3+}$. As the displacement of the interfaces in the full DPCM model is due to the current of oxygen vacancies, which are not taken into account in this study, the simplified model will be posed on a fixed domain. These simplifications will permit us to show how to deal with the boundary conditions for the numerical analysis of the scheme introduced in \cite{BBC12}.


\subsection{Presentation of the model}

In this paper we focus on the simplified corrosion model already introduced  in \cite{BBC12, CV13}.  The unknowns of the model are the densities of electrons $N$ and cations $F\!e^{3+}$ $P$ and the electric potential $\Psi$. The current densities are respectively denoted by $J_{N}$ and $J_{P}$; they contain both a drift and a diffusion part. Therefore, the model consists in two drift-diffusion equations on the densities, coupled with a Poisson equation on the electric potential. Let $T>0$, the model is written as 
\begin{subequations} \label{modele}
\begin{gather}
\partial_{t} P + \partial_{x} J_{P} = 0 , \qquad J_{P} = -\partial_{x} P - 3 P \partial_{x} \Psi, \quad  \text{in} \ (0,1)\times(0,T), \label{equP} \\
\varepsilon \partial_{t} N + \partial_{x} J_{N} = 0 , \qquad J_{N} = -\partial_{x} N + N \partial_{x} \Psi, \quad \text{in} \  (0,1)\times(0,T), \label{equN} \\
- \lambda^{2} \partial_{xx}^{2} \Psi = 3 P - N + \rho_{hl}, \quad \text{in} \  (0,1)\times(0,T), \label{equpsi}
\end{gather}
\end{subequations}
where $ \lambda$ is the rescaled Debye length and $ \rho_{hl} $ the net charge density of the ionic species in the host lattice which is constant. The parameter $ \varepsilon $ stands for the ratio of the mobility coefficients of electrons and cations, then $ \varepsilon \ll 1 $. 

As the equations (\ref{equP}) and (\ref{equN}) for charge carriers densities have the same form, we will use the following synthetical form:
\begin{align}
\varepsilon_u \partial_{t} u + \partial_{x} J_{u} = 0 , \qquad J_{u} = -\partial_{x} u - z_u u \partial_{x} \Psi, \quad \text{in} \ (0,1)\times(0,T). \label{equu}
\end{align}
For $ u=P,N $, the charge numbers of the carriers are respectively $z_{u}=3, -1$ and we respectively have $\varepsilon_{u}=1, \varepsilon$.

Let us now focus on the boundary conditions. Charge carriers are created and consumed at both interfaces $x=0$ and $x=1$.  
The kinetics of the electrochemical reactions at the interfaces are described by Butler-Volmer laws. It leads to Robin boundary conditions on $N$ and $P$. As in \cite{BBC12}, we assume that the boundary conditions for $P$ and $N$ have exactly the same form. Therefore, for $u=P,N$, they are written as
\begin{subequations}\label{CLu}
\begin{align}
-J_u=r_u^0(u,\Psi)\quad &\mbox{ on } \left\{x=0\right\}\times (0,T), \label{CLu0}\\
J_u=r_u^1(u,\Psi,V)&\mbox{ on } \left\{x=1\right\}\times (0,T), \label{CLu1}
\end{align}
\end{subequations}
where $V$ is a given applied potential (we just consider here the potentiostatic case) and $r_u^0$ and $r_u^1$ are linear and monotonically increasing functions with respect to their first argument. More precisely, due to the electrochemical reactions at the interfaces, we have, for $u=P,N$: 
\begin{subequations}\label{defru}
\begin{align}
r_u^0(s,x)&=\beta_u^0(x)s-\gamma_u^0(x),\label{defru0}\\
r_u^1(s,x,V)&=\beta_u^1(V-x)s-\gamma_u^1(V-x),\label{defru1}
\end{align}
\end{subequations}
where the functions $(\beta_u^i)_{i=0,1}, (\gamma_u^i)_{i=0,1}$ are given functions. These functions depend on many parameters: the interface kinetic coefficients $(m_u^i, k_u^i)_{i=0,1}$, the positive transfer coefficients $(a_u^i, b_u^i)_{i=0,1}$, the maximum occupancy for octahedral cations in the lattice $P^{max}$ and the electron density in the state of metal $N^{max}$. For $u=P,N$, the functions  $(\beta_u^i)_{i=0,1}, (\gamma_u^i)_{i=0,1}$ are written 
\begin{subequations}\label{defbetagamma}
\begin{align}
\beta_u^i(x)&=m_u^i e^{-z_u b_u^i x}+k_u^ie^{z_u a_u^i x},~i=0,1,\label{betau}\\
\gamma_u^0(x)&=m_u^0u^{max}e^{-z_u b_u^0 x},~\gamma_u^1(x)=k_u^1u^{max}e^{z_u a_u^1 x}.\label{gammau}
\end{align}
\end{subequations}

Throughout the paper, we will assume that the interface kinetic coefficients and the transfer coefficients  are given constants which satisfy
\begin{equation}\label{hyp_mk}
m_u^0, k_u^0, m_u^1, k_u^1 >0, \mbox{ for } u=P,N,
\end{equation}
\begin{equation}\label{hyp_ab}
a_u^0, b_u^0, a_u^1, b_u^1 \in [0,1], \mbox{ for } u=P,N.\end{equation}
We also assume that $\rho_{hl}$ does not depend on $x$ and that
\begin{equation}\label{hyp_NPrho}
3P^{max}-N^{max}+\rho_{hl}=0.
\end{equation}
Indeed, in the applications (see \cite{BCD10}), the scaling of the model leads to $\rho_{hl}=-5$, $P^{max}=2$ and $N^{max}=1$, so that the relation \eqref{hyp_NPrho} is satisfied. 

The boundary conditions for the Poisson equation take into account that the metal and the solution can be charged because they are respectively electronic and ionic conductors. Such an accumulation of charges induces a field given by the Gauss law. These accumulations of charges depend on the voltage drop at the interface given by the usual Helmholtz law which links the charge to the voltage drop through a capacitance. The parameters $\Delta \Psi_0^{pzc}$ and $\Delta \Psi_1^{pzc}$ are the voltage drop corresponding to no accumulation of charges respectively in the metal and in the solution. Finally, the boundary conditions for the electric potential are written:
\begin{subequations}\label{CLpsi}
\begin{align}
\Psi-\alpha_0\partial_x \Psi = \Delta \Psi_0^{pzc},\quad &\mbox{ on } \left\{x=0\right\}\times (0,T), \label{CLpsi0}\\
\Psi+\alpha_1\partial_x \Psi = V-\Delta \Psi_1^{pzc},&\mbox{ on } \left\{x=1\right\}\times (0,T), \label{CLpsi1}
\end{align}
\end{subequations}
where $\alpha_0$ and $\alpha_1$ are positive dimensionless parameters arising in the scaling. 

The system is supplemented with initial conditions, given in $ L^{\infty}(0,1) $:
\begin{align} \label{CI}
u(x,0)=u^{0}(x), \quad \text{for} \ u=P, N.
\end{align}
Moreover, we assume that these initial conditions satisfy
\begin{align}\label{estiLinfu01}
0\leqslant u^0 \leqslant u^{max},  \ \text{a.e. on} \  (0,1), \quad\text{for} \ u=P, N.
\end{align}
In the remainder of the paper we will denote by $(\mathcal{P})$ the corrosion model defined by \eqref{modele}, \eqref{defru}, \eqref{defbetagamma}, \eqref{CLpsi} and \eqref{CI}.
Let us first note that the system of equations \eqref{modele} is  the so-called linear drift-diffusion system. This model is currently used in the framework of semiconductors device modeling (see for instance \cite{vR,Jun01,Mar90,Mar86}) or plasma physics (see \cite{Ch1984}). In this context the drift-diffusion model has been widely studied, from the analytical as from the numerical point of view. Let us refer to the pioneering work by Gajewski \cite{Ga1985} about existence and uniqueness results. Further developments have been done in \cite{Ga1994,FaIt95,Ju1994,DaV96}. The long-time behavior of solutions to the drift-diffusion system {\em via} an entropy method has been studied in \cite{GG96, Jun95} and the stability at the quasi-neutral limit or at the zero-electron-mass limit in \cite{Ga01,JP00,JuPe00_2,JuVi2007}.
 Different methods have also been proposed for the approximation of the drift-diffusion system, and studied, {see for instance \cite{ChLiPe03,CP03} for finite volume schemes, \cite{SaSa97} for a mixed finite volume scheme and \cite{BrMaPi89,BMPcras,BMPsinum,ChCo95} for finite element and mixed exponential fitting schemes.}

In the modeling of semiconductor devices, the boundary conditions are generally mixed Dirichlet/Neumann boundary conditions (corresponding to the ohmic contacts and the insulated boundary segments of the device). Then, the originality of the corrosion model described in this paper lies in the boundary conditions \eqref{defru}, \eqref{CLpsi} which are of Robin type, and induce an additional coupling between the equations.  Let us define the notion of weak solution to the corrosion model $(\mathcal{P})$.
\begin{defi}\label{solfaible}
We say that $ (P, N, \Psi) \in L^2(0,T;{H}^1(0,1)) \cap L^{\infty}([0,T]\times[0,1]) $ is a weak solution of $(\mathcal{P})$, if for all $ \varphi $ in $ L^2(0,T;H^1(0,1))$,
\begin{multline}\label{DDufaible}
-\varepsilon_u\int_0^T \!\!\!\int_0^1 u \partial_t{\varphi} dxdt -\varepsilon_u \int_0^1 u_0(x){\varphi}(0,x)dx-\int_0^T\!\!\! \int_0^1 \left(-\partial_x u-z_uu\partial_x\Psi\right)\partial_x {\varphi} dx dt  \\
+ \int_0^T \left[\left(\beta_u^1(V-\Psi(t,1))u(t,1)-\gamma_u^1(V-\Psi(t,1))\right){\varphi}(t,1) \right. \\
+\left. \left(\beta_u^0(\Psi(t,0))u(t,0)-\gamma_u^0(\Psi(t,0))\right){\varphi}(t,0)\right]dt=0, \mbox{ for } u=P,N,
\end{multline}
and
\begin{multline}\label{poissonfaible}
\lambda^2 \int_0^T \!\!\!\int_0^1 \partial_x \Psi \partial_x {\varphi} dx dt - \int_0^T \frac{\lambda^2}{\alpha_1} \left(V-\Psi(t,1)-\Delta \Psi_1^{pzc}\right){\varphi}(t,1)dt \\
+ \int_0^T \frac{\lambda^2}{\alpha_0} \left(\Psi(t,0)-\Delta \Psi_0^{pzc}\right){\varphi}(t,0)dt =\int_0^T \!\!\!\int_0^1 (3P-N+\rho_{hl}){\varphi} dx dt. 
\end{multline}

\end{defi}

In \cite{CV13}, Chainais-Hillairet and Lacroix-Violet have proved, under some assumptions on the chemical and physical parameters, the existence of a  weak solution  to $(\mathcal{P})$. This result is obtained by passing to the limit in an approximate solution given by a semi-discretization in time. Convergence of the sequence of approximate solutions is ensured by some estimates which yield compactness. In the current article, we apply the same ideas as in \cite{CV13} to a full discretization of $(\mathcal{P})$ presented below.


\subsection{Presentation of the numerical scheme}

Some numerical schemes for the approximation of $(\mathcal{P})$ have been proposed by Bataillon {\em et al}  in \cite{BBC12}. Their stability analysis is fulfilled but no convergence results are given. Here we are interested in the convergence analysis of the fully implicit scheme introduced in \cite{BBC12}. It is a backward Euler scheme in time and a finite volume scheme in space, with Scharfetter-Gummel approximation of the convection-diffusion fluxes. 

Let us consider a mesh $ \mathcal{T} $ for the domain $ [0,1] $. It consists in a family of mesh cells denoted by $ \left( x_{i-\frac{1}{2}} , x_{i+\frac{1}{2}} \right) $ for $i\in \llbracket 1;I\rrbracket$, with  
\begin{align*}
 0 = x_{1/2} < x_{3/2} < \cdots < x_{I-1/2} < x_{I+1/2} = 1.
\end{align*}
Then, we define $ x_{i} = \dfrac{x_{i+1/2} + x_{i-1/2}}{2} $, for $ i\in \llbracket 1;I\rrbracket$ and $x_0=x_{1/2}=0$, $x_{I+1}=x_{I+1/2}=1$. Moreover, we set 
\begin{align*}
& h_{i} = x_{i+\frac{1}{2}} - x_{i-\frac{1}{2}}, \qquad \forall \, i\in \llbracket 1;I\rrbracket,\\
& h_{i+\frac{1}{2}} = x_{i+1} - x_{i}, \qquad \forall \, i\in \llbracket 0;I\rrbracket.
\end{align*}
The mesh size is defined by  $ h= \max \left\{ h_{i}, i\in \llbracket 1;I\rrbracket \right\} $. 

Let us denote by  $ \Delta t $ the time step. We will assume that there exists $K\in\N$ such that $ K \Delta t = T $  (either, we would define $K$ as the integer part of $T/\Delta t$). We  consider the sequence $ (t^k)_{0 \leqslant k \leqslant K} $ such that $ t^k = k \Delta t $. 

The scheme under study in this paper is written as follows. For $i\in \llbracket 1;I\rrbracket$, $k\in \llbracket 0;K-1\rrbracket$, 
\begin{subequations}\label{scheme}
\begin{gather}
- \lambda^{2} \left( \mathrm{d} \Psi_{i + \frac{1}{2}}^{k+1} -\mathrm{d} \Psi_{i - \frac{1}{2}}^{k+1}  \right) = h_{i} \left( 3 P_{i}^{k+1} - N_{i}^{k+1} + \rho_{hl} \right),\label{eqpsinum} \\
\varepsilon_u h_{i} \dfrac{u_{i}^{k+1} - u_{i}^{k}}{\Delta t} + \mathcal{F}_{u,i + \frac{1}{2}}^{k+1} - \mathcal{F}_{u,i - \frac{1}{2}}^{k+1}= 0, \mbox{ for } u=P,N,\label{equnum}
\end{gather}
\end{subequations}
with the numerical fluxes defined for $i\in \llbracket 0;I\rrbracket$ by
\begin{subequations}
\begin{gather}
\mathrm{d} \Psi_{i + \frac{1}{2}}^{k+1} = \dfrac{\Psi_{i+1}^{k+1} - \Psi_{i}^{k+1} }{h_{i+\frac{1}{2}}}, \label{fluxpsinum}\\
\mathcal{F}_{u,i + \frac{1}{2}}^{k+1} = \dfrac{B \left( z_u h_{i+\frac{1}{2}} \mathrm{d} \Psi_{i + \frac{1}{2}}^{k+1} \right) u_{i}^{k+1} - B \left( - z_u h_{i+\frac{1}{2}} \mathrm{d} \Psi_{i + \frac{1}{2}}^{k+1} \right) u_{i+1}^{k+1}}{h_{i + \frac{1}{2}}}, \qquad \mbox{ for } u=P,N, \label{fluxunum}
\end{gather}
\end{subequations}
where $ B $ is the Bernoulli function :
\begin{align*}
B(x) = \dfrac{x}{e^{x} - 1}, \ \forall x \neq 0 \ \ \text{and} \ \ B(0) = 1.
\end{align*}
We supplement the scheme with the discretization of the boundary conditions: for $k\in \llbracket 0;K-1\rrbracket$, 
\begin{subequations}\label{CBdisc}
\begin{gather}
\Psi_{0}^{k+1}  - \alpha_{0} \mathrm{d} \Psi_{\frac{1}{2}}^{k+1}  = \Delta \Psi_{0}^{pzc}, \label{CBPsi0} \\
\Psi_{I+1}^{k+1}  + \alpha_{1} \mathrm{d} \Psi_{I + \frac{1}{2}}^{k+1}  = V - \Delta \Psi_{1}^{pzc}, \label{CBPsi1} \\
- \mathcal{F}_{u,\frac{1}{2}}^{k+1} = \beta_{u}^{0} \left( \Psi_{0}^{k+1} \right) u_{0}^{k+1} - \gamma_{u}^{0} \left( \Psi_{0}^{k+1} \right)  \qquad \mbox{ for } u=P,N, \label{CBFluxu0} \\
 \mathcal{F}_{u,I +\frac{1}{2}}^{k+1} = \beta_{u}^{1} \left( V - \Psi_{I+1}^{k+1}\right) u_{I+1}^{k+1} - \gamma_{u}^{1} \left( V - \Psi_{I+1}^{k+1} \right)  \qquad \mbox{ for } u=P,N, \label{CBFluxu1}
\end{gather}
\end{subequations}
and of the initial conditions: for $i\in \llbracket 1;I\rrbracket$,
\begin{align} \label{CIdisc}
u_{i}^{0} = \dfrac{1}{h_{i}} \int_{x_{i-\frac{1}{2}}}^{x_{i+\frac{1}{2}}} u^{0}(x) \, \mathrm{d}x, \quad  \mbox{ for } u=P,N.
\end{align}
The scheme \eqref{scheme}-\eqref{CIdisc} will be denoted in what follows by $(\mathcal{S})$.

\begin{remark}
The choice of the Bernoulli function for $B$ corresponds to a Scharfetter-Gummel approximation of the convection-diffusion fluxes. These numerical fluxes have been introduced by Il'in in \cite{ilin} and Scharfetter and Gummel in \cite{SG69} for the numerical approximation of the drift-diffusion system arising in semiconductor modelling.   Lazarov, Mishev and Vassilevsky in \cite{LMV96} have established that they are second-order accurate in space.  Dissipativity of the Scharfetter-Gummel scheme with a backward Euler time discretization for the classical drift-diffusion system was proved in \cite{GG96} and Chatard in \cite{Cha11}. One crucial property of the Scharfetter-Gummel fluxes is that they generally preserve steady-states.

\end{remark}

\begin{remark}
The scheme $(\mathcal{S})$ is written on a nonuniform discretization of the domain $[0,1]$. Indeed, the numerical experiments done in \cite{BCH08} for the steady-state of $({\mathcal S})$ show some boundary layers for the density profiles. Therefore, it seems relevant to use  a mesh which is  refined near the boundaries. We will use a Tchebychev mesh, already introduced in \cite{BCH08} and \cite{BBC12}. 

For the discretization in time, it is easier to deal with a fixed time step when studying the convergence. However, it would be also possible to introduce a variable time step. Adaptive time step strategy could also be taken into account, see \cite{BBC12}.
\end{remark}


\subsection{Main results} \label{sectionmainresult}

The aim of this paper is to prove the convergence of a sequence of approximate solutions obtained with the numerical scheme $(\mathcal{S})$ to a solution of $(\mathcal{P})$. 

To this end, we first need to establish the existence of a solution to the scheme. Indeed, at  each time step $k\in \llbracket 0;K-1\rrbracket$, the vector of discrete unknowns \break$ ({\mathbf P}^{k+1}, {\mathbf N}^{k+1}, {\mathbf \Psi}^{k+1})$, with  ${\mathbf P}^{k+1}=(P_i^{k+1})_{0 \leqslant i \leqslant I+1}$,  ${\mathbf N}^{k+1}=(N_i^{k+1})_{0 \leqslant i \leqslant I+1}$ , \break${\mathbf \Psi}^{k+1}=(\Psi_i^{k+1})_{0 \leqslant i \leqslant I+1} $, is defined as a solution to the nonlinear system of equations \eqref{scheme}--\eqref{CBdisc}. In \cite{BBC12}, the existence of a solution has been proved  only in the case where $\varepsilon >0$. 
As stated in Proposition \ref{PROP}, the result holds even if $\varepsilon=0$. 

\begin{prop}\label{PROP}
Let $\varepsilon \geq 0$, $ \alpha_0 > 0 $, $ \alpha_1 > 0 $ and the hypotheses \eqref{hyp_mk}, \eqref{hyp_ab}, \eqref{hyp_NPrho}, \eqref{estiLinfu01} hold. 
Let us assume that
\begin{align}
& - \dfrac{1}{3a_P^0} \left( 1 + \log \left( \alpha_0 a_P^0 k_P^0 \right) \right) \leqslant \Delta \Psi_0^{pzc} \leqslant \dfrac{1}{a_N^0} \left( 1 + \log \left( \alpha_0 a_N^0 k_N^0 \right) \right), \label{Dpsi0} \\
& - \dfrac{1}{b_N^1} \left( 1 + \log \left( \alpha_1 b_N^1 m_N^1 \right) \right) \leqslant \Delta \Psi_1^{pzc} \leqslant \dfrac{1}{3 b_P^1} \left( 1 + \log \left( \alpha_1 b_P^1 m_P^1 \right) \right). \label{Dpsi1}
\end{align}
Then there exists a solution $ ({\mathbf P}^{k+1}, {\mathbf N}^{k+1}, {\mathbf \Psi}^{k+1})_{0\leq k \leq K-1} $ to the fully implicit scheme $(\mathcal{S})$. Moreover, it satisfies the following stability property:
\begin{align}\label{estiLinfu}
0\leqslant P_i^{k} \leqslant P^{max} \ \ \text{and} \ \ 0 \leqslant N_i^{k} \leqslant N^{max}, \qquad \forall i\in \llbracket 1;I+1\rrbracket, \; \forall k \in \llbracket 0;K\rrbracket.
\end{align}
\end{prop}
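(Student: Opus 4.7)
I argue by induction on $k \in \llbracket 0;K-1\rrbracket$. The base case $k=0$ is immediate from \eqref{CIdisc} and \eqref{estiLinfu01}. For the induction step, suppose $(\mathbf{P}^{k}, \mathbf{N}^{k})$ satisfies \eqref{estiLinfu}; I construct $(\mathbf{P}^{k+1}, \mathbf{N}^{k+1}, \mathbf{\Psi}^{k+1})$ solving \eqref{scheme}--\eqref{CBdisc} and satisfying \eqref{estiLinfu} by combining a topological fixed-point argument (Brouwer's theorem, or Leray--Schauder degree after a standard homotopy) with a priori bounds derived from a discrete maximum principle.

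Fixed-point setup. On the compact convex set $K = [0, P^{max}]^{I+2} \times [0, N^{max}]^{I+2} \times [\Psi_{m}, \Psi_{M}]^{I+2}$, with $\Psi_m, \Psi_M$ to be fixed below, I consider the continuous map $\Phi$ sending $(\bar{\mathbf{P}}, \bar{\mathbf{N}}, \bar{\mathbf{\Psi}})$ to $(\mathbf{P}, \mathbf{N}, \mathbf{\Psi})$ in which first $\mathbf{\Psi}$ is obtained by solving the linear system \eqref{eqpsinum}, \eqref{CBPsi0}--\eqref{CBPsi1} with source $3\bar P_i - \bar N_i + \rho_{hl}$, then $\mathbf{P}, \mathbf{N}$ are obtained by solving the decoupled linear systems \eqref{equnum}, \eqref{CBFluxu0}--\eqref{CBFluxu1} with this $\mathbf{\Psi}$ frozen in the Scharfetter--Gummel fluxes and in the Butler--Volmer coefficients. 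Each sub-problem is well-posed: the Poisson block is symmetric positive definite (Robin terms contribute $\lambda^2/\alpha_i > 0$ to the diagonal), and the drift-diffusion block is an M-matrix by construction of the Scharfetter--Gummel fluxes together with $\beta_u^i > 0$. Fixed points of $\Phi$ are exactly solutions of $(\mathcal{S})$ at step $k+1$.

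The substantive step is $\Phi(K) \subset K$, i.e., the a priori bound \eqref{estiLinfu}. Positivity is the standard Scharfetter--Gummel minimum principle (a negative interior minimum contradicts the scheme via $B, B(-\,\cdot\,)>0$; a negative boundary minimum is excluded by $\gamma_u^i \geq 0$, $\beta_u^i > 0$ in \eqref{betau}--\eqref{gammau}), while the control on $\Psi$ follows from the discrete maximum principle for the Poisson block with bounded source and Robin data. The upper bound rests on the algebraic identity
\[
\mathcal{F}_{u,i+\frac{1}{2}}^{k+1} - \mathcal{F}_{u,i-\frac{1}{2}}^{k+1}
= \frac{B(-y_{i+\frac{1}{2}})}{h_{i+\frac{1}{2}}}(u_i - u_{i+1})
+ \frac{B(y_{i-\frac{1}{2}})}{h_{i-\frac{1}{2}}}(u_i - u_{i-1})
+ \frac{z_u h_i u_i}{\lambda^{2}}(3P_i - N_i + \rho_{hl}),
\]
with $y_{i+\frac{1}{2}} = z_u h_{i+\frac{1}{2}} \mathrm{d}\Psi_{i+\frac{1}{2}}^{k+1}$, obtained by combining the elementary relation $B(y)-B(-y)=-y$ with the discrete Poisson equation \eqref{eqpsinum}. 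Parametrising a potential violation by $\eta_P = \max_i(P_i/P^{max}) - 1$ and $\eta_N = \max_i(N_i/N^{max}) - 1$, one checks from \eqref{hyp_NPrho} that at the $P$-maximum cell $3P_i - N_i + \rho_{hl} \geq 3\eta_P P^{max} - \eta_N N^{max}$, while at the $N$-maximum cell the reverse inequality holds. Whichever of $3\eta_P P^{max}$ and $\eta_N N^{max}$ dominates, evaluating the identity at the corresponding maximum cell gives a nonnegative right-hand side (the first two terms being $\geq 0$ at a maximum, the third having the favourable sign thanks to $z_u$), which contradicts $\mathcal{F}_{u,i+\frac{1}{2}}^{k+1} - \mathcal{F}_{u,i-\frac{1}{2}}^{k+1} = -\varepsilon_u h_i (u_i^{k+1} - u_i^{k})/\Delta t < 0$ (strictly, for $u = P$ since $\varepsilon_P = 1$). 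A boundary maximum is ruled out by direct inspection of the Butler--Volmer conditions \eqref{CBFluxu0}--\eqref{CBFluxu1} together with the Poisson bounds on $\Psi_0, \Psi_{I+1}$; here the threshold hypotheses \eqref{Dpsi0}--\eqref{Dpsi1} on $\Delta\Psi_0^{pzc}, \Delta\Psi_1^{pzc}$ are precisely what is needed.

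Main obstacle. The crux is the simultaneous upper bounds: $P \leq P^{max}$ and $N \leq N^{max}$ cannot be proved separately, since the sign of $3P - N + \rho_{hl}$ in the identity couples the two species through Poisson, and only \eqref{hyp_NPrho} allows one to close the comparison. Moreover, the matching boundary analysis relies in an essential way on the compatibility thresholds \eqref{Dpsi0}--\eqref{Dpsi1}. A further subtlety arises in the case $\varepsilon = 0$, where the $N$-equation degenerates to a steady-state balance and the strict inequality in the scheme is lost; the contradiction must then be propagated from the violating interior cell toward the boundary, where the Butler--Volmer-based argument closes the proof.
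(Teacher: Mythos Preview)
Your fixed-point map $\Phi$ and the maximum-principle argument you give for the invariance $\Phi(K)\subset K$ do not fit together. In $\Phi$ the Poisson source is $3\bar P_i-\bar N_i+\rho_{hl}$ (the \emph{input} densities), so your algebraic identity actually reads
\[
\mathcal{F}_{u,i+\frac12}-\mathcal{F}_{u,i-\frac12}
=\frac{B(-y_{i+\frac12})}{h_{i+\frac12}}(u_i-u_{i+1})+\frac{B(y_{i-\frac12})}{h_{i-\frac12}}(u_i-u_{i-1})
+\frac{z_u h_i u_i}{\lambda^2}\bigl(3\bar P_i-\bar N_i+\rho_{hl}\bigr),
\]
with $(\bar P,\bar N)$ in the last factor, not the output $(P,N)$. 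Your parameters $\eta_P,\eta_N$ measure a violation by the \emph{output}, so the inequality ``at the $P$-maximum cell $3P_i-N_i+\rho_{hl}\ge 3\eta_P P^{max}-\eta_N N^{max}$'' is simply not available. In fact the invariance $\Phi(K)\subset K$ you claim is \emph{false} without a step restriction: the supersolution test $P\equiv P^{max}$ in the $P$-block gives the residual
\[
\frac{h_i}{\Delta t}(P^{max}-P_i^k)+\frac{3h_iP^{max}}{\lambda^2}\Bigl(3(\bar P_i-P^{max})+(N^{max}-\bar N_i)\Bigr),
\]
which for $\bar P_i=0$, $\bar N_i=N^{max}$ is negative as soon as $\Delta t>\lambda^2/(9P^{max})$. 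Hence Brouwer on $K$ does not apply as stated.

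The paper closes this gap by a $\mu$-regularisation of the fixed-point map (already used for $\varepsilon>0$ in \cite{BBC12}): one replaces the $P$-equation by
$\frac{h_i}{\Delta t}\bigl((1+\frac{\mu}{\lambda^2})\hat P_i-\frac{\mu}{\lambda^2}P_i-P_i^k\bigr)+\cdots=0$
and adds $\frac{h_i}{\Delta t}\frac{\mu}{\lambda^2}(\hat N_i-N_i)$ to the $N$-equation (this also makes the $N$-block an M-matrix when $\varepsilon=0$). The extra $\mu$-term is exactly what is needed in the supersolution computation above to absorb the bad coupling term, under the condition $\Delta t\le \mu\min\bigl(1/(9P^{max}),1/N^{max}\bigr)$; since $\mu>0$ is free, one picks it large enough for the given $\Delta t$, and at a fixed point the added terms vanish so one recovers a solution of $(\mathcal S)$. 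Your $\eta_P/\eta_N$ competition argument is, on the other hand, a correct \emph{a priori} bound on solutions of the fully coupled scheme (where $P,N$ feed back into their own Poisson source), and could in principle be turned into an existence proof via a genuine Leray--Schauder homotopy---but that is a different argument from the one you wrote, and the homotopy and the uniform bounds along it would have to be spelled out.
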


Based on the vector of discrete unknowns, we can define some approximate solutions, which are piecewise constant function in space and time, as it is usual for finite volume approximations. 
For a given mesh $\mathcal{T}$ of size $h$ and a given $\Delta t$,  we define,  for $w=N, P$ or $\Psi$,
\begin{align}
& w_h^k=\sum_{i=1}^I w_i^{k} \mathds{1}_{\left(x_{i-1/2},x_{i+1/2}\right)}+w_0^k\mathds{1}_{\{x=0\}}+w_{I+1}^k\mathds{1}_{\{x=1\}}, ~\hbox{for }~k \in\llbracket 0;K\rrbracket , \label{wrecspace} \\
& w_{h,\Delta t}=\sum_{k=0}^{K-1} w_{h}^{k+1} \mathds{1}_{[t^k,t^{k+1})}. \label{wrecspacetime}
\end{align}

%

For a sequence of meshes and time steps $ (\mathcal{T}_m, \Delta t_m)_m $ such that $h_m \to 0 $ and $ \Delta t_m \to 0 $ as $ m \to  + \infty $, we can define a sequence of approximate solutions \break$(P_m,N_m,\Psi_m)_m $ with $w_m=w_{h_m,\Delta t_m}$ for $w=P, N$ or $\Psi$. The main result of the paper is the convergence of such a sequence of approximate solutions to a weak solution of the corrosion model $({\mathcal P})$. It is given in Theorem \ref{THEO}.
\begin{theo}\label{THEO}
Let $\varepsilon > 0$, $ \alpha_0 > 0 $, $ \alpha_1 > 0 $. Assuming (\ref{hyp_mk}), (\ref{hyp_ab}), (\ref{hyp_NPrho}), (\ref{estiLinfu01}), \eqref{Dpsi0} and \eqref{Dpsi1}, there exist $ P, N $ and $ \Psi  \in L^2(0,T;H^1(0,1)) $ such that, up to a subsequence, as $ m \to + \infty $, 
\begin{align*}
& P_m \to P \quad \text{strongly in} \ L^2(0,T;L^2(0,1)), \\
& N_m \to N \quad \text{strongly in} \ L^2(0,T;L^2(0,1)), \\
& \Psi_m \to \Psi \quad \text{strongly in} \ L^2(0,T;L^2(0,1)), \\
\end{align*}
where $ \left( P, N ,\Psi \right) $ is a weak solution of $(\mathcal{P})$ in the sense of Definition~\ref{solfaible}.
\end{theo}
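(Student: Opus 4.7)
The plan is to follow the classical three-step scheme for convergence of finite volume methods for drift-diffusion systems: derive uniform discrete a~priori estimates, use them to extract compactness, then pass to the limit in the scheme's weak formulation to recover Definition~\ref{solfaible}.

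\textbf{A priori estimates.} Proposition~\ref{PROP} already supplies uniform $L^\infty$ bounds on $P_m$ and $N_m$. Injecting these into the discrete Poisson equation \eqref{eqpsinum} with the Robin conditions \eqref{CBPsi0}--\eqref{CBPsi1} yields, by a standard discrete energy estimate on $\Psi$ (multiply \eqref{eqpsinum} by $\Psi_i^{k+1}$ and sum by parts, using that $\alpha_0,\alpha_1>0$ to control the boundary contributions), a uniform bound on $\|\Psi_m\|_{L^\infty(0,T;H^1)}$ and in particular on the traces $\Psi_m(\cdot,0), \Psi_m(\cdot,1)$. For the carriers, I would derive a discrete $L^2(0,T;H^1)$ estimate on $u_m$ ($u=P,N$) by testing \eqref{equnum} against $u_i^{k+1}$, summing over $i,k$, and rewriting the resulting flux term using the key Scharfetter--Gummel identity $B(y)-B(-y)=-y$, $B(y)+B(-y)=y\coth(y/2)$. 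The drift part is absorbed using Young's inequality together with the already obtained bound on $\mathrm{d}\Psi^{k+1}$; the Robin boundary terms produce nonnegative contributions thanks to monotonicity of $r_u^0,r_u^1$ in their first argument and sign considerations on $\beta_u^i,\gamma_u^i$.

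\textbf{Time translates and compactness.} To apply a discrete Aubin--Simon-type lemma, I would establish an $L^2(0,T;H^{-1})$-like bound on the discrete time derivative of $u_m$ by dualizing \eqref{equnum}: for a discrete test function $\varphi$ one writes
\[
\varepsilon_u\sum_{i,k}h_i\frac{u_i^{k+1}-u_i^k}{\Delta t}\varphi_i^{k+1}\Delta t
=-\sum_{i,k}(\mathcal{F}_{u,i+1/2}^{k+1}-\mathcal{F}_{u,i-1/2}^{k+1})\varphi_i^{k+1}\Delta t,
\]
and controls the right-hand side by $\|\varphi\|_{L^2(H^1)}$ using the Scharfetter--Gummel bounds established above together with the boundary contributions from \eqref{CBFluxu0}--\eqref{CBFluxu1}. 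Combined with the discrete $L^2(0,T;H^1)$ estimate, this yields strong relative compactness of $(P_m)_m$ and $(N_m)_m$ in $L^2(0,T;L^2(0,1))$ via a discrete Kolmogorov--Riesz argument. The $H^1$ estimate on $\Psi_m$, together with $-\lambda^2\partial_x^2\Psi_m = 3P_m-N_m+\rho_{hl}$ at the discrete level, then transfers strong $L^2(L^2)$ convergence of $P_m,N_m$ into strong $L^2(H^1)$ convergence of $\Psi_m$ (and uniform convergence of its traces).

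\textbf{Passage to the limit.} Once we have a candidate $(P,N,\Psi)\in L^2(0,T;H^1(0,1))\cap L^\infty$ with strong $L^2(L^2)$ convergence of $P_m,N_m,\Psi_m$ and weak $L^2$ convergence of their spatial gradients, I would plug piecewise constant test functions built from $\varphi\in\mathcal{C}^\infty_c$ into the scheme and pass to the limit term by term. The delicate step is the convection--diffusion flux: using the Taylor expansion $B(\pm z_u h\,\mathrm{d}\Psi)=1\mp\tfrac{1}{2}z_u h\,\mathrm{d}\Psi+O(h^2\mathrm{d}\Psi^2)$ and the consistency identity $B(y)v-B(-y)w=(v-w)-\tfrac{y}{2}(v+w)+O(y^2(v+w))$, one rewrites $\mathcal{F}_{u,i+1/2}^{k+1}$ as the sum of a discrete gradient plus $-\tfrac{1}{2}z_u\,\mathrm{d}\Psi_{i+1/2}^{k+1}(u_i^{k+1}+u_{i+1}^{k+1})$ plus a remainder that vanishes in $L^1$ thanks to the uniform $H^1$ control on $\Psi_m$ and the $L^\infty$ bounds on $u_m$. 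The boundary terms in \eqref{DDufaible} and \eqref{poissonfaible} are recovered from \eqref{CBPsi0}--\eqref{CBFluxu1} using the trace convergences obtained in the compactness step, together with continuity of $\beta_u^i,\gamma_u^i$.

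\textbf{Main obstacle.} The hardest point, which is precisely where this problem departs from the classical drift-diffusion analyses of \cite{ChLiPe03,CP03}, is the simultaneous passage to the limit in the nonlinear Robin boundary terms $\beta_u^i(\cdot)u-\gamma_u^i(\cdot)$: these require strong $L^2(0,T)$ convergence of both the boundary traces $u_m(\cdot,0),u_m(\cdot,1)$ and of $\Psi_m(\cdot,0),\Psi_m(\cdot,1)$, which in turn depends on a careful choice of mesh-dependent trace inequalities. Controlling these boundary traces uniformly in $m$, and showing that they converge to the actual traces of the limits, is where most of the technical work will lie.
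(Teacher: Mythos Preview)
Your plan is essentially the paper's: discrete $L^2(0,T;H^1)$ estimates on $P,N,\Psi$, a dual bound on the discrete time derivative of $u$ to feed into a discrete Aubin--Simon lemma (the paper uses the Gallou\"et--Latch\'e theorem), and then passage to the limit via the exact Scharfetter--Gummel identity $\mathcal{F}_{u,i+1/2}=-\tfrac{z_u}{2}\mathrm{d}\Psi_{i+1/2}(u_i+u_{i+1})+\tfrac{z_u}{2}\mathrm{d}\Psi_{i+1/2}\coth(-\tfrac{z_u h_{i+1/2}\mathrm{d}\Psi_{i+1/2}}{2})(u_{i+1}-u_i)$ rather than a Taylor expansion (the $\coth$ form avoids any smallness assumption on $h\,\mathrm{d}\Psi$).

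Two places where your sketch diverges from the paper are worth flagging. First, in the $L^2(H^1)$ estimate for $u$, the Robin boundary contributions are \emph{not} nonnegative; the paper controls them from below using the $L^\infty$ bound on $\Psi$ together with the structural hypotheses \eqref{Dpsi0}--\eqref{Dpsi1} (via auxiliary functions $\xi_u^0,\xi_u^1$), and the drift term is handled by integrating by parts and invoking the Poisson equation plus \eqref{hyp_NPrho} rather than by Young's inequality---your Young-inequality route does also close, thanks to the $L^\infty$ bound \eqref{estiLinfu} on $u$, but you should not expect a sign. Second, for $\Psi_m$ the paper does \emph{not} claim strong $L^2(H^1)$ convergence; it obtains strong $L^2(L^2)$ convergence by Kolmogorov, the time-translate estimate coming from the elliptic equation combined with the bound $\sum_{k,i}h_i(u_i^{k+1}-u_i^k)^2\le C$ (a by-product of the $u$-energy estimate, valid since $\varepsilon>0$). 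Your ``elliptic transfer'' idea is close in spirit but would need a careful discrete-to-continuous argument to justify strong convergence of the discrete gradients. In particular, the paper does not get the trace convergences from an $H^1$-strong convergence; it proves them separately, for all three unknowns, by the averaging argument of \cite{BrCaHi13} (splitting $\int_0^T|\tilde\gamma w_m(0,t)-w(0,t)|\,\mathrm{d}t$ into three pieces via an auxiliary layer $(0,\varrho)$), which is exactly the technical point you correctly identify as the main obstacle.
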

As it is well known in the finite volume framework (see for instance \cite{EGH00}), the proof of Theorem \ref{THEO} will be based on estimates satisfied by the approximate solutions and on compactness results.  Due to the particular boundary conditions, we also need some additional results on the convergence of traces. They will be obtained following the ideas of \cite{BrCaHi13}. 

The paper is organized as follows. Section 2 is devoted to the proof of Proposition \ref{PROP} and also to the  proof of discrete $L^2(0,T,H^1)$-estimates on the approximate densities and on the approximate potential.  Then, in Section 3,  we establish the compactness of the sequences of  approximate solutions. We also obtain a convergence result for  the traces on the boundaries. In Section 4 we conclude the proof of Theorem \ref{THEO} by passing to the limit in the numerical scheme. Finally, Section 5 is devoted to the presentation of some numerical experiments.


\section{Existence result and discrete $L^2(0,T,H^1)$-estimates} \label{sectionexistence}


\subsection{Proof of Proposition \ref{PROP}} \label{sectionpreuveprop}

The proof of Proposition \ref{PROP} for $\varepsilon >0$ has already been  done in \cite{BBC12}. Then, we only prove the result for $\varepsilon=0$. To this end, we follow the ideas of \cite{BBC12} and  \cite{BCV14}. 

Letting $\mu >0$, we introduce a mapping $\mathcal{T}^k_{\mu} : \R^{I+2} \times \R^{I+2}  \to  \R^{I+2} \times \R^{I+2} $ such that $\mathcal{T}^k_{\mu}(P,N)=(\bhat{P},\bhat{N})$. This mapping is based on a linearization of the scheme ; it is defined in two successive step. First, we compute $\Psi$ as the solution to the linear system 
\[ - \lambda^2 \left( \dpsi{i+}{} - \dpsi{i-}{} \right) = h_i \left( 3P_i - N_i + \rho_{hl} \right),  \quad \forall i\in \llbracket 1;I\rrbracket, \]
with a definition of the numerical fluxes $\mathrm{d} \Psi_{i + \frac{1}{2}}$ analog to \eqref{fluxpsinum} and  boundary conditions similar to \eqref{CBPsi0}-\eqref{CBPsi1}. The matrix of the linear system is obviously invertible and $\Psi$ is uniquely defined.
%

Then, we define $\bhat{P}$ and $\bhat{N}$ as the solution to the linear systems
\[ \dfrac{h_i}{\Delta t} \left( \left( 1 + \dfrac{\mu}{\lambda^2} \right) \bhat{P}_i - \dfrac{\mu}{\lambda^2} P_i - P_i^k \right) + \mathcal{F}_{P,\, i + \frac{1}{2}} -\mathcal{F}_{P,\, i - \frac{1}{2}}  = 0, \qquad \forall i\in \llbracket 1;I\rrbracket,\]
\[ \dfrac{h_i}{\Delta t} \, \dfrac{\mu}{\lambda^2} \left( \bhat{N}_i - N_i \right) + \mathcal{F}_{N,\, i + \frac{1}{2}} -\mathcal{F}_{N,\, i - \frac{1}{2}}  = 0, \qquad \forall i\in \llbracket 1;I\rrbracket, \]
with a definition of the numerical fluxes $\mathcal{F}_{u,\, i + \frac{1}{2}} $ analog to \eqref{fluxunum} and  boundary conditions similar to \eqref{CBFluxu0}-\eqref{CBFluxu1}. As shown for instance in \cite{BBC12}, the matrices defined at this step are M-matrices. Therefore, they are  invertible and, as in \cite{BBC12},  we can deduce that $\mathcal{T}^k_{\mu}$ preserves the set
\begin{align*}
\mathcal{K} = \left\{ \left( \mathbf{P}, \, \mathbf{N}  \right) \in \R^{I+2} \times \R^{I+2}; \quad 0 \leqslant P_i \leqslant P^{max}, \; 0 \leqslant N_i \leqslant N^{max}, \; \forall 0 \leqslant i \leqslant I+1 \right\},
\end{align*}
as long as (\ref{Dpsi0})-(\ref{Dpsi1}) are satisfied and $\Delta t$ verifies:
\begin{equation}\label{cdtmu}
\Delta t \leqslant \mu \min \left( \dfrac{1}{9P^{max}}, \, \dfrac{1}{N^{max}} \right) .
\end{equation}



Finally, $\mathcal{T}^k_{\mu} $ is a continuous mapping from $\R^{I+2} \times \R^{I+2}$ to itself which preserves the set $\mathcal{K}$. Thanks to Brouwer's Theorem, we conclude that  $\mathcal{T}^k_{\mu} $ has a fixed point in $ \mathcal{K} $. This fixed point with the corresponding $\Psi$ defines a solution to $(\mathcal{S})$ with $\varepsilon=0$ at time step $k+1$. Since $\mu$ is an arbitrary constant, we can choose it such that \eqref{cdtmu} is verified and a solution to the scheme $(\mathcal{S})$  satisfies (\ref{estiLinfu}) without any condition on $\Delta t$.


\subsection{Notations and preliminary results}\label{sectionnorm}

In order to prove Theorem \ref{THEO}, we need to define some functional sets and norms and to establish
some properties. This is the goal of this section.

First of all, let us define two sets of functions.
\begin{defi}
Let $ \mathcal{T}$ be a mesh of  size  $h$ and $\Delta t$ a discretization time step. 
We first define $\Htau$ the set of piecewise constant functions in space as
\begin{multline}\label{set1}
\Htau=\left\{w_h: [0,1] \mapsto \R ~|~ \exists (w_i)_{0\leq i\leq I+1}\in \R^{I+2} \mbox{ such that } \right.\\
\left. w_h(x)=\sum_{i=1}^I w_i\mathds{1}_{\left(x_{i-1/2},x_{i+1/2}\right)}(x)+w_0\mathds{1}_{\{x=0\}}(x)+w_{I+1}\mathds{1}_{\{x=1\}} (x)\right\},
\end{multline}
Then, we define the set of piecewise constant functions in space and time as
\begin{multline}\label{set2}
\Htaudelta=\left\{w_{h,\Delta t} : [0,1] \times [0,T] \mapsto \R ~|~ \exists (w_h^{k+1})_{0\leq k\leq K-1}\in (\Htau)^K \mbox{ such that } \right.\\
\left.
w_{h,\Delta t}(x,t)=\sum_{k=0}^{K-1} w_{h}^{k+1}(x) \mathds{1}_{[t^k,t^{k+1})}(t) \right\}.
\end{multline}
\end{defi}

Then, denoting by $\Vert\cdot\Vert_0$ the usual $L^2(0,1)$-norm, we remark that 
$$
\Vert w_h\Vert_0= \left({\si{1}{I} h_i w_i^2}\right)^{1/2} \quad \forall w_h\in \Htau.
$$
Moreover, we define some norms on $\Htau$ and $\Htaudelta$, which are discrete counterparts of $H^1(0,1)$, $H^{-1}(0,1)$ and $L^2(0,T,H^1(0,1))$-norms: 
\begin{align*}
&\normeun{w_h} = \left( \si{0}{I} \dfrac{\left( w_{i+1} - w_i \right)^2}{\h{i+}} + w_0^2 + w_{I+1}^2 \right)^{\frac{1}{2}}\quad \forall w_h\in \Htau, \\
&\normeduale{w_h} = \max \left\{ \int_0^1 w_h v_h \, \dx, \; v_h \in \Htau \ \text{and} \ \normeun{v_h} \leqslant 1 \right\}\quad \forall w_h\in\Htau,\\
&\normeldeuxhun{w_{h,\Delta t}} = \left( \sn{0}{K} \Delta t\normeun{w_h^{k+1}}^2 \right)^{\frac{1}{2}}\quad 
\forall w_{h,\Delta t}\in \Htaudelta.
\end{align*}


As shown in \cite{BC08}, for all $w_h\in\Htau$, we have:
\begin{align} \label{estifonc1}
\left( w_i \right)^2 \leqslant 2 \normeun{w_h}^2\qquad \forall i\in \llbracket 1;I\rrbracket,
\end{align}
and, as a direct consequence, the following discrete Poincar\'e inequalities:
\begin{align} \label{Poincare}
\normezero{w_h} \leqslant \sqrt{2} \normeun{w_h}\quad \forall w_h\in\Htau.
\end{align}

Finally, we end this section with some properties satisfied by the functional spaces. These properties will be crucial in order to apply some compactness results. More precisely, the following lemmas state that the  hypotheses $(H1)$ and $(H2)$ of Lemma 3.1 in \cite{GL12} hold for sequences $(\Htaum)_m$ and $(w_{h_m})_m$ such that for all $m$, $w_{h_m} \in \Htaum$.

\begin{lem}\label{hyp1gl}
Let $ (\Htaum)_m $ be a sequence of finite-dimensional subspaces of $ L^2(0,1) $ defined by \eqref{set1}.
Let $ ( w_{h_m})_{m}$ be a sequence such that $w_{h_m}\in\Htaum$ for all $m$ and satisfying :
\begin{align*}
\exists C>0 \mbox{ such that }\forall m,\ \normeunm{w_{h_m}} \leqslant C.
\end {align*}
 Then, up to a subsequence, $ ( w_{h_m})_{m} $ converges to $ w_h $ in $ L^2(0,1) $ when $m$ tends to $+\infty$.
\end{lem}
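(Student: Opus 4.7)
The plan is to reduce this discrete compactness statement to the classical Rellich--Kondrachov embedding $H^1(0,1) \hookrightarrow L^2(0,1)$ by introducing a continuous piecewise affine reconstruction of $w_{h_m}$. For each $m$, I will define $\widetilde{w}_{h_m} \in H^1(0,1)$ to be the continuous function on $[0,1]$ that is affine on each interval $[x_i, x_{i+1}]$ of the dual mesh ($0 \leq i \leq I_m$) and that satisfies $\widetilde{w}_{h_m}(x_i) = w_i$ at every dual node (including the boundary values $w_0$ and $w_{I_m+1}$ at $x=0,1$). A direct cell-by-cell computation yields
\[
\int_0^1 |\widetilde{w}_{h_m}'(x)|^2 \, \dx = \sum_{i=0}^{I_m} \frac{(w_{i+1}-w_i)^2}{h_{i+1/2}} \leq \normeunm{w_{h_m}}^2,
\]
and the boundary terms $w_0^2 + w_{I_m+1}^2$ entering $\normeunm{\cdot}^2$, combined with the discrete Poincaré inequality \eqref{Poincare}, give $\|\widetilde{w}_{h_m}\|_{L^2(0,1)} \leq C\, \normeunm{w_{h_m}}$. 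Hence $(\widetilde{w}_{h_m})_m$ is uniformly bounded in $H^1(0,1)$.

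By Rellich--Kondrachov, some subsequence satisfies $\widetilde{w}_{h_m} \to w$ strongly in $L^2(0,1)$ for some $w \in L^2(0,1)$. It then remains to compare the original piecewise constant function with its reconstruction. On the half-cell $(x_{i-1/2}, x_i)$, where $w_{h_m} \equiv w_i$, the affine formula yields $\widetilde{w}_{h_m}(x) - w_i = (w_{i-1} - w_i)(x_i - x)/h_{i-1/2}$, and an analogous expression holds on $(x_i, x_{i+1/2})$. Integrating the square of the difference cell by cell, summing over $i$, and applying Cauchy--Schwarz to regroup the mesh factors produces a bound of the form
\[
\|w_{h_m} - \widetilde{w}_{h_m}\|_{L^2(0,1)}^2 \leq C\, h_m\, \normeunm{w_{h_m}}^2,
\]
which tends to zero as $h_m \to 0$. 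Combined with the strong convergence of $(\widetilde{w}_{h_m})_m$, this gives $w_{h_m} \to w$ in $L^2(0,1)$ along the same subsequence, completing the proof.

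The main obstacle lies in the last comparison estimate: on the possibly highly non-uniform (e.g.\ Tchebychev) meshes considered in the paper, the ratios $h_i/h_{i\pm 1/2}$ may vary considerably, so one must carefully keep every geometric factor uniformly dominated by $h_m$. The boundary half-cells $[0,x_1]$ and $[x_{I_m},1]$ need no separate argument: on $[0,x_1]$ (resp.\ $[x_{I_m},1]$) the reconstruction is affine between $w_0$ and $w_1$ (resp.\ between $w_{I_m}$ and $w_{I_m+1}$), and the same terms $(w_{i+1}-w_i)^2/h_{i+1/2}$ already present in the interior sum control these end intervals.
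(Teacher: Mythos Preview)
Your argument is correct but follows a genuinely different route from the paper. The paper dispatches the lemma in one line by invoking Kolmogorov's compactness theorem (Theorem~10.3 in \cite{EGH00}): the uniform bound $\normeunm{w_{h_m}}\le C$ directly controls the $L^2$-norm of space translates of the piecewise constant functions, and compactness in $L^2(0,1)$ follows. Your approach instead lifts each $w_{h_m}$ to a piecewise affine $H^1$ function, appeals to Rellich--Kondrachov, and then closes the gap with the comparison estimate $\|w_{h_m}-\widetilde w_{h_m}\|_{L^2}^2\le C\,h_m\,\normeunm{w_{h_m}}^2$. Both are standard; the Kolmogorov route is shorter and avoids any auxiliary reconstruction, while your route has the advantage of making the $H^1$ regularity of the limit immediate (a fact the paper uses later anyway).

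One remark on the ``main obstacle'' you flag: on the meshes of this paper the concern about the ratios $h_i/h_{i\pm 1/2}$ is in fact harmless. Because the cell centers are defined by $x_i=(x_{i-1/2}+x_{i+1/2})/2$, one always has $h_{i+1/2}=(h_i+h_{i+1})/2\ge h_i/2$ (and $h_{1/2}=h_1/2$, $h_{I+1/2}=h_I/2$ at the boundary), so $h_i/h_{i\pm 1/2}\le 2$ uniformly, regardless of how non-uniform (Tchebychev or otherwise) the primal mesh is. Your comparison estimate therefore goes through with an absolute constant and even yields the sharper bound $\|w_{h_m}-\widetilde w_{h_m}\|_{L^2}^2\le C\,h_m^2\,\normeunm{w_{h_m}}^2$.
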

The proof of this lemma is a consequence of a Kolmogorov's compactness Theorem (see for instance Theorem 10.3 in \cite{EGH00}).

\begin{lem}\label{hyp2gl}
Let $ (\Htaum)_m $ be a sequence of finite-dimensional subspaces of $ L^2(0,1) $ defined by \eqref{set1}.
Let $ ( w_{h_m})_{m}$ be a sequence such that $w_{h_m}\in\Htaum$ for all $m$. If $(\fhm{w} )_{m}$ converges to $ w $ in $ L^2(0,1) $ and $ \Vert\fhm{w}\Vert_{-1,2,{\mathcal T}_m} $ converges to $ 0 $, then $ w =0 $.
\end{lem}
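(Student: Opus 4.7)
The plan is to show $\int_0^1 w\,\varphi\,\dx = 0$ for every $\varphi \in C_c^\infty(0,1)$; since such $\varphi$ are dense in $L^2(0,1)$, this will force $w = 0$ almost everywhere. The strategy is to test the hypothesis $\normedualem{w_{h_m}}\to 0$ against a well-chosen discretization of $\varphi$.

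Fix $\varphi \in C_c^\infty(0,1)$. For each $m$, define the natural projection $\varphi_{h_m} \in \Htaum$ by setting its coefficients $\varphi_i = \varphi(x_i)$ for $i\in \llbracket 1;I_m\rrbracket$, together with $\varphi_0 = 0$ and $\varphi_{I_m+1} = 0$. For $m$ large enough, $\mathrm{supp}(\varphi)$ is strictly interior, so these boundary values are consistent with $\varphi(0)=\varphi(1)=0$. The three facts I will need are: (i) $\varphi_{h_m} \to \varphi$ in $L^2(0,1)$, which is standard for a piecewise constant projection of a smooth function; (ii) $\normeunm{\varphi_{h_m}}$ is bounded uniformly in $m$; (iii) $\int_0^1 w_{h_m}\varphi_{h_m}\,\dx \to \int_0^1 w\,\varphi\,\dx$.

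For (ii), the mean-value theorem yields $|\varphi_{i+1}-\varphi_i| \leq \|\varphi'\|_\infty\, h_{i+1/2}$, hence
\[
\sum_{i=0}^{I_m} \frac{(\varphi_{i+1}-\varphi_i)^2}{h_{i+1/2}} \leq \|\varphi'\|_\infty^2 \sum_{i=0}^{I_m} h_{i+1/2} = \|\varphi'\|_\infty^2,
\]
and the boundary contributions vanish, giving $\normeunm{\varphi_{h_m}} \leq \|\varphi'\|_\infty$ for $m$ large. For (iii), write
\[
\int_0^1 w_{h_m}\varphi_{h_m}\,\dx - \int_0^1 w\,\varphi\,\dx = \int_0^1 (w_{h_m}-w)\varphi_{h_m}\,\dx + \int_0^1 w(\varphi_{h_m}-\varphi)\,\dx,
\]
and bound each term by Cauchy--Schwarz, using the $L^2$ convergences $w_{h_m}\to w$ and $\varphi_{h_m}\to\varphi$ together with the boundedness of $\|\varphi_{h_m}\|_0$ (for instance via the Poincar\'e inequality~\eqref{Poincare} applied to (ii)).

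Finally, by the very definition of $\normedualem{\cdot}$,
\[
\left|\int_0^1 w_{h_m}\varphi_{h_m}\,\dx\right| \leq \normeunm{\varphi_{h_m}}\,\normedualem{w_{h_m}} \leq \|\varphi'\|_\infty\,\normedualem{w_{h_m}} \xrightarrow[m\to\infty]{} 0.
\]
Combined with (iii) this gives $\int_0^1 w\,\varphi\,\dx = 0$ for every $\varphi \in C_c^\infty(0,1)$, hence $w=0$. The only mildly delicate point is the uniform bound (ii); once the boundary values of $\varphi_{h_m}$ are pinned to zero (which requires only that $m$ be large enough that $\mathrm{supp}(\varphi)$ is separated from $\{0,1\}$ by a distance exceeding $h_m$), the estimate reduces to a discrete energy bound, which the smoothness of $\varphi$ handles trivially.
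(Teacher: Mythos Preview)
Your proof is correct and follows essentially the same approach as the paper: both test against the discretization $\varphi_{h_m}$ of a smooth compactly supported $\varphi$, use the duality inequality $\bigl|\int_0^1 w_{h_m}\varphi_{h_m}\,\dx\bigr|\leq \normedualem{w_{h_m}}\normeunm{\varphi_{h_m}}$ together with a uniform bound on $\normeunm{\varphi_{h_m}}$, and then pass to the limit. The only cosmetic difference is the splitting: the paper writes $\int_0^1 w_{h_m}\varphi = \int_0^1 w_{h_m}\varphi_{h_m} + \int_0^1 w_{h_m}(\varphi-\varphi_{h_m})$ and bounds the second term by $\Vert w_{h_m}\Vert_0\, C_\varphi h_m$, whereas you split $\int_0^1 w_{h_m}\varphi_{h_m} - \int_0^1 w\varphi$ instead; both routes are equivalent.
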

\begin{proof}
We will obtain that $w=0$, as a consequence of:
$$
\forall \varphi \in \mathcal{C}_c^{\infty}([0,1]), \quad \int_0^1 w \varphi \ \dx = 0.
$$
Thus, letting  $ \varphi\in \mathcal{C}_c^{\infty}([0, 1]) $, we set $\varphi_i=\varphi(x_i)$ for all $i\in \llbracket 0;I+1\rrbracket$ and we define the associate $\varphi_{h_m}\in \Htaum$. 
Since $ \fhm{w} $ and $ \fhm{\varphi} \in \Htaum $:
\begin{equation}
\label{set3}
\left| \int_0^1 \fhm{w} \fhm{\varphi} \, \dx \right| \leqslant \normedualem{\fhm{w}} \normeunm{\fhm{\varphi}}.
\end{equation}
But, thanks to the regularity of $\varphi$, 
\begin{align*}
\left| \int_0^1 \fhm{w} \varphi \, \dx \right| & \leqslant \left| \int_0^1 \fhm{w} \fhm{\varphi} \, \dx \right| + \left| \int_0^1 \fhm{w}(\varphi - \fhm{\varphi}) \, \dx \right| \\
& \leqslant \normedualem{\fhm{w}} \normeunm{\fhm{\varphi}} + \Vert \fhm{w}\Vert_0 C_{\varphi} h_m.
\end{align*}
Since $ \Vert \fhm{w} \Vert_0 $ is bounded independently of $ h_m $, for all $ \varphi \in \mathcal{C}_c^{\infty}([0,1]) $, we have
\begin{align*}
\int_0^1 w \varphi \, \dx = \lim_{m \to + \infty} \int_0^1 \fhm{w} \varphi \, \dx = 0,
\end{align*}
and then $ w = 0 $.
\end{proof}


\subsection{Discrete $L^2(0,T,H^1(0,1))$ estimates on $P$, $N$ and $\Psi$} \label{sectionestimates}

In order to apply compactness results, we need some estimates on $P_{h,\Delta t}, N_{h, \Delta t}$ and $\Psi_{h, \Delta t}$. Let us begin with the estimate on $\Psi_{h, \Delta t}$.
\begin{prop}\label{estimationPN}
Under the assumptions of Proposition \ref{PROP}, there exists a constant $ C $ depending only on the data and independent of $ \Delta t $ and $ h $, such that:
\begin{equation}
\normeun{\Psi_h^{k+1}}^2 \leqslant C, \quad \forall \, k \geq 0 \label{bornehunpsi},
\end{equation}
\begin{equation}
\mbox{ and }\normeldeuxhun{\Psi_{h,\Delta t}}^2 \leqslant CT \label{borneldeuxpsi}.
\end{equation}
\end{prop}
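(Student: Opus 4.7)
The plan is to run a standard discrete energy estimate: multiply the discrete Poisson equation \eqref{eqpsinum} by $\Psi_i^{k+1}$, sum over $i\in\llbracket 1;I\rrbracket$, and carry out a discrete integration by parts. Writing $A_i=\mathrm{d}\Psi_{i+\frac12}^{k+1}$ and noting that $\Psi_{i+1}^{k+1}-\Psi_i^{k+1}=h_{i+\frac12}A_i$, Abel summation yields
\[
\sum_{i=1}^{I}(A_i-A_{i-1})\Psi_i^{k+1}
= A_I\Psi_{I+1}^{k+1}-A_0\Psi_0^{k+1}-\sum_{i=0}^{I}h_{i+\frac12}A_i^{2}.
\]
After multiplying by $-\lambda^2$ and moving the boundary terms, the bulk term contributes exactly the interior part of $\normeun{\Psi_h^{k+1}}^2$ with prefactor $\lambda^2$.

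The key step is to turn the boundary flux $\times$ boundary value contributions into coercive quantities by inserting the Robin conditions \eqref{CBPsi0}--\eqref{CBPsi1}: from $\alpha_0\,\mathrm{d}\Psi_{\frac12}^{k+1}=\Psi_0^{k+1}-\Delta\Psi_0^{pzc}$ and $\alpha_1\,\mathrm{d}\Psi_{I+\frac12}^{k+1}=V-\Delta\Psi_1^{pzc}-\Psi_{I+1}^{k+1}$, I obtain
\[
-A_0\Psi_0^{k+1}=-\frac{(\Psi_0^{k+1})^2}{\alpha_0}+\frac{\Delta\Psi_0^{pzc}}{\alpha_0}\Psi_0^{k+1},\quad
A_I\Psi_{I+1}^{k+1}=-\frac{(\Psi_{I+1}^{k+1})^2}{\alpha_1}+\frac{V-\Delta\Psi_1^{pzc}}{\alpha_1}\Psi_{I+1}^{k+1}.
\]
Together with the interior term, this reconstitutes $\lambda^{2}\min(1,1/\alpha_0,1/\alpha_1)\,\normeun{\Psi_h^{k+1}}^{2}$ (up to a constant factor depending on $\alpha_0,\alpha_1$) on the left-hand side.

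On the right-hand side the volume contribution is $\sum_{i=1}^{I}h_i\bigl(3P_i^{k+1}-N_i^{k+1}+\rho_{hl}\bigr)\Psi_i^{k+1}$. The $L^\infty$ stability estimate \eqref{estiLinfu} from Proposition \ref{PROP} gives a uniform bound $|3P_i^{k+1}-N_i^{k+1}+\rho_{hl}|\leq C$, so Cauchy--Schwarz and the discrete Poincaré inequality \eqref{Poincare} bound this term by $C_1\normeun{\Psi_h^{k+1}}$. The remaining linear boundary contributions are bounded by $C_2\bigl(|\Psi_0^{k+1}|+|\Psi_{I+1}^{k+1}|\bigr)\leq C_3\normeun{\Psi_h^{k+1}}$ using \eqref{estifonc1}. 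A standard Young's inequality then absorbs the linear-in-$\normeun{\Psi_h^{k+1}}$ terms into the left-hand side, yielding \eqref{bornehunpsi}. Estimate \eqref{borneldeuxpsi} follows immediately by multiplying \eqref{bornehunpsi} by $\Delta t$, summing over $k=0,\ldots,K-1$, and using $K\Delta t=T$.

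The only non-mechanical point is to check that the boundary treatment produces genuinely coercive contributions $-(\Psi_0^{k+1})^2/\alpha_0$ and $-(\Psi_{I+1}^{k+1})^2/\alpha_1$ with the right sign, so that together with the squared-gradient sum they control the full $\normeun{\cdot}$-norm; this is where the sign convention in the Robin conditions and the positivity $\alpha_0,\alpha_1>0$ are essential. Once this is in hand, the bound on the right-hand side is uninteresting because the $L^\infty$ estimate \eqref{estiLinfu} reduces it to linear terms in $\normeun{\Psi_h^{k+1}}$.
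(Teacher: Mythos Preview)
Your argument is correct and is precisely the standard discrete energy estimate the paper has in mind: the paper actually leaves this proof to the reader, only noting that it follows the line of the corresponding continuous result in \cite{CV13} and mainly uses the $L^\infty$ bound \eqref{estiLinfu} and the discrete Poincar\'e inequality \eqref{Poincare}. Your treatment of the Robin boundary terms, which yields the coercive contributions $\lambda^2(\Psi_0^{k+1})^2/\alpha_0$ and $\lambda^2(\Psi_{I+1}^{k+1})^2/\alpha_1$ and hence the full $\normeun{\cdot}$-norm with constant $\lambda^2\min(1,1/\alpha_0,1/\alpha_1)$, is exactly the step that is reused later in the paper (see the proof of Lemma~\ref{lemtranspsi}).
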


The proof is left to the reader. It follows the line of the proof of Proposition 2 in \cite{CV13}. It mainly uses \eqref{estiLinfu} and \eqref{Poincare}.

\begin{remark}\label{estilinfpsi}
Thanks to estimates (\ref{estifonc1}) and (\ref{bornehunpsi}), we have $ \normelinftyzeroun{\Psi_h^{k+1}} \leqslant C $ for all $ k \geq 0 $.
\end{remark}
Let us now state the same result on $P_{h,\Delta t}$ and $N_{h,\Delta t}$.
\begin{prop}\label{prop22}
Under the assumptions of Proposition \ref{PROP}, there exists a constant $ C $ depending only on the data and independent of $ \Delta t $ and $ h $, such that:
\begin{align}
\normeldeuxhun{u_{h,\Delta t}}^2 \leqslant C, \quad \mbox{ for } u=P, N \label{estimationldeuxu}
\end{align}
\end{prop}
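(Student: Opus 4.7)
The plan is to test the discrete conservation equation \eqref{equnum} with $u_i^{k+1}$ itself, sum over $i\in\llbracket 1,I\rrbracket$ and $k\in\llbracket 0,K-1\rrbracket$ with weight $\Delta t$, and perform a discrete Abel summation so that $\normeun{u_h^{k+1}}^2$ appears on the left-hand side. The time-derivative contribution is handled by the classical convexity inequality $a(a-b)\geq \tfrac12(a^2-b^2)$: summation in $k$ is telescopic and leaves $\tfrac{\varepsilon_u}{2}(\normezero{u_h^K}^2 - \normezero{u_h^0}^2)$, whose initial part is bounded thanks to \eqref{estiLinfu01} and whose final part is nonnegative.

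The flux term, after Abel summation, yields a boundary contribution $u_{I+1}^{k+1}\flux{I+}{k+1} - u_0^{k+1}\flux{}{k+1}$ plus an interior sum $-\sum_{i=0}^I (u_{i+1}^{k+1}-u_i^{k+1})\flux{i+}{k+1}$. Substituting the Robin boundary conditions \eqref{CBFluxu0}--\eqref{CBFluxu1}, the boundary part rewrites as
\[
\beta_u^0(\Psi_0^{k+1})(u_0^{k+1})^2 + \beta_u^1(V-\Psi_{I+1}^{k+1})(u_{I+1}^{k+1})^2 - \gamma_u^0(\Psi_0^{k+1})u_0^{k+1} - \gamma_u^1(V-\Psi_{I+1}^{k+1})u_{I+1}^{k+1}.
\]
By Remark \ref{estilinfpsi}, the arguments of $\beta_u^i$ and $\gamma_u^i$ stay in a fixed bounded interval, so $\beta_u^i$ are bounded below by a strictly positive constant, which furnishes exactly the trace contributions $(u_0^{k+1})^2$ and $(u_{I+1}^{k+1})^2$ needed in $\normeun{u_h^{k+1}}^2$; the linear $\gamma_u^i$ terms are an $O(1)$ perturbation by \eqref{estiLinfu}.

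For the interior sum I would use the Scharfetter--Gummel identity $B(x)-B(-x)=-x$ to rewrite the numerical flux as
\[
\flux{i+}{k+1} = -B\bigl(z_u \h{i+}\dpsi{i+}{k+1}\bigr)\frac{u_{i+1}^{k+1}-u_i^{k+1}}{\h{i+}} - z_u \dpsi{i+}{k+1}\, u_{i+1}^{k+1},
\]
which is the discrete counterpart of $J_u = -\partial_x u - z_u u\partial_x \Psi$. Multiplying by $-(u_{i+1}^{k+1}-u_i^{k+1})$ produces a coercive piece $B(\cdot)(u_{i+1}^{k+1}-u_i^{k+1})^2/\h{i+}$ and a drift cross-term to be absorbed. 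The crucial point is that $|\h{i+}\dpsi{i+}{k+1}| = |\Psi_{i+1}^{k+1} - \Psi_i^{k+1}|$ is uniformly bounded thanks to $\normelinftyzeroun{\Psi_h^{k+1}}\le C$ (Remark \ref{estilinfpsi}); since $B$ is positive and decreasing, the Bernoulli factor stays above a strictly positive constant independent of $h$ and $\Delta t$. Young's inequality on the cross term then absorbs a small fraction of $(u_{i+1}^{k+1}-u_i^{k+1})^2/\h{i+}$ into the coercive piece and, using \eqref{estiLinfu}, bounds the remainder by $C\,\h{i+}(\dpsi{i+}{k+1})^2$.

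Summation in $i$ and $k$ with weight $\Delta t$ turns this remainder into a quantity controlled by $C\,\normeldeuxhun{\Psi_{h,\Delta t}}^2$, which is already finite by \eqref{borneldeuxpsi}, while the coercive part supplies $c\,\normeldeuxhun{u_{h,\Delta t}}^2$. Assembling the three estimates yields \eqref{estimationldeuxu}. The main technical step is the Scharfetter--Gummel flux splitting together with the uniform positive lower bound on the Bernoulli factor, which is the discrete analogue of the continuous energy identity and would degenerate without the $L^{\infty}$ control of $\Psi_h^{k+1}$; the Robin boundary terms, which are the main novelty compared to the classical semiconductor setting, are handled by the structural positivity of $\beta_u^i$.
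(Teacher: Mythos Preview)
Your argument is correct but proceeds along a genuinely different line from the paper. The paper uses the symmetric decomposition \eqref{decompoflux} of the Scharfetter--Gummel flux, so that the diffusive part carries the factor $\tfrac{x}{2}\coth(\tfrac{x}{2})\geq 1$ and no $L^\infty$ control of $\Psi$ is needed for coercivity; the resulting drift term $B_2$ is then integrated by parts and, via the discrete Poisson equation \eqref{eqpsinum} and the sign condition $z_Pz_N(u'_i-u'^{\max})u_i^2\geq 0$, is bounded below rather than absorbed by Young. The drift boundary remainder $B_4$ is then combined with the Robin term $B_3$, and the pair is controlled through the auxiliary functions $\xi_u^0,\xi_u^1$, whose nonpositivity relies precisely on the structural assumptions \eqref{Dpsi0}--\eqref{Dpsi1}. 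In contrast, you use the asymmetric splitting $B(x)-B(-x)=-x$, bound the Bernoulli factor from below via Remark~\ref{estilinfpsi}, and kill the cross term by Young and the $L^\infty$ bound \eqref{estiLinfu}; your boundary treatment is the direct one, using only that $\beta_u^i$ is uniformly positive on the range of $\Psi$. Your route is more elementary and leans more heavily on the a~priori $L^\infty$ bounds already available from Proposition~\ref{PROP}, while the paper's route exploits more of the algebraic structure (Poisson coupling, the $\xi_u^i$ inequalities). One practical point: the paper keeps the full identity $a(a-b)=\tfrac12(a-b)^2+\tfrac12 a^2-\tfrac12 b^2$ in the time term, which yields the additional bound \eqref{esttempsu} on $\sum_{k}\sum_i h_i(u_i^{k+1}-u_i^k)^2$ used later for the compactness of $\Psi$ (Lemma~\ref{lemtranspsi}); your telescoping treatment discards this, so if you adopt your proof you should retain that extra term.
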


\begin{proof}
The proof is based on a classical method, already applied in \cite{EGH00}. However, it requires to pay a special attention on the boundary conditions which induce a new difficulty. 

Multiplying (\ref{equnum}) with $ \Delta t u_i^{k+1} $ and summing over $ i $ and $ k $, we obtain $A+B=0$ with 
$$
A= \sn{0}{K} \si{1}{I} \varepsilon_u h_i u_i^{k+1} \left( u_i^{k+1} - u_i^k \right) \mbox{ and } B= \sn{0}{K} \si{1}{I} \Delta t u_i^{k+1} \left( \flux{i+}{k+1} - \flux{i-}{k+1} \right).
$$
It is easy to see that:
\begin{align}
A& \geqslant \sn{0}{K} \si{1}{I} \dfrac{\varepsilon_u h_i}{2} \left( u_i^{k+1} - u_i^k \right)^2 - \si{1}{I} \dfrac{\varepsilon_u h_i}{2} \left( u_i^0 \right)^2 \label{estimationa}.
\end{align}
Moreover, applying a discrete integration by parts to $B$ and using the following decomposition of the numerical fluxes given in \cite{Bes12}:
\begin{multline}\label{decompoflux}
\flux{i+}{k+1}=-z_u \dpsi{i+}{k+1} \dfrac{u_i^{k+1} + u_{i+1}^{k+1}}{2}\\
 + \dfrac{z_u \dpsi{i+}{k+1}}{2} \coth \left( \dfrac{-z_u \h{i+} \dpsi{i+}{k+1}}{2} \right) \left( u_{i+1}^{k+1} -u_i^{k+1} \right),
\end{multline}
we can rewrite $B$ as $B=B_1+B_2+B_3$ with 
\begin{align*}
B_1 & = - \sn{0}{K} \si{0}{I} \dfrac{\Delta t z_u}{2} \dpsi{i+}{k+1} \coth \left( \dfrac{-z_u \h{i+} \dpsi{i+}{k+1}}{2} \right) \left( u_{i+1}^{k+1} -u_i^{k+1} \right)^2, \\
B_2 & = \sn{0}{K} \si{0}{I} \dfrac{\Delta t z_u}{2} \dpsi{i+}{k+1} \left( \left( u_{i+1}^{k+1} \right)^2 - \left( u_i^{k+1} \right)^2 \right), \\
B_3& = \sn{0}{K} \Delta t \left( u_{I+1}^{k+1} \flux{I+}{k+1} - u_0^{k+1} \flux{}{k+1} \right).
\end{align*}

As $x\coth(x)\geq 1$ for all $x\in\R$, we have, as in \cite{Bes12},
\begin{align}\label{estimationb1}
B_1 \geqslant \sn{0}{K} \si{0}{I} \dfrac{\Delta t }{\h{i+}} \left( u_{i+1}^{k+1} - u_i^{k+1} \right)^2.
\end{align}

Applying a discrete integration by parts and using the scheme (\ref{scheme}), we get:
\begin{align*}
B_2 = & \sn{0}{K} \si{1}{I} \dfrac{\Delta t z_u h_i}{2} \dfrac{z_P \left( P_i^{k+1} - P^{max} \right) + z_N \left( N_i^{k+1} -N^{max} \right)}{\lambda^2} \left( u_i^{k+1} \right)^2 \\
& + \sn{0}{K} \dfrac{\Delta t z_u}{2} \left( \dpsi{I+}{k+1} \left( u_{I+1}^{k+1} \right)^2 - \dpsi{}{k+1} \left( u_0^{k+1} \right)^2 \right).
\end{align*}
Since $z_P z_N \left( u_i^{k+1} -u^{max} \right) \left( u_i^{k+1} \right)^2 \geqslant 0$ for $u=N, P$, we have:
\begin{multline*}
\sn{0}{K} \si{1}{I} \dfrac{\Delta t z_u h_i}{2}  \dfrac{z_P \left( P_i^{k+1} - P^{max} \right) + z_N \left( N_i^{k+1} -N^{max} \right)}{\lambda^2} \left( u_i^{k+1} \right)^2 \\
\geqslant \sn{0}{K} \si{1}{I} \dfrac{\Delta t z_u^2 h_i}{2\lambda^2} \left( u_i^{k+1} - u^{max} \right) \left( u_i^{k+1} \right)^2,
\end{multline*}
which yields
\begin{align}\label{estimationb2}
B_2 \geqslant & \sn{0}{K} \si{1}{I} \dfrac{\Delta t z_u^2 h_i}{2\lambda^2} \left( u_i^{k+1} - u^{max} \right) \left( u_i^{k+1} \right)^2  + B_4,
\end{align}
with 
$$
B_4=\sn{0}{K} \dfrac{\Delta t z_u}{2} \left( \dpsi{I+}{k+1} \left( u_{I+1}^{k+1} \right)^2 - \dpsi{}{k+1} \left( u_0^{k+1} \right)^2 \right).
$$
%
Using the boundary conditions \eqref{CBdisc}, we may now rewrite $B_3+B_4$ as 
$$
B_3+B_4= - \sn{0}{K} \Delta t ((f_u^0)^{k+1} + (f_u^1)^{k+1}),
$$
with 
\begin{align*}
(f_u^0)^{k+1} & = \left( u_0^{k+1} \right)^2 \left[ -\beta_u^0 \left( \psi_0^{k+1} \right) + \dfrac{z_u}{2} \, \dfrac{\psi_0^{k+1} - \Delta \psi_0^{pzc}}{\alpha_0} \right] + u_0^{k+1} \gamma_u^0 \left( \psi_0^{k+1} \right), \\
(f_u^1)^{k+1} & = \left( u_{I+1}^{k+1} \right)^2 \left[ -\beta_u^1 \left( V - \psi_{I+1}^{k+1} \right) - \dfrac{z_u}{2} \, \dfrac{ V - \psi_{I+1}^{k+1} - \Delta \psi_1^{pzc}}{\alpha_1} \right] + u_{I+1}^{k+1} \gamma_u^1 \left(V - \psi_{I+1}^{k+1} \right).
\end{align*}
%
%
It remains to find an upper bound of $ (f_u^0)^{k+1} + (f_u^1)^{k+1} $.
To this end, we proceed as in \cite{BBC12} and \cite{CV13}. We introduce:
\begin{align*}
\xi_u^0 (x) & = \gamma_u^0 (x) - u^{max} \beta_u^0 (x) + u^{max} \dfrac{z_u}{\alpha_0}  \left( x - \Delta \psi_0^{pzc} \right), \qquad \forall x \in \R, \\
\xi_u^1 (x) & = \gamma_u^1 (x) - u^{max} \beta_u^1 (x) - u^{max} \dfrac{z_u}{\alpha_1}  \left( x - \Delta \psi_1^{pzc} \right), \qquad \forall x \in \R,
\end{align*}
which are nonpositive functions under the hypotheses \eqref{Dpsi0}-\eqref{Dpsi1} (see \cite{BBC12}).
As
\begin{align*}
(f_u^0)^{k+1} & = \dfrac{\left( u_0^{k+1} \right)^2}{2 u^{max}} \left[ \xi_u^0 \left( \psi_0^{k+1} \right) - u^{max} \beta_u^0 \left( \psi_0^{k+1} \right) - \gamma_u^0 \left( \psi_0^{k+1} \right) \right] + \gamma_u^0 \left( \psi_0^{k+1} \right) u_0^{k+1},
\end{align*}
we clearly have: $(f_u^0)^{k+1}\leq  \gamma_u^0 \left( \psi_0^{k+1} \right) u_0^{k+1}$. 
 Rewriting $(f_u^1)^{k+1}$ with the help of $\xi_u^1$, we similarly prove: $(f_u^1)^{k+1} \leq \gamma_u^1 \left( V -  \psi_{I+1}^{k+1} \right) u_{I+1}^{k+1}$.
%
Then, using the estimates (\ref{estiLinfu}) and (\ref{bornehunpsi}) and the continuity of the functions $ \gamma_u^0 $ and $ \gamma_u^1 $, we have 
\begin{align}\label{estimationb3b4}
B_3+B_4\geq -C.
\end{align}
 From \eqref{estimationa}, \eqref{estimationb1}, \eqref{estimationb2} and \eqref{estimationb3b4}, we deduce that 
\begin{align} \label{estimationu}
\sn{0}{K} \si{0}{I} \Delta t \dfrac{\left( u_{i+1}^{k+1} - u_i^{k+1} \right)^2}{\h{i+}} + \dfrac{\varepsilon_u}{2} \sn{0}{K} \si{1}{I} h_i \left( u_i^{k+1} - u_i^k \right)^2 \leqslant C,
\end{align}
with $ C $ depending only on $ P^{max} $, $ N^{max} $, $ \alpha_0 $, $ \alpha_1 $, $ V $, $ \Dpsi{0} $, $ \Dpsi{1} $, $ \lambda $, $ \left( \beta_u^{i}, \gamma_u^{i} \right)_{i=0, \, 1} $ and $ T $. This ends the proof of Proposition \ref{prop22}
\end{proof}

\begin{remark}\label{estitempsu}
Note that a direct consequence of (\ref{estimationu}) is
\begin{align}\label{esttempsu}
\varepsilon_u \sn{0}{K} \si{1}{I} h_i \left( u_i^{k+1} - u_i^k \right)^2 \leqslant C,
\end{align}
with $C$ a constant independent of $h$ and $\Delta t$. This inequality will be used in Section~\ref{compacitepsi}.
\end{remark}


\section{Compactness results and passage to the limit} \label{sectioncompacite}

In this section, we prove the Theorem \ref{THEO}. Firstly, we establish the convergences of $ (P_m)_m $ and $ (N_m)_m $ using a Gallou\"et-Latch\'e compactness Theorem (Theorem 3.4 in \cite{GL12}), which is  a discrete counterpart of the Aubin-Simon Lemma. Secondly, we prove the convergence of $ (\Psi_m)_m $ using a classical Kolmogorov Theorem. Then, we show the convergence of the traces on the boundaries following the ideas of \cite{BrCaHi13}. Finally, passing to the limit in the scheme, we  prove that $(P_m, N_m, \Psi_m)_m$ tends to a solution of $(\mathcal{P})$ in the sense of Definition \ref{solfaible}.


\subsection{Compactness of $ (P_m)_m $ and $ (N_m)_m $}

The proof of compactness being analogous for $(P_m)_m$ and $(N_m)_m$, in all this section we use the notation $(u_m)_m$ where $u$ can be replaced by $P$ or $N$. To prove the compactness of the sequence $(u_m)_m$, we will use Theorem 3.4 in \cite{GL12}. 
Therefore, for any function $ w_{h,\Delta t} \in \Htaudelta $, we define a discrete time derivative $\partial_{t,\mathcal{T}} w_{h,\Delta t}$
and a discrete space derivative $\partial_{x,\mathcal{T}} w_{h, \Delta t}$. There are piecewise constant function in space and time, defined by:
\begin{gather*}
\partial_{t,\mathcal{T}} w_{h,\Delta t}(x,t)=\partial_{t,\mathcal{T}}^k w_{h,\Delta t}= \dfrac{1}{\Delta t }(w_h^{k+1}- w_h^k)~~\hbox{on}~[t_k,t_{k+1}),\\
\partial_{x,\mathcal{T}} w_{h, \Delta t}(x,t)= \partial_{x,\mathcal{T}}^i w_{h, \Delta t}= \dfrac{w_{i+1}^{k+1}-w_i^{k+1}}{h_{i+1/2} }, ~~\hbox{on}~ (x_i,x_{i+1})\times (t_k,t_{k+1}).
\end{gather*}


Let us first establish an estimate on the discrete time derivative needed for the convergence proof. 
\begin{prop}\label{propestimationderivePN}
Under the assumptions of Theorem \ref{THEO}, there exists a constant $ C $ depending only on the data such that:
\begin{equation}\label{estu:temp}
\sum_{k=0}^{K_m-1} \Delta t_m\normedualem{\partial_{t,\mathcal{T}_m}^k u_m }^2 \;  \leqslant C.
\end{equation}
\end{prop}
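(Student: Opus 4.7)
The plan is to test the discrete evolution equation~\eqref{equnum} against an arbitrary $v_h\in\Htaum$ with $\normeunm{v_h}\leq 1$ and to bound all resulting terms using the uniform $L^{\infty}$ bounds on $(u_m,\Psi_m)$ together with the discrete $L^2(0,T;H^1)$ estimates of Propositions~\ref{estimationPN} and~\ref{prop22}. First, writing
\begin{equation*}
\int_0^1 \partial_{t,\mathcal{T}_m}^k u_m\,v_h\,\dx = \si{1}{I} h_i\,\frac{u_i^{k+1}-u_i^k}{\Delta t_m}\,v_i,
\end{equation*}
using~\eqref{equnum} to rewrite $\varepsilon_u h_i(u_i^{k+1}-u_i^k)/\Delta t_m$ as $\flux{i-}{k+1}-\flux{i+}{k+1}$, performing a discrete summation by parts and inserting the Robin fluxes~\eqref{CBFluxu0}--\eqref{CBFluxu1}, I obtain
\begin{equation*}
\varepsilon_u\int_0^1\partial_{t,\mathcal{T}_m}^k u_m\,v_h\,\dx = \si{0}{I}\flux{i+}{k+1}(v_{i+1}-v_i) - R_0^{k+1} v_0 - R_1^{k+1} v_{I+1},
\end{equation*}
with $R_0^{k+1}=\beta_u^0(\Psi_0^{k+1})u_0^{k+1}-\gamma_u^0(\Psi_0^{k+1})$ and $R_1^{k+1}$ defined analogously at $x=1$.

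For the boundary contributions, Remark~\ref{estilinfpsi} provides a uniform bound on $\Psi_0^{k+1}$ and $\Psi_{I+1}^{k+1}$, so by continuity of $\beta_u^i, \gamma_u^i$ and the $L^{\infty}$ estimate~\eqref{estiLinfu}, one has $|R_0^{k+1}|+|R_1^{k+1}|\leq C$; since $\max(|v_0|,|v_{I+1}|)\leq \normeunm{v_h}$ follows directly from the definition of $\normeun{\cdot}$, these terms are bounded by $C\normeunm{v_h}$. For the interior sum, I would use the decomposition~\eqref{decompoflux} to split each flux into a convective part $-z_u\dpsi{i+}{k+1}(u_i^{k+1}+u_{i+1}^{k+1})/2$ plus a diffusive part whose coefficient in front of $(u_{i+1}^{k+1}-u_i^{k+1})$ equals $-y_{i+1/2}\coth(y_{i+1/2})/h_{i+1/2}$, where $y_{i+1/2}=-z_u h_{i+1/2}\dpsi{i+}{k+1}/2$. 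Combining $0\leq u_i^{k+1}\leq u^{\max}$, the elementary bound $0<y\coth(y)\leq 1+|y|$, and Cauchy--Schwarz against both $\normeun{u_h^{k+1}}$ and $\normeun{\Psi_h^{k+1}}$, I get
\begin{equation*}
\left|\si{0}{I}\flux{i+}{k+1}(v_{i+1}-v_i)\right|\leq C\bigl(\normeun{u_h^{k+1}}+\normeun{\Psi_h^{k+1}}\bigr)\normeunm{v_h}.
\end{equation*}

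Dividing by $\varepsilon_u>0$ (which holds under the assumptions of Theorem~\ref{THEO}, with $\varepsilon_u=1$ for $u=P$ and $\varepsilon_u=\varepsilon$ for $u=N$), taking the supremum over $v_h$, squaring and summing against $\Delta t_m$ yields
\begin{equation*}
\sn{0}{K_m}\Delta t_m\,\normedualem{\partial_{t,\mathcal{T}_m}^k u_m}^2 \leq \frac{C}{\varepsilon_u^2}\Bigl(T+\normeldeuxhunm{u_m}^2+\normeldeuxhunm{\Psi_m}^2\Bigr),
\end{equation*}
which is finite by~\eqref{borneldeuxpsi} and~\eqref{estimationldeuxu}. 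The main obstacle is controlling the Scharfetter--Gummel diffusive coefficient, since the $\coth$ factor diverges like $|y|$ as $|y|\to\infty$; the sharp inequality $y\coth(y)\leq 1+|y|$ is exactly what converts this potential singularity into a manageable contribution involving $\normeun{\Psi_h^{k+1}}$ once it is paired with the uniform $L^{\infty}$ bound on $u$.
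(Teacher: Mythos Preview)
Your proof is correct and follows essentially the same route as the paper's: test \eqref{equnum} against $v_h$, integrate by parts, split the Scharfetter--Gummel flux via \eqref{decompoflux}, and control the $\coth$ factor by the elementary bound $y\coth(y)\leq 1+|y|$ (the paper phrases this as the $1$-Lipschitz property of $x\mapsto x\coth(x)$ at $0$, which is the same inequality). The only cosmetic difference is that for the boundary terms you invoke the $L^\infty$ bound \eqref{estiLinfu} on $u_0^{k+1},u_{I+1}^{k+1}$ directly, whereas the paper bounds these via $\normeunm{u_m^{k+1}}$; both give the desired estimate after summing in time and applying \eqref{estimationldeuxu}.
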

\begin{proof}
Let consider $ v_{h_m} \in \Htaum $ such that  $ \normeunm{v_{h_m}} \leqslant 1 $. By definition, we have:
$$
\int_0^1 \left( \partial_{t,\mathcal{T}_m}^ku_m \right) v_{h_m} =\sum_{i=0}^I h_i \frac{u_i^{k+1}-u_i^k}{\Delta t} v_i.
$$
Then, using  the scheme \eqref{equnum}, a discrete integration by parts and the reformulation \eqref{decompoflux} of the fluxes, we get 
\begin{equation*}
\left| \int_0^1 \left( \partial_{t,\mathcal{T}_m}^ku_m \right) v_{h_m} \, \dx \right| \leqslant A_1 + A_2 + A_3,
\end{equation*}
with
\begin{align*}
& A_1 = \dfrac{1}{\varepsilon_u} \si{0}{I} \left| v_{i+1} - v_i \right| \left| z_u \dpsi{i+}{k+1} \dfrac{u_{i+1}^{k+1}+u_i^{k+1}}{2} \right|, \\
& A_2 = \dfrac{1}{\varepsilon_u} \si{0}{I} \left| v_{i+1} - v_i \right| \left| \dfrac{z_u \dpsi{i+}{k+1}}{2} \coth \left( \dfrac{- z_u \h{i+} \dpsi{i+}{k+1}}{2} \right) \left( u_{i+1}^{k+1} - u_i^{k+1} \right) \right|, \\
& A_3 = \dfrac{1}{\varepsilon_u} \left| \flux{}{k+1} v_0 \right| + \dfrac{1}{\varepsilon_u} \left| \flux{I+}{k+1} v_{I+1} \right|.
\end{align*}
Using (\ref{estiLinfu}) and (\ref{bornehunpsi}), we obtain:
\begin{align*}
A_1 & \leqslant \dfrac{\vert z_u\vert u^{max}}{\varepsilon_u} \left( \si{0}{I} \dfrac{ \left(v_{i+1} - v_i \right)^2}{\h{i+}} \right)^{\frac{1}{2}} \left( \si{0}{I} \dfrac{ \left( \Psi_{i+1}^{k+1} - \Psi_{i}^{k+1} \right)^2 }{\h{i+}} \right)^{\frac{1}{2}} \leqslant \dfrac{C}{\varepsilon_u} \normeunm{v_{h_m}}.
\end{align*}
Since $ x \mapsto x \coth (x) $ is a 1-Lipschitz continuous function and is equal to 1 in 0 and thanks to Cauchy-Schwarz inequality, we obtain: 
\begin{align*}
A_2 & \leq \dfrac{1}{\varepsilon_u} \si{0}{I} \dfrac{ \left| v_{i+1} - v_i \right| }{\h{i+}} \left| \dfrac{z_u \h{I+} \dpsi{i+}{k+1}}{2} \coth \left( \dfrac{- z_u \h{i} \dpsi{i+}{k+1}}{2} \right) - 1 \right| \left| u_{i+1}^{k+1} - u_i^k \right| \\
& \phantom{\leq} + \dfrac{1}{\varepsilon_u} \si{0}{I} \dfrac{\left| v_{i+1} - v_i \right|}{\h{i+}} \left| u_{i+1}^{k+1} - u_i^k \right| \\
& \leq \left( \dfrac{C}{\varepsilon_u} \normeunm{\Psi_m^{k+1}} + \dfrac{1}{\varepsilon_u} \normeunm{u_m^{k+1}} \right) \normeunm{v_{h_m}}.
\end{align*}
Let us now consider the term $A_3$ containing the boundary conditions. Using \eqref{bornehunpsi} and the continuity of the functions $ (\beta_u^{i}, \gamma_u^{i})_{i = 0, \, 1} $, we have
\begin{align*}
\varepsilon_{u}A_3 & \leqslant \left( \gamma_u^0 \left( \left| \Psi_0^{k+1} \right| \right) + \left| u_0^{k+1} \right| \beta_u^0 \left( \left| \Psi_0^{k+1} \right| \right) \right) \left| v_0 \right| \\
&\qquad+\left( \gamma_u^1 \left( \left| V - \Psi_{I+1}^{k+1} \right| \right) + \left| u_{I+1}^{k+1} \right| \beta_u^1 \left( \left| V - \Psi_{I+1}^{k+1} \right| \right) \right) \left| v_{I+1} \right| \\
& \leqslant C \left(1+| u_0^{k+1}|+\left| u_{I+1}^{k+1} \right| \right)\left( \left| v_0 \right| + \left| v_{I+1} \right| \right) \\
& \leqslant C (1+\normeunm{u_{m}^{k+1}} )\normeunm{v_{h_m}}.
\end{align*}
Therefore, we obtain:
\begin{align*}
\left| \int_0^1 \left( \partial_{t,\mathcal{T}_m}^k u_m \right) v_{h_m} \, \dx \right| & \leqslant \dfrac{C\normeunm{v_{h_m}}}{\varepsilon_u} \left( 1 +\normeunm{u_{m}^{k+1}} \right),
\end{align*}
which yields
\begin{align*}
\sum_{k=0}^{K-1} \!  \Delta t_m\normedualem{\partial_{t,\mathcal{T}_m}^k u_m}^2 \,  & \leqslant \dfrac{2C^2 T}{\varepsilon_u^2} + \dfrac{2C^2}{\varepsilon_u^2} \sn{0}{K} \Delta t_m \normeunm{u_{m}^{k+1}}^2.
\end{align*}
Finally, the estimate \eqref{estu:temp} is a consequence of the $L^2(0,T,H^1(0,1))$ estimate  (\ref{estimationldeuxu}).
\end{proof}

We are now  able to prove the following convergence result.
\begin{prop}\label{CVPN}
Under the assumptions of Theorem \ref{THEO}, there exists \break$u \in L^2(0,T;H^1(0,1))$ such that, up to a subsequence, 
\begin{align*}
& u_m \rightarrow u \quad \hbox{ strongly in } L^2(0, T; L^2(0,1)), \mbox{ when $m\to +\infty$}\\
& \partial_{x,\mathcal{T}_m} u_m \rightharpoonup \partial_x u  \quad \hbox{ weakly in } L^2(0, T; L^2(0,1)) \mbox{ when $m\to+\infty$}.
\end{align*}
\end{prop}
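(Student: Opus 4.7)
The strong $L^2(0,T;L^2(0,1))$ convergence will follow from the discrete Aubin--Simon-type compactness result of Gallou\"et and Latch\'e (Theorem~3.4 in \cite{GL12}) applied to the sequence $(\Htaum)_m$ equipped with the discrete $H^1$-norm $\normeunm{\cdot}$ and its dual $\normedualem{\cdot}$. To invoke this theorem I need four ingredients: (i) a uniform bound of $u_m$ in $\ntrildeuxhun{\cdot}$; (ii) a uniform bound on the discrete time derivative $\partial_{t,\mathcal{T}_m} u_m$ in the discrete $L^2(0,T;H^{-1})$-norm; (iii) the abstract ``compactness'' hypothesis (H1) relating $\normeunm{\cdot}$ to $L^2$-compactness; (iv) the ``separation'' hypothesis (H2) linking the dual norm to $L^2$.

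\textbf{Execution.} Item (i) is exactly estimate \eqref{estimationldeuxu} of Proposition~\ref{prop22}, valid for both $u=P$ and $u=N$. Item (ii) is Proposition~\ref{propestimationderivePN}; here the assumption $\varepsilon>0$ in Theorem~\ref{THEO} plays a decisive role, since the constant appearing in \eqref{estu:temp} scales like $1/\varepsilon_u^2$ and would blow up otherwise. Hypotheses (H1) and (H2) are precisely the contents of Lemmas~\ref{hyp1gl} and~\ref{hyp2gl}. Theorem~3.4 of \cite{GL12} then yields a function $u\in L^2(0,T;L^2(0,1))$ and a (non-relabeled) subsequence such that
\[
u_m \longrightarrow u \quad \text{strongly in } L^2(0,T;L^2(0,1)).
\]

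\textbf{Weak convergence of the gradient and regularity of the limit.} The $L^2(0,T;H^1)$ estimate \eqref{estimationldeuxu} also yields that $\partial_{x,\mathcal{T}_m}u_m$ is bounded in $L^2((0,T)\times(0,1))$, so up to a further subsequence it converges weakly to some $g\in L^2((0,T)\times(0,1))$. To identify $g=\partial_x u$, I take an arbitrary $\varphi\in \mathcal{C}_c^{\infty}((0,T)\times(0,1))$, set $\varphi_i^k=\varphi(x_i,t^k)$, and perform a discrete integration by parts:
\[
\intm{0}{T}{\intm{0}{1}{(\partial_{x,\mathcal{T}_m} u_m)\,\varphi}{x}}{t}
= - \intm{0}{T}{\intm{0}{1}{u_m\,(\partial_{x,\mathcal{T}_m}\varphi_{h_m})}{x}}{t} + R_m,
\]
where the boundary contributions vanish since $\varphi$ is compactly supported in space for $m$ large, and $R_m\to 0$ by the consistency of the piecewise constant interpolation of $\varphi$ on a Tchebychev mesh together with the uniform $L^{\infty}$-bound \eqref{estiLinfu} on $u_m$. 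Letting $m\to+\infty$, the strong convergence of $u_m$ and the smoothness of $\varphi$ give $\int\!\!\int g\,\varphi = -\int\!\!\int u\,\partial_x\varphi$, so $u$ has a weak derivative in $x$ equal to $g$. Combined with the weak bound on $g$, this shows $u\in L^2(0,T;H^1(0,1))$ and concludes the proof.

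\textbf{Main obstacle.} The only delicate point is item (ii): establishing a control on the discrete time derivative that is uniform in the mesh. Thanks to Proposition~\ref{propestimationderivePN}, which leverages the Scharfetter--Gummel splitting \eqref{decompoflux} to bound the flux against $\normeunm{v_{h_m}}$, this is already in hand. The identification $g=\partial_x u$ is then a routine consistency argument as sketched above.
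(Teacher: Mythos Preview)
Your proof is correct and follows essentially the same route as the paper: Lemmas~\ref{hyp1gl}--\ref{hyp2gl} supply hypotheses (H1)--(H2), Propositions~\ref{prop22} and~\ref{propestimationderivePN} give the required uniform bounds, and Theorem~3.4 of \cite{GL12} delivers strong $L^2(0,T;L^2)$ compactness. You spell out the identification $g=\partial_x u$ via a discrete integration by parts, whereas the paper simply states that both the regularity $u\in L^2(0,T;H^1)$ and the weak convergence of $\partial_{x,\mathcal{T}_m}u_m$ are consequences of the bound in Proposition~\ref{prop22}; your added detail is standard and fine (just note that the argument is for a general admissible mesh, not specifically a Tchebychev one).
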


\begin{proof}
The sequence $ (\Htaudeltam)_m $ is a sequence of finite-dimensional subspaces of $L^2(0,1)$. Each subspace $\Htaudeltam$ can be endowed either with the norm $\Vert\cdot\Vert_{1,\mathcal{T}_m}$ or with the norm $\Vert \cdot \Vert_{-1,2,\mathcal{T}_m}$. These two norms verify Lemma \ref{hyp1gl} and Lemma \ref{hyp2gl} which correspond to the hypotheses of Theorem 3.4 in \cite{GL12}.

Then, thanks to Proposition \ref{prop22} and \ref{propestimationderivePN}, 
%
%
we can apply Theorem 3.4 in \cite{GL12} : up to a subsequence, $ u_m \rightarrow u $ strongly
 in $ L^2(0, T, L^2(0,1)) $. Moreover, Proposition \ref{prop22} implies that $ u \in L^2(0,T,H^1(0,1)) $. Finally the weak convergence of $\partial_{x,\mathcal{T}_m} u_m$ in $L^2(0,T;L^2(0,1))$ is also a consequence of Proposition \ref{prop22}.
\end{proof}


\subsection{Compactness of $ (\Psi_m)_m $}\label{compacitepsi}

The convergence of the sequence $ (\Psi_m)_m $ will be obtained as a consequence of the Kolmogorov compactness Theorem, as it is done for instance in \cite{EGH00}.

Let us first remark that the following estimate on the space translates of $\Psi_{h,\Delta t}$ is a consequence of the estimate \eqref{borneldeuxpsi}.

\begin{lem}\label{lemtranspsibis}
Under the assumptions of Theorem \ref{THEO}, there exists a constant $ C $ such that for all $\eta < h$:
\begin{equation*}
\left\| \Psi_{h,\Delta t}(\cdot+\eta,\cdot)-\Psi_{h,\Delta t}(\cdot,\cdot)\right\|_{L^2(0,T,L^2(0,1-\eta))}^2 \leq C \eta
\end{equation*}
\end{lem}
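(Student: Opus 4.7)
The plan is to exploit the piecewise constant structure of $\Psi_{h,\Delta t}$ and reduce the translate estimate to a weighted sum over the mesh interfaces. For each time step $k+1$, writing
\[
\Psi_h^{k+1}(x+\eta) - \Psi_h^{k+1}(x) = \sum_{i=0}^{I} (\Psi_{i+1}^{k+1} - \Psi_i^{k+1})\, \mathds{1}_{\{x < x_{i+1/2} \leq x + \eta\}},
\]
and observing that, under the restriction $\eta < h$, the interval $[x, x+\eta]$ crosses at most one cell interface for a.e.\ $x \in (0, 1-\eta)$, so the sum contains at most one nonzero term. Squaring therefore does not introduce cross terms.

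Integrating in $x$, I would bound the Lebesgue measure of $\{x \in (0, 1-\eta) : x < x_{i+1/2} \leq x+\eta\}$ by $\eta$, which yields
\[
\int_0^{1-\eta} \bigl(\Psi_h^{k+1}(x+\eta) - \Psi_h^{k+1}(x)\bigr)^2 \dx \;\leq\; \eta \sum_{i=0}^{I} (\Psi_{i+1}^{k+1} - \Psi_i^{k+1})^2.
\]
Since $h_{i+1/2} \leq h \leq 1$ for $h$ small enough, each term satisfies $(\Psi_{i+1}^{k+1} - \Psi_i^{k+1})^2 \leq (\Psi_{i+1}^{k+1} - \Psi_i^{k+1})^2 / h_{i+1/2}$, so the sum on the right is controlled by $\normeun{\Psi_h^{k+1}}^2$. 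Multiplying by $\Delta t$ and summing over $k$, the left-hand side is exactly $\|\Psi_{h,\Delta t}(\cdot+\eta,\cdot) - \Psi_{h,\Delta t}\|_{L^2(0,T;L^2(0,1-\eta))}^2$, while the right-hand side is $\eta \normeldeuxhun{\Psi_{h,\Delta t}}^2$. Applying \eqref{borneldeuxpsi} finishes the proof with $C$ depending on $T$ and on the data.

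The only subtle point is the counting of crossed interfaces: on a very non-uniform mesh the smallest cell can be much smaller than $h = \max h_i$, so $\eta < h$ does not automatically guarantee a single crossing. If needed, the argument can be salvaged either by strengthening the hypothesis to $\eta$ smaller than the minimal cell width, or by replacing the pointwise squaring with a Cauchy--Schwarz bound that incorporates the telescoping of several jumps; in both cases one still obtains a bound of order $\eta$ (possibly with an additional $h$ factor that is harmless for Kolmogorov-type compactness arguments). Otherwise the proof is routine and follows the standard finite volume translate estimate in \cite{EGH00}.
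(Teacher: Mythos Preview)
Your proposal is correct and follows essentially the same route as the paper: the paper does not spell out a proof but simply records that the estimate is a consequence of the discrete $L^2(0,T;H^1)$ bound \eqref{borneldeuxpsi}, which is precisely what you establish via the standard finite-volume translate argument of \cite{EGH00}. Your observation about $\eta<h$ versus $\eta<\min_i h_i$ on a non-uniform mesh is well taken; the Cauchy--Schwarz variant you mention (weighting each jump by $\sqrt{h_{i+1/2}}$) is the usual remedy and yields the same $C\eta$ bound without any single-crossing assumption.
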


Then, we can also establish an estimate on the time translates of $\Psi_{h,\Delta t}$.
\begin{lem}\label{lemtranspsi}
Under the assumptions of Theorem \ref{THEO}, there exists a constant $ C $ such that for all $\tau < \Delta t$:
\begin{equation*}
\left\| \Psi_{h,\Delta t}(.,.+\tau)-\Psi_{h,\Delta t}(.,.)\right\|_{L^2(0,T-\tau,L^2(0,1))}^2 \leq C \tau
\end{equation*}
\end{lem}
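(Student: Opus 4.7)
The plan is to reduce the time-translate estimate to a summability estimate on the discrete time differences $\|\Psi_h^{k+1}-\Psi_h^k\|_0^2$, and then to bound these via the difference of the Poisson equation at two successive time steps, using the $L^2$ time-difference estimates on $P$ and $N$ provided by Remark \ref{estitempsu}.

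For the first step, I would exploit that $\Psi_{h,\Delta t}$ is piecewise constant in time. Since $\tau<\Delta t$, for each $t\in[0,T-\tau]$ the integrand $|\Psi_{h,\Delta t}(\cdot,t+\tau)-\Psi_{h,\Delta t}(\cdot,t)|^2$ vanishes unless there exists $k$ such that $t\in [t^{k+1}-\tau,t^{k+1})$, in which case the increment equals $\Psi_h^{k+2}-\Psi_h^{k+1}$. These windows are disjoint (since $\tau<\Delta t$), each of length $\tau$, so
\begin{equation*}
\|\Psi_{h,\Delta t}(\cdot,\cdot+\tau)-\Psi_{h,\Delta t}(\cdot,\cdot)\|_{L^2(0,T-\tau;L^2(0,1))}^2 \;\leq\; \tau \sum_{k=0}^{K-2}\|\Psi_h^{k+2}-\Psi_h^{k+1}\|_0^2.
\end{equation*}
It then suffices to show $\sum_{k}\|\Psi_h^{k+1}-\Psi_h^{k}\|_0^2\leq C$ uniformly in the discretization.

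For the second step, I would set $\delta\Psi_i:=\Psi_i^{k+1}-\Psi_i^k$ and similarly $\delta P_i,\delta N_i$. Subtracting \eqref{eqpsinum} at consecutive time steps yields the same discrete Poisson-type equation with right-hand side $h_i(3\delta P_i-\delta N_i)$, and subtracting the boundary conditions \eqref{CBPsi0}--\eqref{CBPsi1} (which are time-independent) gives $\delta\Psi_0=\alpha_0\,\mathrm{d}\delta\Psi_{1/2}$ and $\delta\Psi_{I+1}=-\alpha_1\,\mathrm{d}\delta\Psi_{I+1/2}$. Testing the differenced equation against $\delta\Psi_i$ and performing a discrete integration by parts as in the proof of Proposition \ref{estimationPN}, the two boundary terms produce the nonnegative contributions $\lambda^2\alpha_0(\mathrm{d}\delta\Psi_{1/2})^2$ and $\lambda^2\alpha_1(\mathrm{d}\delta\Psi_{I+1/2})^2$, which in turn control $\delta\Psi_0^2/\alpha_0$ and $\delta\Psi_{I+1}^2/\alpha_1$. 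Hence
\begin{equation*}
c\,\|\delta\Psi_h\|_{1,\mathcal{T}}^2 \;\leq\; \sum_{i=1}^I h_i(3\delta P_i-\delta N_i)\delta\Psi_i \;\leq\; C\bigl(\|\delta P_h\|_0+\|\delta N_h\|_0\bigr)\|\delta\Psi_h\|_0,
\end{equation*}
and the discrete Poincaré inequality \eqref{Poincare} together with Young's inequality gives $\|\delta\Psi_h\|_0^2 \leq C(\|\delta P_h\|_0^2+\|\delta N_h\|_0^2)$.

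Summing over $k$ and invoking Remark \ref{estitempsu} — which provides $\sum_k\|\delta P_h\|_0^2\leq C$ directly (since $\varepsilon_P=1$) and $\sum_k\|\delta N_h\|_0^2\leq C/\varepsilon$ (which is finite because Theorem \ref{THEO} assumes $\varepsilon>0$) — yields $\sum_k\|\Psi_h^{k+1}-\Psi_h^k\|_0^2\leq C$, which combined with the reduction above closes the proof. I expect the main technical point to be the careful bookkeeping of the boundary terms in the discrete integration by parts: one must check that taking the time difference eliminates the nonhomogeneous Robin data so that only the damping contributions $\alpha_0(\mathrm{d}\delta\Psi_{1/2})^2$ and $\alpha_1(\mathrm{d}\delta\Psi_{I+1/2})^2$ survive, which is what allows the $\|\cdot\|_{1,\mathcal{T}}$-norm (including the boundary terms $\delta\Psi_0^2,\delta\Psi_{I+1}^2$) to be controlled.
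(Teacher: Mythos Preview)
Your proposal is correct and follows essentially the same route as the paper: subtract the discrete Poisson equation \eqref{eqpsinum} and its Robin boundary conditions \eqref{CBPsi0}--\eqref{CBPsi1} at two consecutive time steps, test against $\delta\Psi_i$, use the resulting coercivity in $\|\cdot\|_{1,\mathcal{T}}$ together with \eqref{Poincare} and Young's inequality to control $\sum_k\|\delta\Psi_h\|_{1,\mathcal{T}}^2$ by $\sum_k(\|\delta P_h\|_0^2+\|\delta N_h\|_0^2)$, invoke Remark~\ref{estitempsu}, and then use the piecewise-constant-in-time structure (with $\tau<\Delta t$) to recover the translate estimate. Your handling of the boundary terms and of the index range in the reduction step is in fact slightly more explicit than the paper's.
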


\begin{proof}
Using (\ref{eqpsinum}), (\ref{CBPsi0}) and (\ref{CBPsi1}), we have, for $i\in \llbracket 1;I\rrbracket$,
\begin{align*}
\left\{
\begin{array}{l}
-\lambda^{2} \left( \dpsi{i +}{k+2} - \dpsi{i +}{k+1} - \dpsi{i -}{k+2} + \dpsi{i -}{k+1}  \right) =
\\
\hspace{5.cm} h_{i} \left( 3 \left( P_{i}^{k+2} - P_{i}^{k+1} \right) - \left( N_{i}^{k+2} - N_{i}^{k+1} \right) \right), \\
\Psi_{0}^{k+2} - \Psi_{0}^{k+1}  = \alpha_{0} \left( \dpsi{}{k+2} - \dpsi{}{k+1} \right), \\
\Psi_{I+1}^{k+2} - \Psi_{I+1}^{k+1}  = -\alpha_{1} \left( \dpsi{I +}{k+2} - \dpsi{I +}{k+1} \right).
\end{array}
\right.
\end{align*}
Multiplying by $\left( \Psi_i^{k+2} - \Psi_i^{k+1} \right) $ and summing over $ i $ and $ k $, we obtain $A=B$ with
\begin{align*}
& A=\sn{0}{K} \si{1}{I} - \lambda^2  \left( \dpsi{i +}{k+2} - \dpsi{i -}{k+2} - \dpsi{i +}{k+1} + \dpsi{i -}{k+1} \right) \left( \Psi_i^{k+2} - \Psi_i^{k+1} \right), \\
& B= \sn{0}{K} \si{1}{I} h_{i}  \left( 3 \left( P_{i}^{k+2} - P_{i}^{k+1} \right) - \left( N_{i}^{k+2} -N_{i}^{k+1} \right) \right) \left( \Psi_i^{k+2} - \Psi_i^{k+1} \right).
\end{align*}
Using the boundary conditions, in a same way as in the proof of Proposition \ref{estimationPN}, we get:
\begin{align*}
 A\geqslant \lambda^2 \alpha \sn{0}{K}  \normeun{\Psi^{k+2}_{h} - \Psi^{k+1}_{h}}^2,
\end{align*}
with $ \alpha = \min (1, 1/\alpha_0, 1/\alpha_1) $.
Then, using Young's inequality, (\ref{Poincare}) and Remark \ref{estitempsu}, we also have:
\begin{align*}
B \leqslant \dfrac{\lambda^2 \alpha}{2} \sn{0}{K}  \normeun{\Psi_{h}^{k+2} - \Psi_{h}^{k+1}}^2 + C.
\end{align*}
Then, we obtain
\begin{align*}
\sn{0}{K}  \normeun{\Psi_{h}^{k+2} - \Psi_{h}^{k+1}}^2 \leqslant C,
\end{align*}
with $C$ a constant independent of $\tau$ and $\Delta t$. With \eqref{Poincare}, this yields the expected result, since
$$\left\| \Psi_{h,\Delta t}(.,.+\tau)-\Psi_{h,\Delta t}(.,.)\right\|_{L^2(0,T-\tau,L^2(0,1))}^2=\tau \sum_{k=0}^{K-1} \normezero{\Psi_h^{k+2}-\Psi_h^{k+1}}^2.$$
\end{proof}

Finally, we deduce from Lemmas \ref{lemtranspsibis} and \ref{lemtranspsi} the following convergence result for the sequence $(\Psi_m)_m$. 

\begin{prop}\label{CVpsi}
Under the assumptions of Theorem \ref{THEO},  there exists \break$\Psi \in L^2(0,T;H^1(0,1))$ such that, up to a subsequence,
\begin{align*}
& \Psi_m \rightarrow \Psi \quad \hbox{ strongly in } L^2(0, T; L^2(0,1)), \mbox{ when $m\to +\infty$,}\\
& \partial_{x,\mathcal{T}_m} \Psi_m \rightharpoonup \partial_x \Psi  \quad \hbox{ weakly in } L^2(0, T; L^2(0,1)),\mbox{ when $m\to +\infty$}.
\end{align*}
\end{prop}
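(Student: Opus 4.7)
The plan is to obtain the strong $L^2(0,T;L^2(0,1))$ convergence of $(\Psi_m)_m$ by invoking a Kolmogorov-Riesz-Fréchet compactness criterion on $L^2((0,T)\times(0,1))$, extended by zero outside. The two hypotheses needed are uniform control of the space and time translates, which are exactly Lemma \ref{lemtranspsibis} and Lemma \ref{lemtranspsi}; the uniform $L^2$-bound follows from Proposition \ref{estimationPN} combined with the discrete Poincaré inequality \eqref{Poincare}. A minor technical point is to handle translates at scales $\eta \geq h_m$ and $\tau \geq \Delta t_m$ as well, but this is standard: since $h_m, \Delta t_m \to 0$ as $m\to+\infty$, once $m$ is large enough we can always reduce to the regime covered by the two lemmas, and additional boundary terms arising from extension by zero are handled using the uniform bound on $\Psi_m(\cdot,0)$ and $\Psi_m(\cdot,1)$ given by Remark \ref{estilinfpsi}. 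This yields a subsequence, still denoted $(\Psi_m)_m$, converging strongly in $L^2(0,T;L^2(0,1))$ to some function $\Psi$.

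To identify $\Psi$ as an element of $L^2(0,T;H^1(0,1))$ and to get the weak convergence of the discrete gradients, I would proceed as follows. The bound \eqref{borneldeuxpsi} of Proposition \ref{estimationPN} implies that the sequence $(\partial_{x,\mathcal{T}_m} \Psi_m)_m$ is bounded in $L^2((0,T)\times(0,1))$. By weak compactness in this Hilbert space, we can extract a further subsequence converging weakly to some $\chi \in L^2((0,T)\times(0,1))$. The remaining task is to show $\chi = \partial_x \Psi$ in the distributional sense. For this, I would test against a smooth function $\varphi \in \mathcal{C}_c^{\infty}((0,T)\times(0,1))$, use a discrete integration by parts on $\int_0^T\!\int_0^1 (\partial_{x,\mathcal{T}_m}\Psi_m)\varphi\,dx\,dt$, and pass to the limit using on one side the strong convergence $\Psi_m\to\Psi$ and consistency of the piecewise constant interpolation of $\partial_x\varphi$, and on the other side the weak convergence $\partial_{x,\mathcal{T}_m}\Psi_m \rightharpoonup \chi$. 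The equality of the two limits yields $\int\!\int \chi \varphi = -\int\!\int \Psi \,\partial_x\varphi$, so that $\chi = \partial_x \Psi$ weakly.

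The step I expect to require most care is precisely this last identification, because the piecewise constant functions $\Psi_m$ do not have a classical weak derivative, so one must argue entirely at the level of distributions via the discrete integration by parts, carefully tracking the boundary contributions at $x_{1/2}=0$ and $x_{I+1/2}=1$ (which vanish thanks to the compact support of $\varphi$ in space). Everything else is a direct application of results already proved in the excerpt: Kolmogorov's theorem for the strong convergence, weak compactness in the Hilbert space $L^2((0,T)\times(0,1))$ for the gradient, and weak lower semicontinuity of the norm to conclude $\Psi \in L^2(0,T;H^1(0,1))$.
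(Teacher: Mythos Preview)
Your proposal is correct and follows essentially the same route as the paper: the paper explicitly announces that the convergence of $(\Psi_m)_m$ ``will be obtained as a consequence of the Kolmogorov compactness Theorem, as it is done for instance in [EGH00]'', and then simply states that Proposition~\ref{CVpsi} is ``deduced from Lemmas~\ref{lemtranspsibis} and~\ref{lemtranspsi}'' without writing out the argument. You have supplied the details the paper leaves implicit, in particular the identification of the weak limit of $\partial_{x,\mathcal{T}_m}\Psi_m$ via discrete integration by parts, which is the standard finite-volume argument from the reference the paper cites.
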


%
%

\subsection{Convergence of the traces}\label{traces}

Up to now, we have proved the existence of $P$, $N$ and $\Psi$ belonging to $L^2(0,T;H^1(0,1)) $, such that, up to a subsequence, $(P_m)_m$, $(N_m)_m$ and $(\Psi_m)_m$ converge respectively to $P$, $N$ and $\Psi$. It remains to prove that $(P,N,\Psi)$ is a solution to the corrosion model in the sense of Definition  \ref{solfaible}. Therefore, we will pass to the limit in the scheme, see Section \ref{sectionlimite}. But, at this stage, we will have to pass to the limit in some boundary terms. Therefore, we need the convergence of different traces. 
%
%

For $w=N,P,$ or $\Psi$, we have established that $w\in L^2(0,T;H^1(0,1)) $. Due to the compact embedding of $H^1(0,1)$ into $\mathcal{C}([0,1])$ in 1D, it is clear that the trace of $w$ in 0 and 1 is equal respectively  to $w(0,\cdot)$ and $w(1,\cdot)$, which belong to $L^2(0,T)$.

For a given function $w_{h,\Delta t}\in {\mathcal H}_{{\mathcal T},\Delta t}$, the usual trace, denoted by $\gamma w_{h,\Delta t}$ is a piecewise constant function of time defined by 
$$
\gamma w_{h,\Delta t}(0,t) = w_1^{k+1}\quad {\rm and} \quad\gamma w_{h,\Delta t}(1,t) = w_{I}^{k+1}, \quad \forall t\in [t^k, t^{k+1}).
$$
But, it will be easier to deal with an approximate trace ${\tilde \gamma} w_{h,\Delta t}$ defined by 
$$
\tild{\gamma} w_{h,\Delta t}(0,t) = w_0^{k+1}\quad {\rm and} \quad
\tild{\gamma} w_{h,\Delta t}(1,t) = w_{I+1}^{k+1}, \quad \forall t\in [t^k, t^{k+1}) .
$$


\begin{prop}
For $w=N,P,$ or $\Psi$, the sequence $ (\tild{\gamma} w_m(0,\cdot))_m $ (resp. $ (\tild{\gamma} w_m(1,\cdot))_m $) converges towards $  w(0,\cdot) $ (resp. $ w(1,\cdot) $) strongly in $ L^1(0,T) $, up to a subsequence.
\end{prop}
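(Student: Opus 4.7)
The plan is to prove convergence at $x=0$ (the case $x=1$ being symmetric) and to use the generic notation $w$ for any of $N$, $P$, or $\Psi$. The strategy is to decompose
\[
\tild{\gamma} w_m(0,\cdot) - w(0,\cdot) = A_m + B_m,
\]
with $A_m := \tild{\gamma} w_m(0,\cdot) - \gamma w_m(0,\cdot)$ the ``ghost-to-first-cell'' defect and $B_m := \gamma w_m(0,\cdot) - w(0,\cdot)$ the ``first-cell-to-limit-trace'' defect, and to show that both tend to $0$ in $L^2(0,T) \hookrightarrow L^1(0,T)$.

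For $A_m$, on $[t^k, t^{k+1})$ one has $A_m(t) = w_0^{k+1} - w_1^{k+1}$, so Cauchy-Schwarz gives
\[
\|A_m\|_{L^2(0,T)}^2 = \sn{0}{K_m} \Delta t_m (w_0^{k+1} - w_1^{k+1})^2 \leqslant h_{1/2,m} \sn{0}{K_m} \Delta t_m \frac{(w_1^{k+1} - w_0^{k+1})^2}{h_{1/2,m}} \leqslant C \, h_{1/2,m},
\]
by the uniform $L^2(0,T;H^1)$ discrete estimates of Propositions~\ref{estimationPN} and~\ref{prop22}. Since $h_{1/2,m} \to 0$, $A_m \to 0$.

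For $B_m$, the key observation is that by definition (\ref{wrecspace}), $\gamma w_m(0,t) = w_1^{k+1} = (h_1^m)^{-1} \int_0^{h_1^m} w_m(x,t) \, \dx$. For any fixed $\eta > 0$, I introduce the averages
\[
M_{m,\eta}(t) := \frac{1}{\eta} \intx{0}{\eta}{w_m(x,t)}, \qquad M_\eta(t) := \frac{1}{\eta} \intx{0}{\eta}{w(x,t)}.
\]
A discrete Poincar\'e-type inequality --- obtained by writing $w_1^{k+1} - w_i^{k+1}$ as a telescopic sum and applying Cauchy-Schwarz against the discrete gradient --- yields, provided $\eta \geqslant h_1^m$,
\[
\|\gamma w_m(0,\cdot) - M_{m,\eta}\|_{L^2(0,T)}^2 \leqslant C \eta \, \normeldeuxhunm{w_m}^2 \leqslant C \eta,
\]
uniformly in $m$. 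The continuous analog, obtained by applying the fundamental theorem of calculus to $w(\cdot, t) \in H^1(0,1)$, gives $\|w(0,\cdot) - M_\eta\|_{L^2(0,T)}^2 \leqslant C \eta$. Finally, for $\eta$ fixed, the strong $L^2(0,T;L^2(0,1))$ convergence $w_m \to w$ from Propositions~\ref{CVPN} and~\ref{CVpsi} trivially implies $M_{m,\eta} \to M_\eta$ in $L^2(0,T)$. A standard $\varepsilon/3$-argument (choose $\eta$ small enough, then $m$ large enough) then yields $B_m \to 0$ in $L^2(0,T)$.

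The main technical point is the discrete Poincar\'e estimate used for $B_m$: one must control
\[
(w_1^{k+1} - w_i^{k+1})^2 \leqslant (x_i - x_1) \sum_{j=1}^{i-1} \frac{(w_{j+1}^{k+1} - w_j^{k+1})^2}{h_{j+1/2,m}}
\]
via Cauchy-Schwarz on the telescopic identity $w_1^{k+1} - w_i^{k+1} = \sum_{j=1}^{i-1}(w_j^{k+1} - w_{j+1}^{k+1})$, and then integrate the resulting bound against the cell measures in $[0,\eta]$ to produce the prefactor $\eta$. Once this inequality is in place, summation over the time indices and the uniform bounds \eqref{borneldeuxpsi} and \eqref{estimationldeuxu} close the argument. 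Everything at $x=1$ is obtained by the obvious symmetric manipulation.
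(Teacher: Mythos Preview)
Your proof is correct and follows essentially the same averaging-over-a-thin-strip strategy as the paper: both arguments insert the average $\frac{1}{\eta}\int_0^\eta w_m(y,\cdot)\,dy$ (respectively $\frac{1}{\eta}\int_0^\eta w(y,\cdot)\,dy$), control the trace-to-average defect by the discrete (resp.\ continuous) $L^2(0,T;H^1)$ bound via a telescopic Cauchy--Schwarz, and pass to the limit in the middle term using the strong $L^2$ convergence of $w_m$. Two cosmetic differences: you peel off the ghost-to-first-cell jump $A_m$ separately before averaging (the paper absorbs it into its term $T_1$ by working directly with $w_0^{k+1}$), and you work in $L^2(0,T)$ throughout, which in fact yields the paper's subsequent Corollary~\ref{Corotrace} directly without appealing to the $L^\infty$ bounds.
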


\begin{proof}
To prove this Proposition we follow the ideas presented in the proof of lemma 4.8 of \cite{BrCaHi13}.  More precisely , for all $\varrho >0$, we write:
\begin{align}
\intt{0}{T}{\left| \tild{\gamma}w_m (0,t) - w(0,t) \right|} & = \dfrac{1}{\varrho} \intm{0}{\varrho}{\intt{0}{T}{\left| \tild{\gamma}w_m (0,t) -  w(0,t) \right|}}{y} \leqslant T_1 + T_2 + T_3, \label{T123}
\end{align}
with
\begin{align*}
& T_1 = \dfrac{1}{\varrho} \intm{0}{\varrho}{\intt{0}{T}{\left| \tild{\gamma}w_m (0,t) - w_m (y,t) \right|}}{y}, \\
&T_2 = \dfrac{1}{\varrho} \intm{0}{\varrho}{\intt{0}{T}{\left| w_m (y,t) - w(y,t) \right|}}{y}, \\
& T_3 = \dfrac{1}{\varrho} \intm{0}{\varrho}{\intt{0}{T}{\left| w(y,t) -  w(0,t) \right|}}{y}. 
\end{align*} 
Let us first note that $T_3\to 0$ when $\varrho \to 0$, as $w(0,t)$ is the trace on $x=0$ of $w(\cdot,t)$. Let us now show that $T_1$ and $T_2$ converge also to 0.

Let us define $ K_{\varrho}=\left\lfloor \dfrac{\varrho}{h_m} \right\rfloor + 1 $. We can show:
\begin{align*}
T_1 & \leqslant \frac{1}{\varrho} \sn{0}{K} \Delta t_m \som{1}{K_{\varrho}}{p} h_p \left| w_0^{k+1} - w_p^{k+1} \right|  \leqslant \dfrac{1}{\varrho} \sn{0}{K} \Delta t_m \som{1}{K_{\varrho}}{p} h_p \si{0}{p-1} \left| w_{i+1}^{k+1} - w_i^{k+1} \right|.
\end{align*}
Using Cauchy-Schwarz inequality and \eqref{borneldeuxpsi} or \eqref{estimationldeuxu}
\begin{align}
T_1 & \leqslant C \sqrt{T} \dfrac{(h_m + \varrho)^{\frac{3}{2}}}{\varrho} \underset{h_m \to 0}{\longrightarrow} C \sqrt{T} \varrho^{\frac{1}{2}}. \label{T1}
\end{align}
Moreover,
\begin{align}
T_2 \leqslant \dfrac{\sqrt{T}}{\sqrt{\varrho}} \left( \intm{0}{\varrho}{\intt{0}{T}{ (w_m (y,t) - w(y,t) )^2}}{y} \right)^{\frac{1}{2}} \underset{h_m \to 0}{\longrightarrow} 0\label{T2},
\end{align}
which gives with (\ref{T123}), (\ref{T1}), (\ref{T2}) and $\rho$ tends to 0, 
\begin{align*}
\lim_{h_m \to 0} \intt{0}{T}{\left| \tild\gamma w_m (0,t) - w(0,t) \right|} = 0.
\end{align*}
This concludes the proof.
\end{proof}

\begin{cor} \label{Corotrace}
Up to a subsequence, the sequence $ (\tild{\gamma} w_m(0,\cdot))_m $ (resp. $ (\tild{\gamma} w_m(1,\cdot))_m $) converges towards $  w(0,\cdot) $ (resp. $ w(1,\cdot) $) strongly in $ L^2(0,T) $, for $w=P,N$ or $\Psi$.
\end{cor}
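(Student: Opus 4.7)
The plan is to upgrade the $L^1(0,T)$ convergence given by the previous proposition to $L^2(0,T)$ convergence by exploiting uniform $L^\infty(0,T)$ bounds on the discrete traces.

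First, for each $w \in \{P,N,\Psi\}$, I would extract a subsequence (still indexed by $m$) along which $\tild{\gamma} w_m(0,\cdot) \to w(0,\cdot)$ strongly in $L^1(0,T)$ by the previous proposition. From this $L^1$ convergence, extract a further subsequence so that $\tild{\gamma} w_m(0,t) \to w(0,t)$ for almost every $t \in (0,T)$.

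Next, I would observe that the discrete boundary values are uniformly bounded: for $u = P, N$, the $L^\infty$-stability estimate \eqref{estiLinfu} gives $0 \leq u_0^{k+1} \leq u^{max}$ for every $m$ and $k$, so that $\Vert \tild{\gamma} u_m(0,\cdot) \Vert_{L^\infty(0,T)} \leq u^{max}$; for $w = \Psi$, Remark \ref{estilinfpsi} provides a constant $C$ independent of $h$ and $\Delta t$ such that $\Vert \tild{\gamma} \Psi_m(0,\cdot) \Vert_{L^\infty(0,T)} \leq C$. Combined with the $L^1$ convergence, this in particular gives $\Vert w(0,\cdot)\Vert_{L^\infty(0,T)}\leq C$ as well, so the constant function equal to $2C$ is a dominating function. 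Lebesgue's dominated convergence theorem applied to $|\tild{\gamma} w_m(0,t) - w(0,t)|^2$ then yields
\begin{equation*}
\intt{0}{T}{\left| \tild{\gamma} w_m(0,t) - w(0,t) \right|^2} \longrightarrow 0.
\end{equation*}
The same argument works verbatim at $x=1$, replacing $w_0^{k+1}$ by $w_{I+1}^{k+1}$.

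There is no real obstacle here: the previous proposition provided the delicate ingredient (quantitative control of $T_1$, $T_2$, $T_3$ giving $L^1$ convergence of traces), and the upgrade to $L^2$ is purely a consequence of the uniform $L^\infty$ estimates \eqref{estiLinfu} and Remark \ref{estilinfpsi}, together with dominated convergence along an a.e. converging subsequence.
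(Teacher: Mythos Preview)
Your proof is correct and follows essentially the same route as the paper: from the $L^1(0,T)$ convergence of the previous proposition one extracts an a.e.\ converging subsequence, and the uniform $L^\infty$ bounds on the discrete traces (from Proposition~\ref{PROP} for $P,N$ and from Proposition~\ref{estimationPN} for $\Psi$) allow one to conclude $L^2(0,T)$ convergence by dominated convergence. The only cosmetic remark is that for the boundary values $\Psi_0^{k+1},\Psi_{I+1}^{k+1}$ it is cleaner to invoke \eqref{bornehunpsi} directly (since $\normeun{\cdot}$ contains $w_0^2+w_{I+1}^2$) rather than Remark~\ref{estilinfpsi}, whose $L^\infty(0,1)$ statement concerns only the interior values.
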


\begin{proof}
The sequence $ (\tild{\gamma} w_m(0,\cdot))_m $ (resp. $ (\tild{\gamma} w_m(1,\cdot))_m $) converges almost everywhere on $ (0,T) $. Moreover, $\tild{\gamma}w_m(0,\cdot) $ (resp. $ \tild{\gamma} w_m(1,\cdot) $) is uniformly bounded thanks to the Propositions \ref{PROP} and \ref{estimationPN}. Then, we obtain the strong convergence in $ L^2(0,T) $.
\end{proof}


\subsection{Passage to the limit} \label{sectionlimite}

We end the proof of Theorem \ref{THEO}. Indeed, it remains to prove that the limit $P, N$, $ \Psi $, defined in Propositions \ref{CVPN} and \ref{CVpsi} is a solution of $(\mathcal{P})$ in the sense of Definition \ref{solfaible}. 
To this end, we follow the method used in \cite{Bes12} and \cite{ChLiPe03}. 

First of all we prove that ($P, N$, $\Psi$) satisfy \eqref{DDufaible}. For $u=P,N$ and $\varphi \in \mathcal{D}([0,1]\times [0,T[)$, we define
\begin{align*}
& A_{10} (m) = - \left( \int_0^T \! \! \! \! \int_0^1 \varepsilon_u u_m(x,t) \left( \partial_t  \varphi(x,t) \right) \; \dx \dt + \int_0^1 \varepsilon_u u_m(x,0) \varphi(x,0) \; \dx \right), \\
& A_{20} (m) = \int_0^T \! \! \! \! \int_0^1 \left( \partial_{x,\mathcal{T}_m} u_m(x,t) \right) \left( \partial_x \varphi(x,t) \right) \; \dx \dt, \\
& A_{30} (m) = \int_0^T \! \! \! \! \int_0^1 z_u u_m(x,t) \left( \partial_{x,\mathcal{T}_m} \Psi_m(x,t) \right) \left( \partial_x \varphi(x,t) \right) \; \dx \dt, \\
& A_{40}(m) = \int_0^T \left( \beta_u^1(V - \tild{\gamma} \Psi_m(1,t))\tild{\gamma} u_m(1,t) - \gamma_u^1(V - \tild{\gamma} \Psi_m(1,t)) \right) \varphi(1,t) \ \dt, \\
& \qquad \qquad + \int_0^T \left( \beta_u^0( \tild{\gamma} \Psi_m(0,t)) \tild{\gamma} u_m(0,t) - \gamma_u^0 (\tild{\gamma} \Psi_m(0,t))\right) \varphi(0,t) \ \dt, \\
& \varepsilon_A (m) = - A_{10}(m) - A_{20}(m) - A_{30}(m) - A_{40}(m).
\end{align*}
Due to Propositions \ref{CVPN}, \ref{CVpsi} and Corollary \ref{Corotrace}, we have:
\begin{align*}
\varepsilon_A (m) & \underset{m \to + \infty}{\longrightarrow}  \int_0^T \! \! \! \! \int_0^1 \varepsilon_u u \left( \partial_t  \varphi \right) \; \dx \dt +\int_0^1 \varepsilon_u u(x,0) \varphi(x,0) \; \dx + \int_0^T \! \! \! \! \int_0^1 J_{u} \left( \partial_x \varphi \right) \; \dx \dt \\
& \quad - \int_0^T \left( \beta_u^1(V - \Psi(1,t))u(1,t) - \gamma_u^1(V - \Psi(1,t)) \right) \varphi_1(1,t) \ \dt \\
& \quad - \int_0^T \left( \beta_u^0(\Psi(0,t))u(0,t) - \gamma_u^0(\Psi(0,t))\right) \varphi_1(0,t) \ \dt.
\end{align*}
Thus, it remains to prove that 
$\varepsilon_A (m) {\longrightarrow} 0$, when $m\to+\infty$.
To this end,  we multiply the scheme (\ref{equnum}) by $ \Delta t \, \varphi_i^k $, with $\varphi_i^k=\varphi(x_i,t^k)$, and sum over $ i $ and $ k $. Using the fluxes decomposition \eqref{decompoflux} and the boundary conditions \eqref{CBdisc}, we obtain:
\begin{align*}
A_1(m)+A_2(m) + A_3(m) + A_4(m)=0, 
\end{align*}
with
\begin{align*}
A_1(m) & = \sn{0}{K} \si{1}{I} \varepsilon_u h_i \left( u_i^{k+1} - u_i^k \right) \varphi_i^k \\
& = -\varepsilon_u \sum_{k=0}^{K-1} \sum_{i=1}^I \int_{t^k}^{t^{k+1}} \int_{x_{i-1/2}}^{x_{i+1/2}} u_i^{k+1} \partial_t \varphi(x_i,t)\;\dx\dt -\varepsilon_u \sum_{i=1}^I \int_{x_{i-1/2}}^{x_{i+1/2}} u_i^0\varphi(x_i,0)\;\dx,\\
A_2(m) & = - \sn{0}{K} \si{0}{I} \Delta t_m\dfrac{z_u \dpsi{i+}{k+1}}{2} \coth \left( \dfrac{-z_u \h{i+} \dpsi{i+}{k+1}}{2} \right) \left(u_{i+1}^{k+1} - u_i^{k+1} \right) \left( \varphi_{i+1}^k - \varphi_i^k \right),\\
A_3(m) & = \sn{0}{K} \si{0}{I} \Delta t_m z_u \dpsi{i+}{k+1} \dfrac{u_{i+1}^{k+1} + u_i^{k+1}}{2} \left( \varphi_{i+1}^k - \varphi_i^k \right), \\
&= z_u\sn{0}{K} \si{0}{I}  \frac{u_{i+1}^{k+1} + u_i^{k+1}}{2} \dpsi{i+}{k+1} \int_{t^k}^{t^{k+1}} \left( \varphi(x_{i+1},t^k)-\varphi(x_{i},t^k)\right) \; \dt,\\
A_4(m) &  = \sn{0}{K} \Delta t_m \left[ \left( \beta_u^1(V - \Psi_{I+1}^{k+1}) u_{I+1}^{k+1} - \gamma_u^1(V - \Psi_{I+1}^{k+1}) \right) \varphi_{I+1}^k \right.\\
&\hskip6cm + \left. \left( \beta_u^0( \Psi_{0}^{k+1}) u_{0}^{k+1} - \gamma_u^0(\Psi_{0}^{k+1}) \right) \varphi_0^k \right].
\end{align*}
 Following the standard method used in \cite{Bes12}, we can prove that
\[ \vert A_{i} - A_{i0} \vert \underset{m \to + \infty}{\longrightarrow} 0, \quad \text{for} \ i \in \llbracket 1, \, 3 \rrbracket. \]
The only difference with the standard method concerns the terms related to the boundary conditions.

Then, let us compare $ A_4 $ with $ A_{40} $. We can rewrite $ A_{40} $ under the form:
\begin{align*}
A_{40}(m) & = \sn{0}{K} \left( \beta_u^1(V - \Psi_{I+1}^{k+1}) u_{I+1}^{k+1} - \gamma_u^1(V - \Psi_{I+1}^{k+1}) \right) \int_{t^k}^{t^{k+1}} \varphi(1,t) \; \dt \\
& \quad - \sn{0}{K} \left( \beta_u^0(\Psi_0^{k+1}) u_0^{k+1} - \gamma_u^0(\Psi_0^{k+1}) \right) \int_{t^k}^{t^{k+1}} \varphi(0,t) \; \dt ,
\end{align*}
and, using the continuity of the functions $ (\beta_u^{i}, \gamma_u^{i})_{i=0, \,1} $, the regularity of $\varphi$, \eqref{estiLinfu} and \eqref{estifonc1}, we obtain:
\begin{align*}
\left| A_{4}(m) - A_{40}(m) \right| & \leqslant C \|\varphi\|_{\mathcal{C}^2((0,T)\times[0,1])}\Delta t_m \underset{m \to + \infty}{\longrightarrow} 0.
\end{align*}
Finally, $ \varepsilon_A(m) \underset{m \to + \infty}{\longrightarrow} 0 $ and $P, N$, $ \Psi $ satisfy \eqref{DDufaible}. In the same way, we can prove that $P, N$ and $ \Psi $ satisfy \eqref{poissonfaible}. This concludes the proof of Theorem \ref{THEO}.

\section{Numerical experiments} \label{sectionexperience}

In this Section, we present some numerical results for a test case close to the real case given in \cite{BBC12}. Indeed, we have modified some data of the real case in order to have a simplified model which is close to the DPCM model.
For the DPCM and simplified models, it was performed a scaling relative to the characteristic time of the cations. This gives a very small value for the coefficient $ \varepsilon $. Therefore, even if we have proved the convergence of the scheme only for $\varepsilon>0$, we want to study numerically the behavior of the scheme for different values of $\varepsilon$. More precisely, we are interested in its behaviour in the limit $\varepsilon\to 0$. 

\subsection{Presentation of the test case}

The test case, introduced here, is inspired by the real case given in \cite{BBC12}. Starting from the data given in \cite{BBC12}, we have built a test case  which is adapted for the simplified model. We only consider a potentiostatic case, which means that $V$ is an applied potential. 
For the different experiments, we will consider the numerical values for the dimensionless simplified model given in Table \ref{parametres}.

\begin{table}[h!]
\caption{Dimensionless parameters.} \label{parametres}
\begin{center}
\begin{tabular}{c|c|c|c|c|c|c|c|c}
$ \lambda^2 $ & $ \alpha_0 $ & $ \alpha_1 $ & $ P^m $ & $ N^m $ & $ k_P^0 $ &  $ k_N^0 $ & $ k_P^1 $ & $ k_N^1 $ \\ 
\hline \hline $ 1.1 \ 10^{-3} $  & $ 0.177 $ & $ 0.089 $ & $ 2 $ & $ 1 $ & $ 10^{8} $ & $ 10^{-18} $ & $ 10^{11} $ & $ 26.8 $ \\ 
\hline \hline  $ m_P^0 $ & $ m_N^0 $ & $ m_P^1 $ & $ m_N^1 $ & $ a_P^0 $ & $ a_N^0 $ & $ b_P^0 $ & $ b_N^0 $ & $ a_P^1 $ \\ 
\hline \hline  $ 0 $ & $ 1.45 \ 10^{-24} $ & $ 10^{8} $ & $ 26.8 $ & $ 0.5 $ & $ 0.5 $ & $ 0.5 $ & $ 0.5 $ & $ 0.5 $ \\ 
\hline \hline $ a_N^1 $ & $ b_P^1 $ & $ b_N^1 $ & $ \Delta \Psi_0^{pzc} $ & $ \Delta \Psi_1^{pzc} $ & $ V $ & $ \rho_{hl} $ & &  \\ 
\hline \hline $ 0.5 $ & $ 0.5 $ & $ 0.5 $ & $ -0.866 $ & $ 0 $ & $ 0.5 $ & $ -5 $ & &  \\ 

\end{tabular} 
\end{center}
\end{table}

Let us remark that for such a choice of coefficients all the hypotheses (\eqref{hyp_mk}, \eqref{hyp_ab}, \eqref{hyp_NPrho}, $\alpha_0$ and $\alpha_1$ positive, \eqref{Dpsi0} and \eqref{Dpsi1}) are satisfied except for the right inequality in \eqref{Dpsi1}. Nevertheless, we observe that numerically \eqref{estiLinfu} still holds. 

As mentioned in the introduction, a scaling relative to the charateristic time gives a very small coefficient $ \varepsilon $, which is the quotient of the mobilities of the densities. In practice, $\varepsilon$ is close to $10^{-14}$.
Then, for the experiments, we will consider different small values of $ \varepsilon $, $ \varepsilon \in \{ 0, 10^{-6}, 10^{-4}, 10^{-2} \} $ in order to verify that the scheme is asymptotic preserving in the limit $\varepsilon$ tends to zero.

\subsection{Numerical results }

First, we want to illustrate numerically the convergence of the scheme, for different values of $\varepsilon$ and its asymptotic preserving behavior in the limit $\varepsilon$ tends to zero. 
Since the exact solution of this problem is not available, we compute a reference solution on a uniform mesh made of $ 4000 $ cells with a time step $ \Delta t = 10^{-6} $ for each values of $ \varepsilon $. For the simplified model, there is a stationary state in long-time. Thus, it is important to take a small time $ T$  in order to compute a solution different from the stationary state. Then, for Figures \ref{erreurl2}, \ref{erreurl2tps} and \ref{erreurl2tpsepsi}, the $ \mathrm{L}^2 $-norms in space are computed at the final time $ T=10^{-3} $.

Figure \ref{erreurl2} shows the $ \mathrm{L}^2 $-convergence rate in space of the scheme for different values of $\varepsilon$. 
We observe  that  the convergence rate is independent of $ \varepsilon $ and the scheme has an order 2 in space, even for $\varepsilon=0$. It is due to the choice of  Scharfetter-Gummel fluxes for the numerical approximation of the convection-diffusion fluxes. Figure \ref{erreurl2tps} shows the $L^2$-convergence rate in time for different values of $\varepsilon$. We observe that this convergence rate is also independent of $\varepsilon$ and the scheme has an order 1 in time as expected. 

In Figure \ref{erreurl2tpsepsi} we present, for different values of the time step, the $ \mathrm{L}^2 $ error in space at the final time $T=10^{-3}$ with respect to $ \varepsilon$. The asymptotic preserving behavior of the scheme, in the limit $\varepsilon$ tends to zero, clearly appears.   

\begin{figure}[h!]
\begin{center}
\begin{subfigure}[b]{0.45\textwidth}
\caption{$ \mathrm{L}^2 $ error on the cations density}
\includegraphics[width=\textwidth]{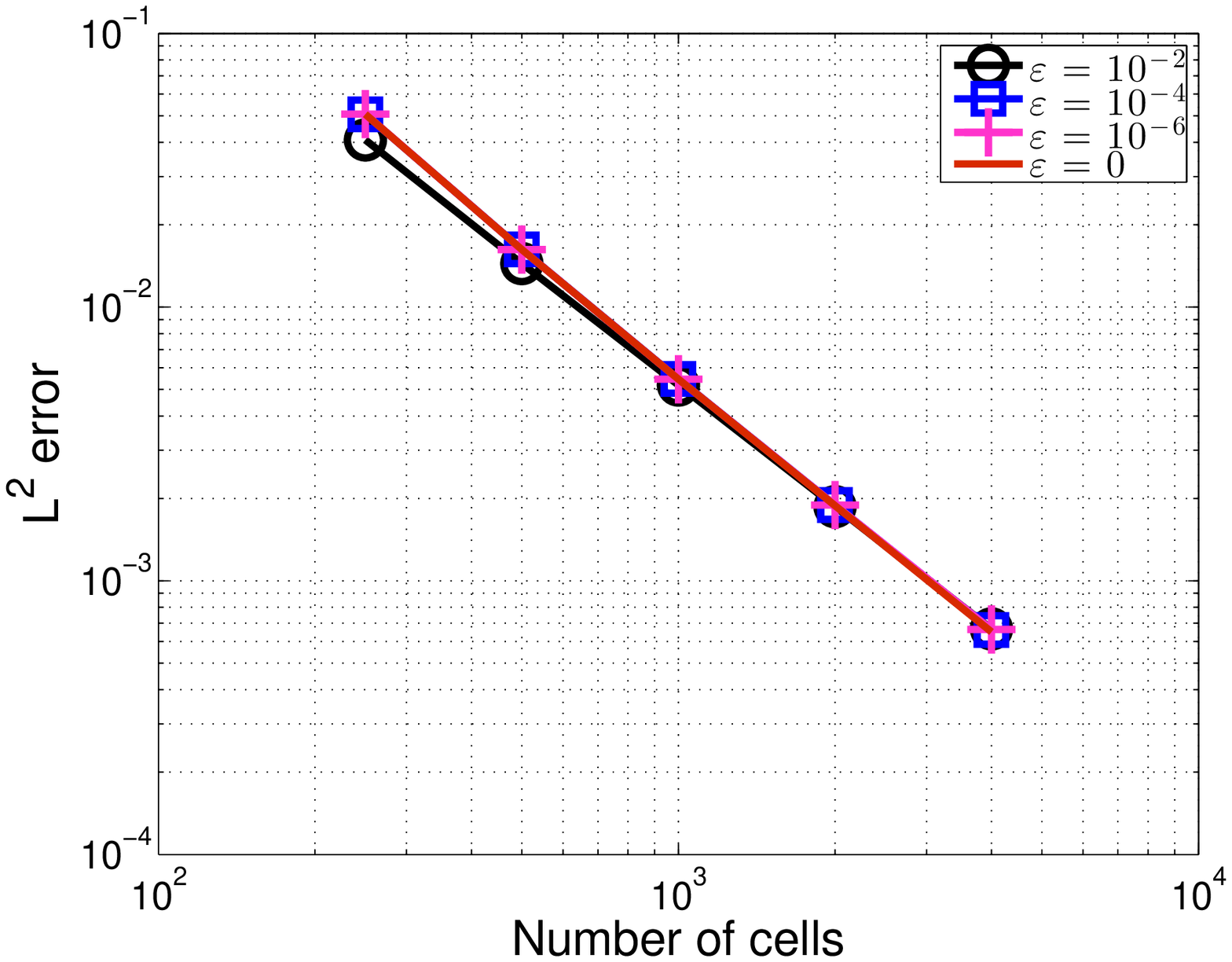}
\end{subfigure}
\begin{subfigure}[b]{0.45\textwidth}
\caption{$ \mathrm{L}^2 $ error on the electrons density}
\includegraphics[width=\textwidth]{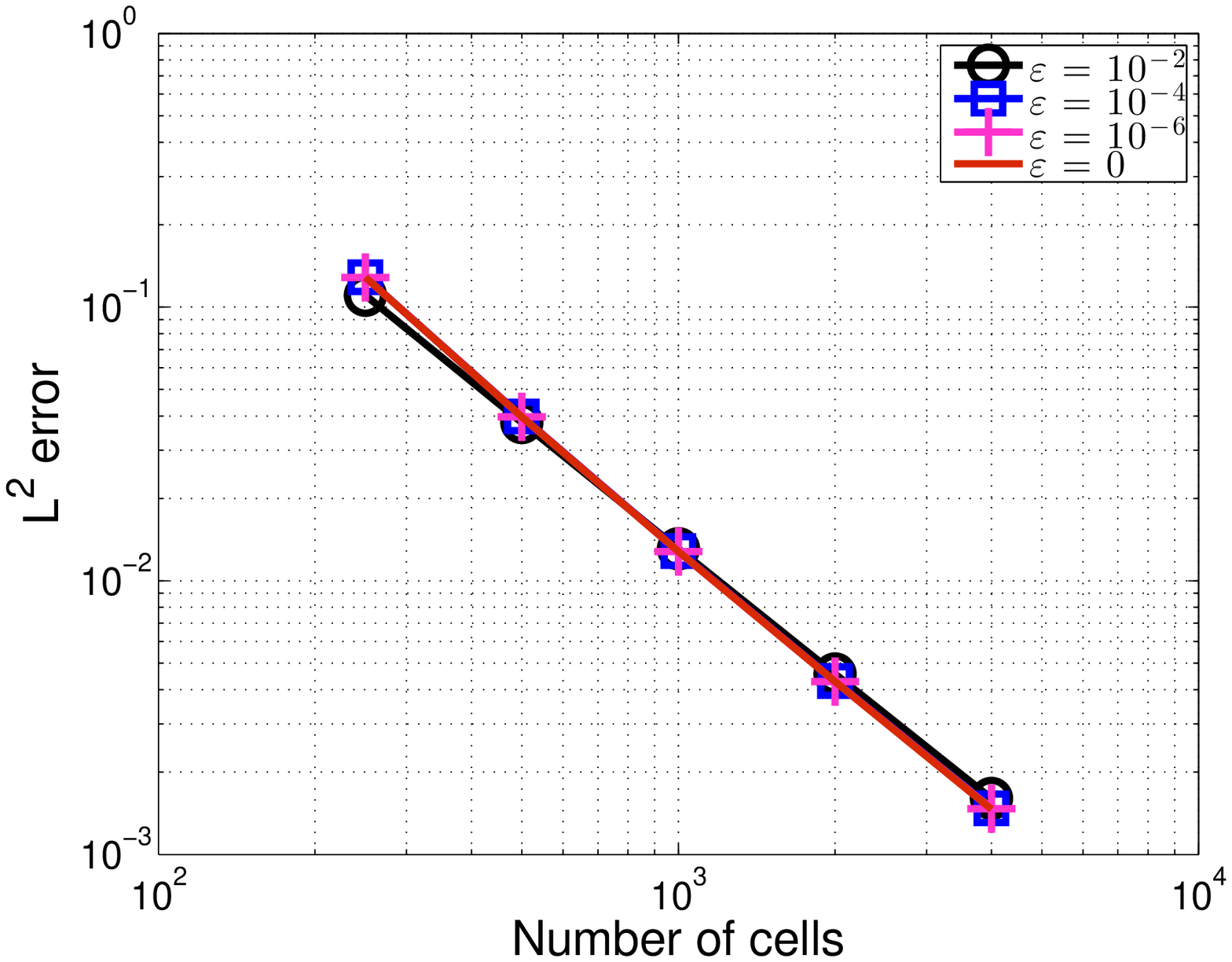}
\end{subfigure}
L
\begin{subfigure}[b]{0.45\textwidth}
\caption{$ \mathrm{L}^2 $ error on the electric potential}
\includegraphics[width=\textwidth]{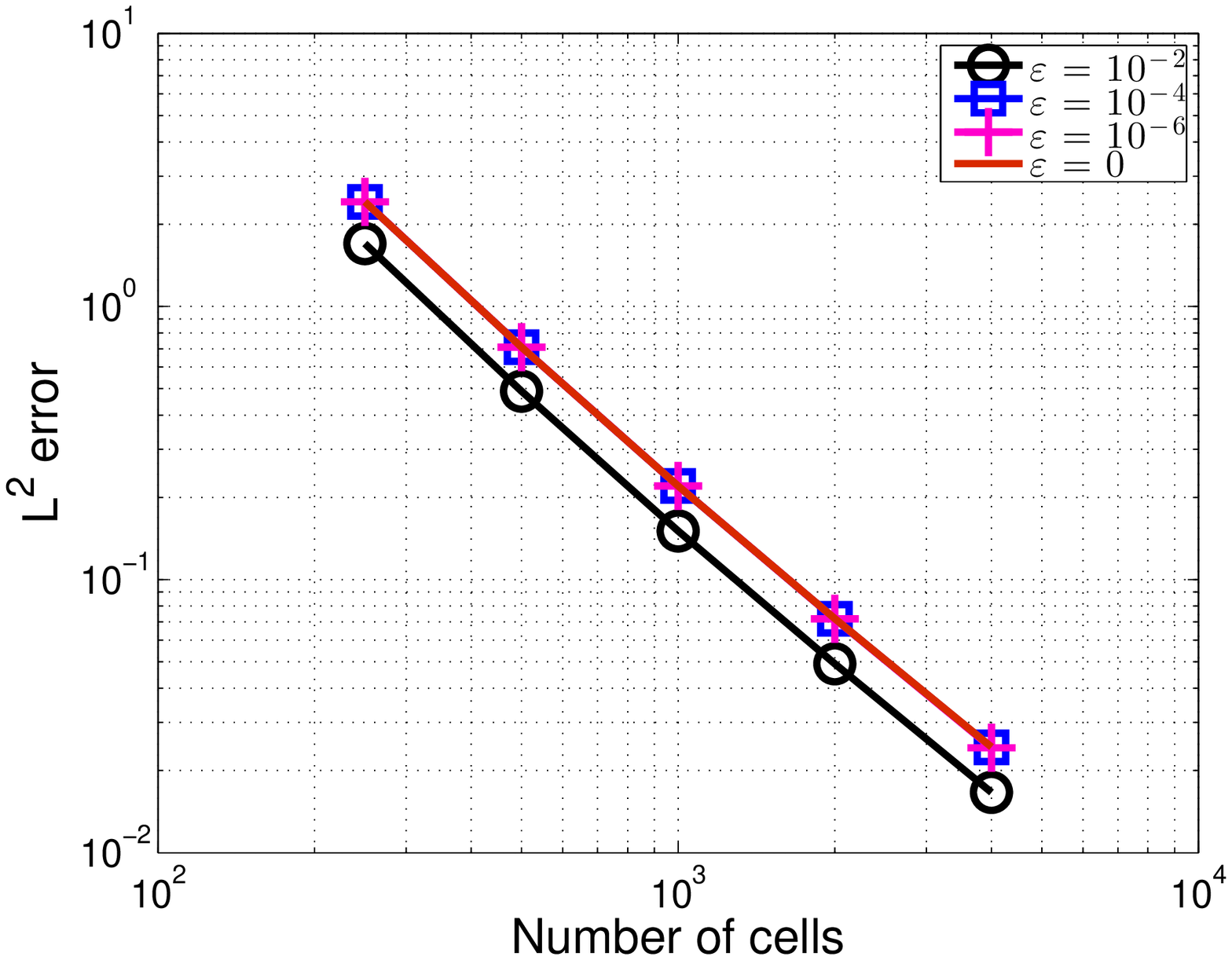}
\end{subfigure}
\end{center}
\caption{$ \mathrm{L}^2 $  errors with respect to the space step on the densities and the electric potential for different values of $ \varepsilon $ and at final time $ T= 10^{-3} $.} \label{erreurl2}
\end{figure}

In Figure \ref{densitepotentiel}, we present  the profiles of the two densities and the electric potential, in the case $ \varepsilon = 0 $.  In order to reach the stationary state, we use as final time $ T=1 $. The solution is computed on a uniform mesh with $ 2000 $ cells in space and with a time step $ \Delta t = 10^{-3} $. We obtain the expected behavior for the densities and the electric potential.

\begin{figure}[h!]
\begin{center}
\begin{subfigure}[b]{0.45\textwidth}
\caption{$ \mathrm{L}^2 $ error on the electrons density}
\includegraphics[width=\textwidth]{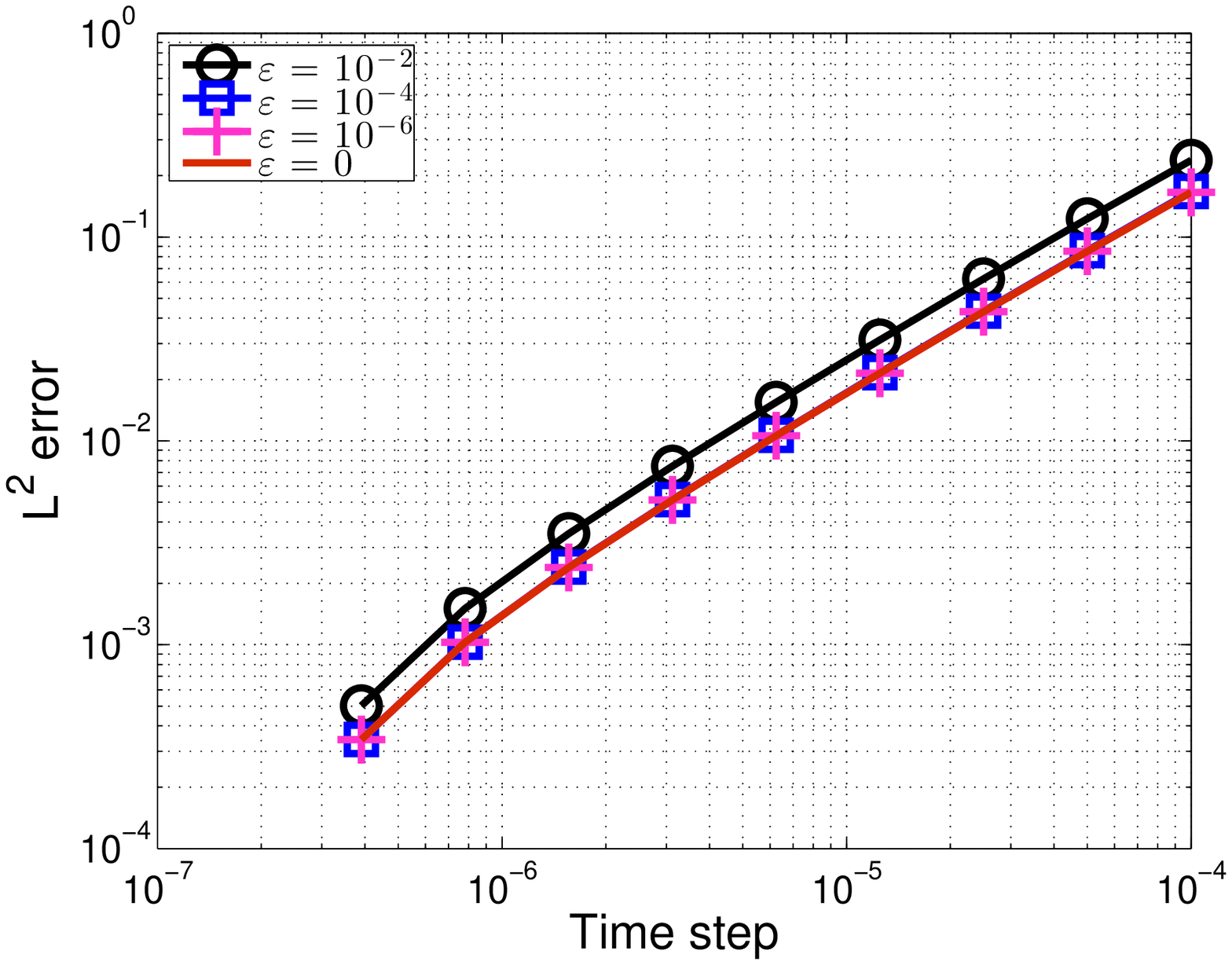}
\end{subfigure}
\begin{subfigure}[b]{0.45\textwidth}
\caption{$ \mathrm{L}^2 $ error on the cations density}
\includegraphics[width=\textwidth]{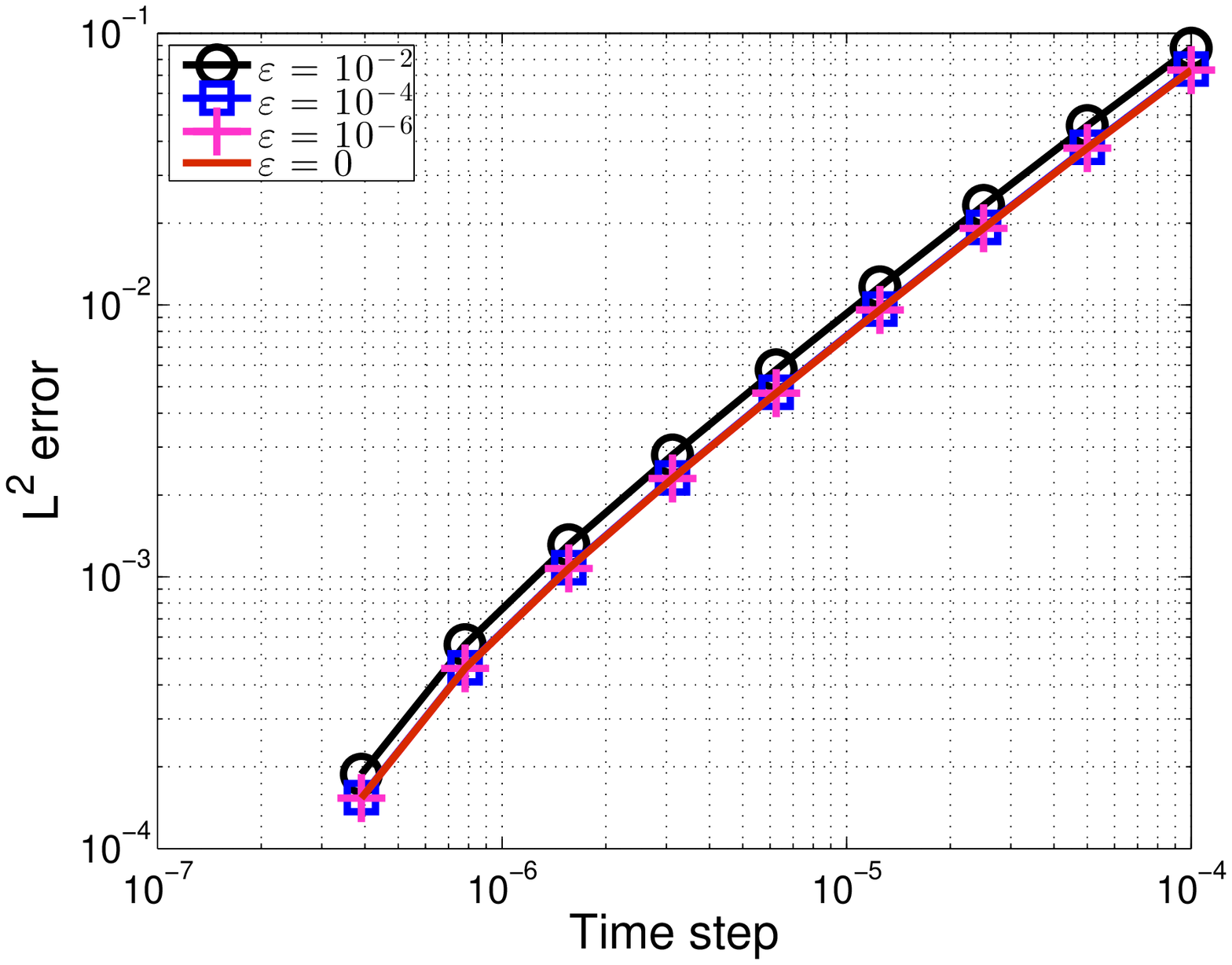}
\end{subfigure}
\begin{subfigure}[b]{0.45\textwidth}
\caption{$ \mathrm{L}^2 $ error on the electric potential}
\includegraphics[width=\textwidth]{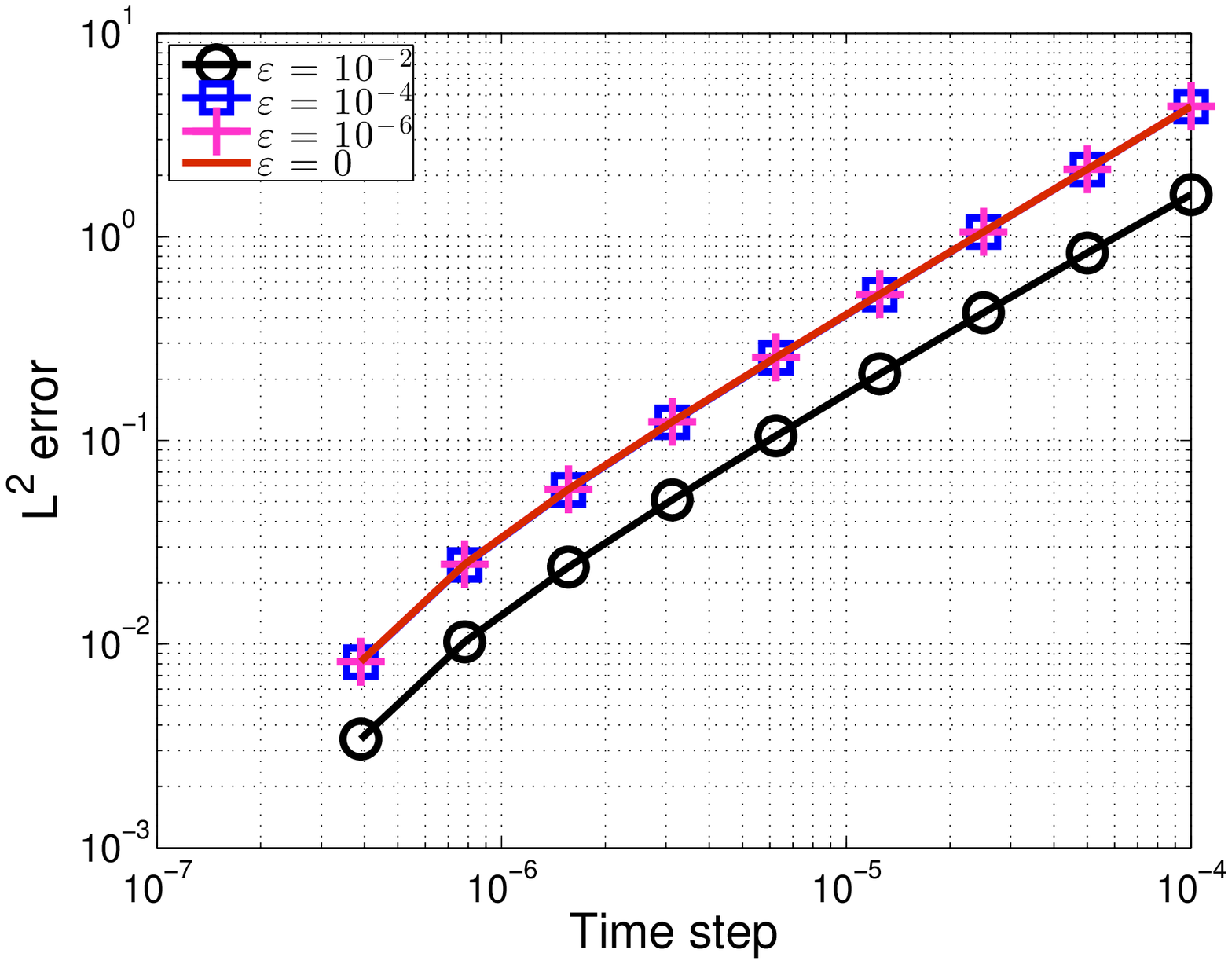}
\end{subfigure}
\end{center}
\caption{$ \mathrm{L}^2 $  errors with respect to the time step on the densities and the electric potential for different values of $ \varepsilon $ and at final time $ T= 10^{-3} $.} \label{erreurl2tps}
\end{figure}
\begin{figure}[h!]
\begin{center}
\begin{subfigure}[b]{0.45\textwidth}
\caption{$ \mathrm{L}^2 $ error on the electrons density}
\includegraphics[width=\textwidth]{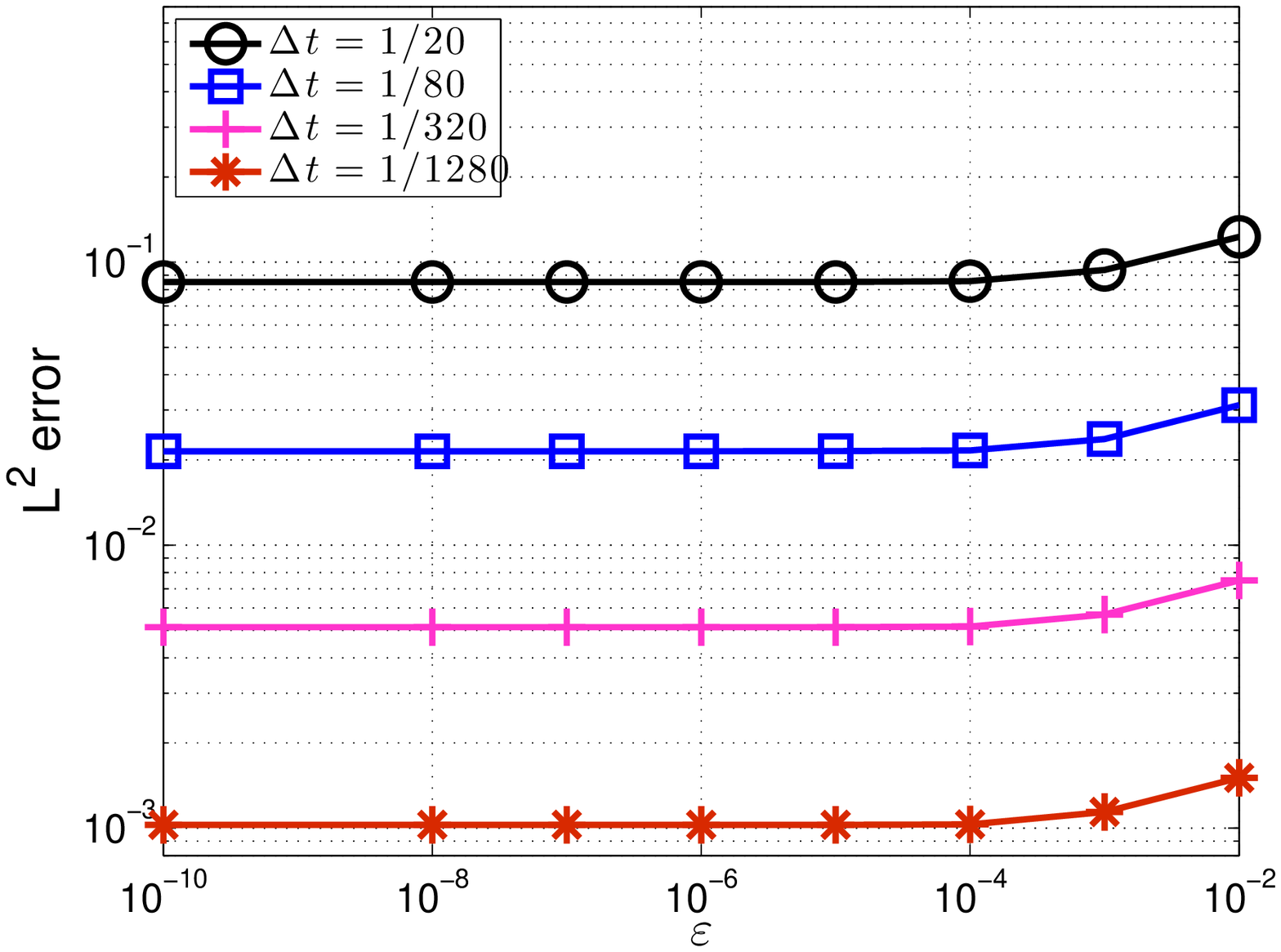}
\end{subfigure}
\begin{subfigure}[b]{0.45\textwidth}
\caption{$ \mathrm{L}^2 $ error on the cations density}
\includegraphics[width=\textwidth]{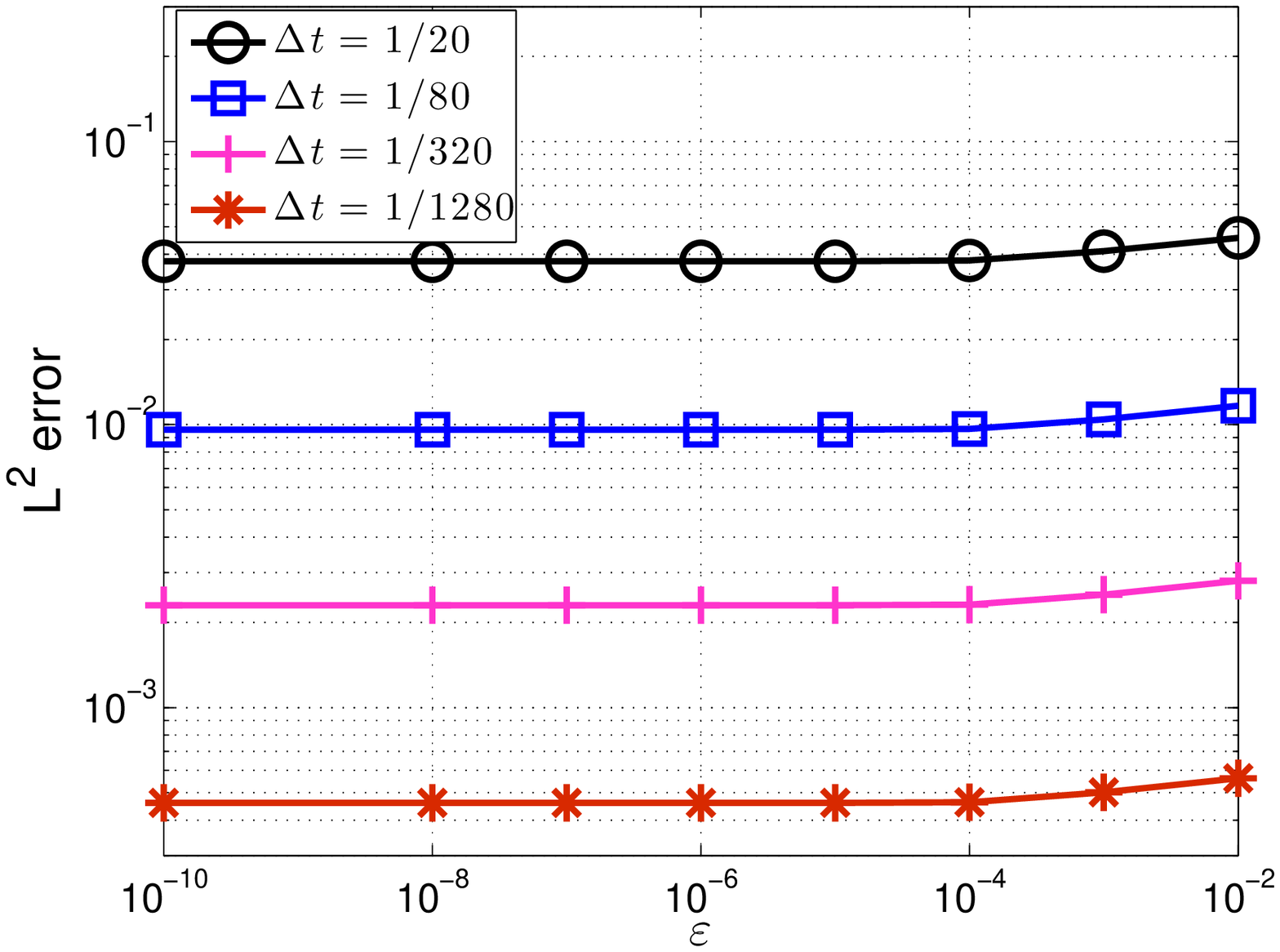}
\end{subfigure}
\begin{subfigure}[b]{0.45\textwidth}
\caption{$ \mathrm{L}^2 $ error on the electric potential}
\includegraphics[width=\textwidth]{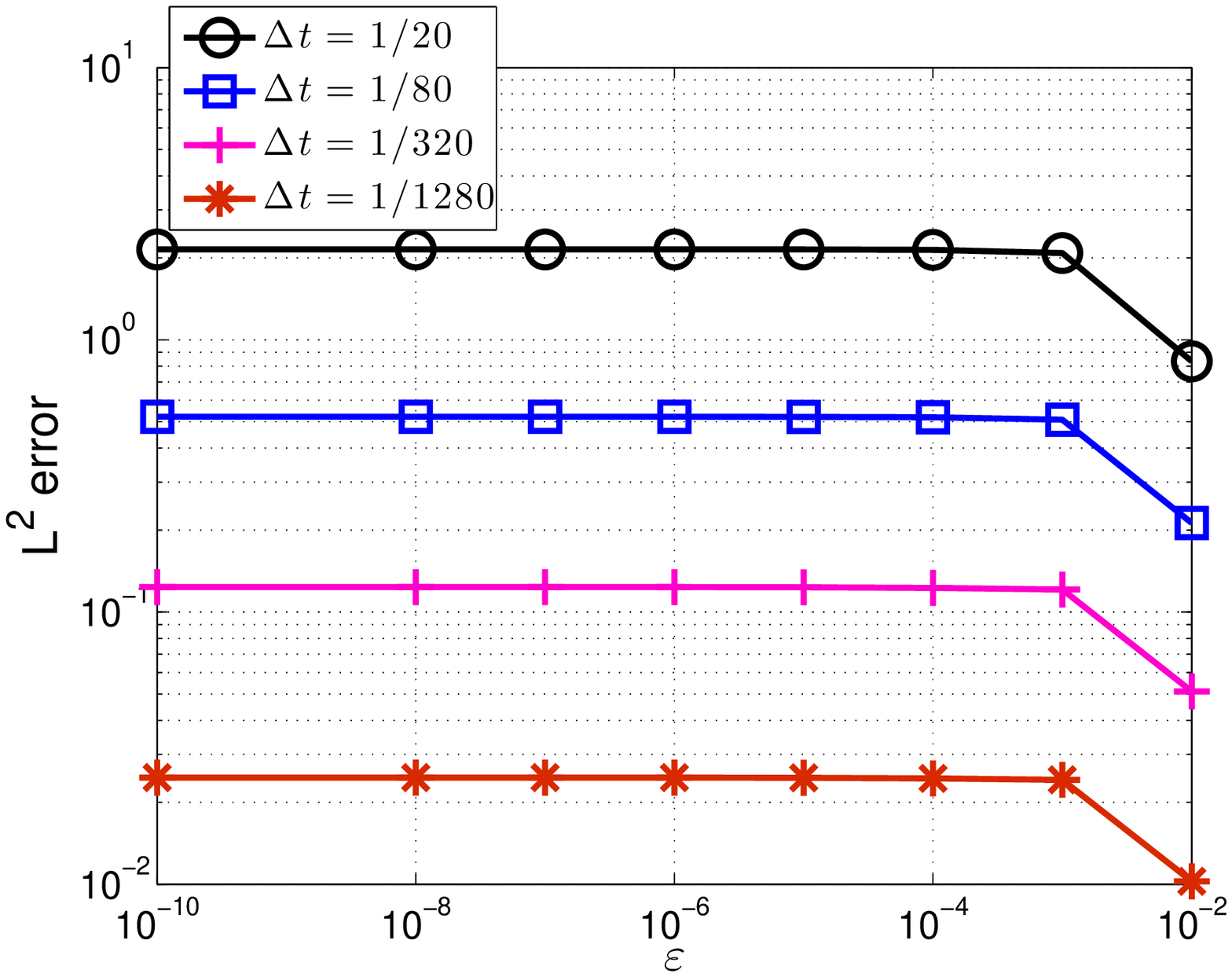}
\end{subfigure}
\end{center}
\caption{$ \mathrm{L}^2 $  error in space with respect to $ \varepsilon $ on the densities and the electric potential for different values of time step and at final time $ T= 10^{-3} $.} \label{erreurl2tpsepsi}
\end{figure}

%

\begin{figure}[h!]
\begin{center}
\begin{subfigure}[b]{0.45\textwidth}
\caption{Electrons density $ N $}
\includegraphics[width=\textwidth]{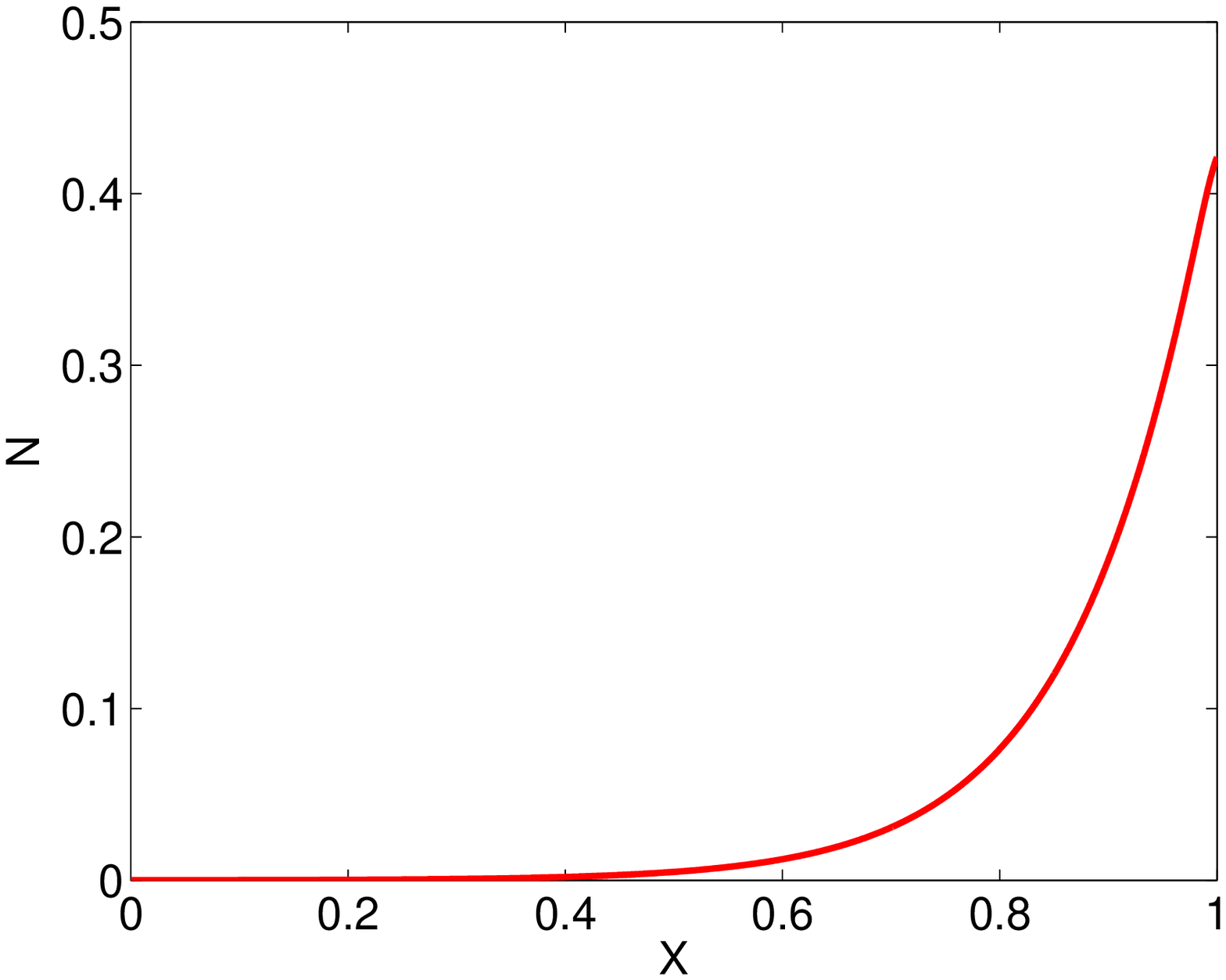}
\end{subfigure}
\begin{subfigure}[b]{0.45\textwidth}
\caption{Cations density $ P $}
\includegraphics[width=\textwidth]{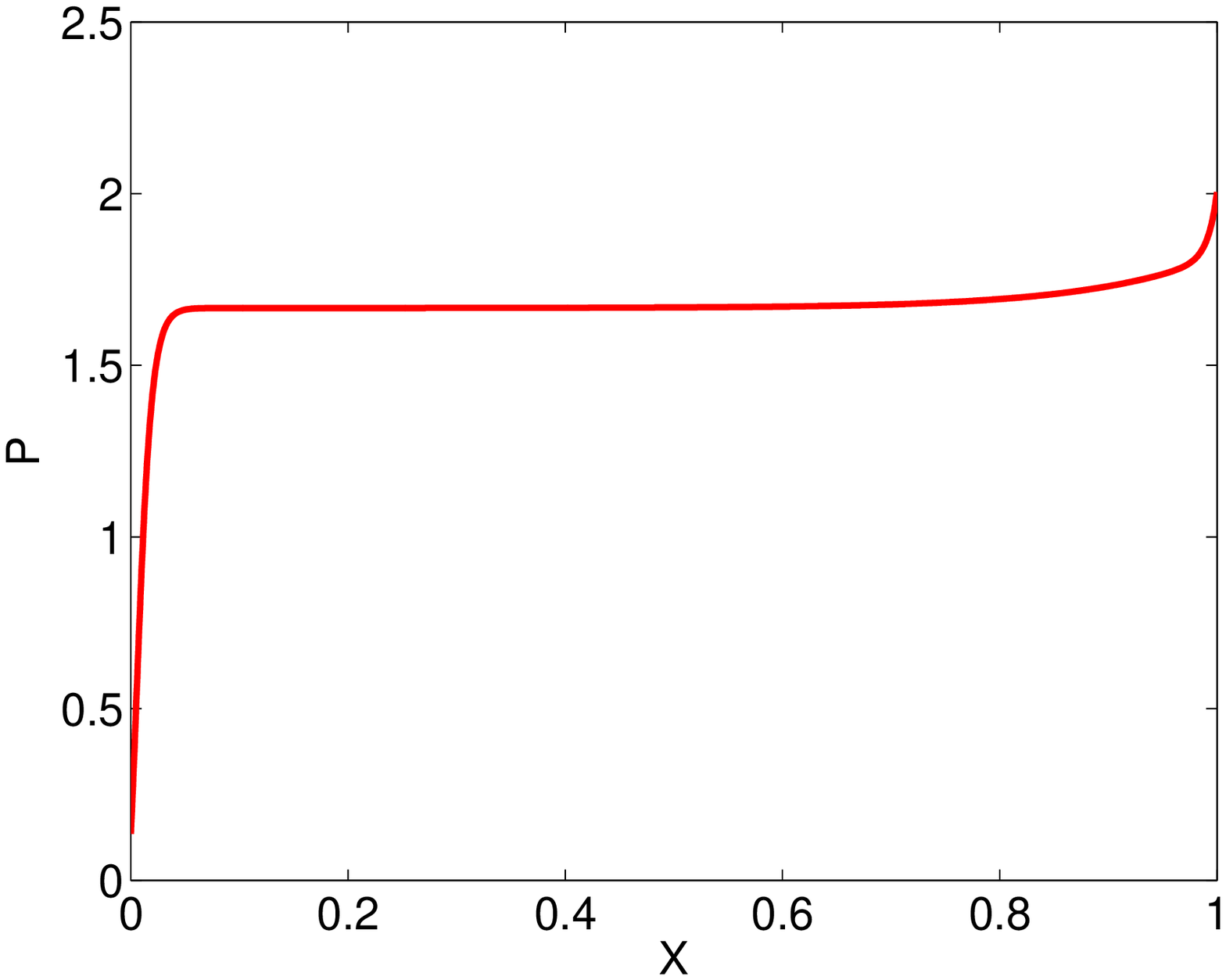}
\end{subfigure}
\begin{subfigure}[b]{0.45\textwidth}
\caption{Electric potential $ \Psi $}
\includegraphics[width=\textwidth]{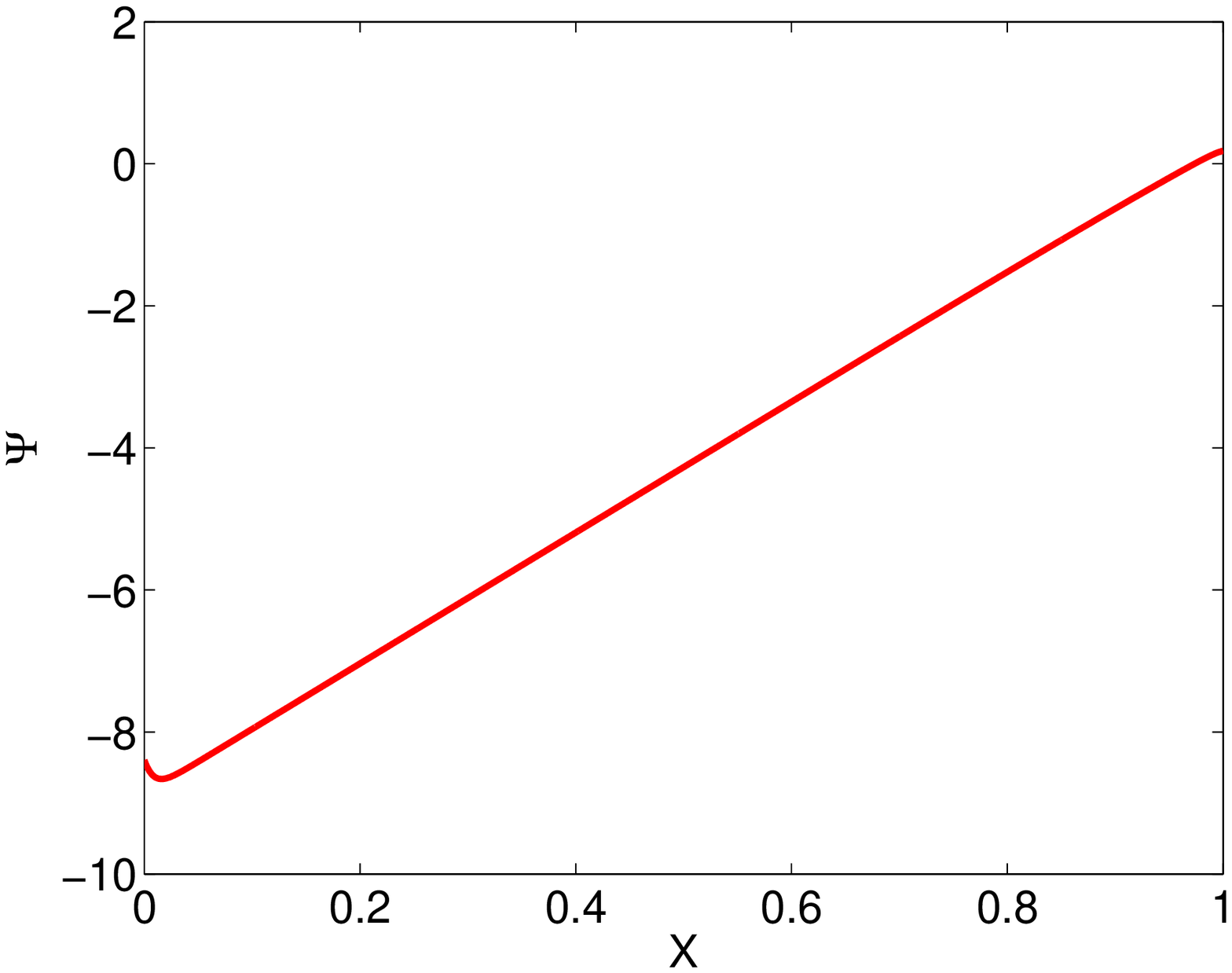}
\end{subfigure}
\end{center}
\caption{Densities and electric potential profiles for $ \varepsilon = 0 $, at final time $ T=1 $.} \label{densitepotentiel}
\end{figure}

\section{Conclusion}
In this paper, we prove the convergence of a numerical scheme consisting in an implicit Euler scheme in time and a Scharfetter-Gummel finite volume scheme in space for a corrosion model. The convergence proof is only valuable for $\varepsilon >0$. But, as mentioned in the introduction, the dimensionless parameter $\varepsilon$ is the ratio of the mobility coefficients for cations and electrons and  it is therefore very small. It should be set to 0 in the practical applications. 
 As seen in the numerical experiments, the scheme is robust with respect to the value of $\varepsilon$. It seems to be asymptotic preserving in the limit $\varepsilon\to 0$ (Figures \ref{erreurl2},\ref{erreurl2tps} and \ref{erreurl2tpsepsi}). In practice, we can set $\varepsilon=0$ and obtain the expected  behavior for the densities and the potential (Figure \ref{densitepotentiel}).

%


\section*{Acknowledgements}
This work was supported in part by the team INRIA/MEPHYSTO and the Labex CEMPI  (ANR-11-LABX-0007-01). 


\renewcommand{\refname}{7 \hskip0.5cm Bibliography}
\bibliographystyle{plain}
\bibliography{Biblio}

\begin{thebibliography}{10}

\bibitem{BBC12}
C.~Bataillon, F.~Bouchon, C.~Chainais-Hillairet, J.~Fuhrmann, E.~Hoarau, and
  R.~Touzani.
\newblock Numerical methods for the simulation of a corrosion model with moving
  oxide layer.
\newblock {\em J. Comput. Phys.}, 231(18):6213--6231, 2012.

\bibitem{BC08}
C.~Bataillon and C.~Chainais-Hillairet.
\newblock Mathematical and numerical study of a corrosion model.
\newblock {\em Numer. Math.}, 110(1):1--25, 2008.

\bibitem{BCD10}
C.~Bataillon, C.~Chainais-Hillairet, C.~Desgranges, E.~Hoarau, F.~Martin,
  S.~Perrin, M.~Turpin, and J.~Talandier.
\newblock Corrosion modelling of iron based alloy in nuclear waste repository.
\newblock {\em Electrochimica Acta}, 55(15):4451--4467, 2010.

\bibitem{Bes12}
M.~Bessemoulin-Chatard.
\newblock A finite volume scheme for convection-diffusion equations with
  nonlinear diffusion derived from the {S}charfetter-{G}ummel scheme.
\newblock {\em Numer. Math.}, 121(4):637--670, 2012.

\bibitem{BCV14}
M.~Bessemoulin-Chatard, C.~Chainais-Hillairet, and M.-H. Vignal.
\newblock Study of a finite volume scheme for the drift-diffusion system.
  {A}symptotic behavior in the quasi-neutral limit.
\newblock {\em SIAM J. Numer. Anal.}, 52(4):1666--1691, 2014.

\bibitem{BrCaHi13}
K.~Brenner, C.~Canc\`es, and D.~Hilhorst.
\newblock Finite volume approximation for an immiscible two-phase flow in
  porous media with discontinuous capillary pressure.
\newblock {\em Comput. Geosci.}, 17(3):573--597, 2013.

\bibitem{BMPcras}
F.~Brezzi, L.~D. Marini, and P.~Pietra.
\newblock M{\'e}thodes d'{\'e}l{\'e}ments finis mixtes et sch{\'e}ma de
  {S}charfetter-{G}ummel.
\newblock {\em C. R. Acad. Sci. Paris S{\'e}r. I Math.}, 305(13):599--604,
  1987.

\bibitem{BrMaPi89}
F~Brezzi, L.~D. Marini, and P.~Pietra.
\newblock Numerical simulation of semiconductor devices.
\newblock {\em Comput. Methods Appl. Mech. Engrg.}, 75:493--514, 1989.

\bibitem{BMPsinum}
F.~Brezzi, L.~D. Marini, and P.~Pietra.
\newblock Two-dimensional exponential fitting and applications to
  drift-diffusion models.
\newblock {\em SIAM J. Numer. Anal.}, 26(6):1342--1355, 1989.

\bibitem{BCH08}
C.~Chainais-Hillairet and C.~Bataillon.
\newblock Mathematical and numerical study of a corrosion model.
\newblock {\em Numer. Math.}, 110(1):1--25, 2008.

\bibitem{CV13}
C.~Chainais-Hillairet and I.~Lacroix-Violet.
\newblock On the existence of solutions for a drift-diffusion system arising in
  corrosion modelling.
\newblock {\em To appear in DCDS - B}, 2014.

\bibitem{ChLiPe03}
C.~Chainais-Hillairet, J.~G. Liu, and Y.~J. Peng.
\newblock Finite volume scheme for multi-dimensional drift-diffusion equations
  and convergence analysis.
\newblock {\em M2AN Math. Model. Numer. Anal.}, 37:319--338, 2003.

\bibitem{CP03}
C.~Chainais-Hillairet and Y.J. Peng.
\newblock Convergence of a finite-volume scheme for the drift-diffusion
  equations in 1{D}.
\newblock {\em IMA J. Numer. Anal.}, 23(1):81--108, 2003.

\bibitem{Cha11}
M.~Chatard.
\newblock Asymptotic behavior of the {S}charfetter-{G}ummel scheme for the
  drift-diffusion model.
\newblock In {\em Finite volumes for complex applications. {VI}. {P}roblems \&
  perspectives. {V}olume 1, 2}, volume~4 of {\em Springer Proc. Math.}, pages
  235--243. Springer, Heidelberg, 2011.

\bibitem{Ch1984}
F~Chen.
\newblock {\em Introduction to plasma physics and controlled fusion}.
\newblock Plenum Press, New-York, 1984.

\bibitem{ChCo95}
Z.~Chen and B.~Cockburn.
\newblock Analysis of a finite element method for the drift-diffusion
  semiconductor device equations: the multidimensional case.
\newblock {\em Numer. Math.}, 71:1--28, 1995.

\bibitem{DaV96}
H.B. Da~Veiga.
\newblock On the semiconductor drift-diffusion equations.
\newblock {\em Differ. Int. Eqs.}, 9:729--744, 1996.

\bibitem{EGH00}
R.~Eymard, T.~Gallou{\"e}t, and R.~Herbin.
\newblock Finite volume methods.
\newblock In {\em Handbook of numerical analysis, {V}ol. {VII}}, Handb. Numer.
  Anal., VII, pages 713--1020. North-Holland, Amsterdam, 2000.

\bibitem{FaIt95}
W.~Fang and K.~Ito.
\newblock Global solutions of the time-dependent drift-diffusion semiconductor
  equations.
\newblock {\em J. Differential Equations}, 123:523--566, 1995.

\bibitem{Ga1985}
H.~Gajewski.
\newblock On existence, uniqueness and asymptotic behaviour of solutions of the
  basic equations for carrier transport in semiconductors.
\newblock {\em ZAMM}, 65:101--108, 1985.

\bibitem{Ga1994}
H.~Gajewski.
\newblock On the uniqueness of solutions to the drift-diffusion model of
  semiconductors devices.
\newblock {\em M3AS}, 4:121--133, 1994.

\bibitem{GG96}
H.~Gajewski and K.~G{{\"a}}rtner.
\newblock On the discretization of van {R}oosbroeck's equations with magnetic
  field.
\newblock {\em Z. Angew. Math. Mech.}, 76(5):247--264, 1996.

\bibitem{GL12}
T.~Gallou{\"e}t and J.-C. Latch{\'e}.
\newblock Compactness of discrete approximate solutions to parabolic
  {PDE}s---application to a turbulence model.
\newblock {\em Commun. Pure Appl. Anal.}, 11(6):2371--2391, 2012.

\bibitem{Ga01}
I.~Gasser.
\newblock The initial time layer problem and the quasineutral limit in a
  nonlinear drift diffusion model for semiconductors.
\newblock {\em NoDEA Nonlinear Differential Equations Appl.}, 8:237--249, 2001.

\bibitem{ilin}
A.~M. Il'in.
\newblock A difference scheme for a differential equation with a small
  parameter multiplying the highest derivative.
\newblock {\em Mat. Zametki}, 6(237--248), 1969.

\bibitem{Ju1994}
A.~J{\"u}ngel.
\newblock On the existence and uniqueness of transient solutions of a
  degenerate nonlinear drift-diffusion model for semiconductors.
\newblock {\em M3AS}, 4:677--703, 1994.

\bibitem{Jun95}
A.~J{{\"u}}ngel.
\newblock Qualitative behavior of solutions of a degenerate nonlinear
  drift-diffusion model for semiconductors.
\newblock {\em Math. Models Methods Appl. Sci.}, 5(4):497--518, 1995.

\bibitem{Jun01}
A.~J{\"u}ngel.
\newblock {\em Quasi-hydrodynamic semiconductor equations}.
\newblock Progress in Nonlinear Differential Equations and their Applications,
  41. Birkh\"auser Verlag, Basel, 2001.

\bibitem{JP00}
A.~J{\"u}ngel and Y.J. Peng.
\newblock A hierarchy of hydrodynamic models for plasmas. {Z}ero-electron-mass
  limits in the drift-diffusion equations.
\newblock {\em Ann. Inst. H. Poincar\'e Anal. Non Lin\'eaire}, 17(1):83--118,
  2000.

\bibitem{JuPe00_2}
A.~J{\"u}ngel and Y.J. Peng.
\newblock Rigorous derivation of a hierarchy of macroscopic models for
  semiconductors and plasmas.
\newblock {\em International {C}onference on {D}ifferential {E}quations
  \textbf{1} ({B}erlin, 1999), World Sci. Publ., River Edge, NJ,}, pages
  1325--1327, 2000.

\bibitem{JuVi2007}
A.~J\"ungel and I.~Violet.
\newblock The quasi-neutral limit in the quantum drift-diffusion equations.
\newblock {\em Asymptotic Analysis}, 53:139--157, 2007.

\bibitem{LMV96}
R.~D. Lazarov, I.~D. Mishev, and P.~S. Vassilevski.
\newblock Finite volume methods for convection-diffusion problems.
\newblock {\em SIAM J. Numer. Anal.}, 33(1):31--55, 1996.

\bibitem{Mar86}
P.~A. Markowich.
\newblock {\em The stationary semiconductor device equations}.
\newblock Computational Microelectronics. Springer-Verlag, Vienna, 1986.

\bibitem{Mar90}
P.~A. Markowich, C.~A. Ringhofer, and C.~Schmeiser.
\newblock {\em Semiconductor equations}.
\newblock Springer-Verlag, Vienna, 1990.

\bibitem{SaSa97}
R.~Sacco and F.~Saleri.
\newblock Mixed finite volume methods for semiconductor device simulation.
\newblock {\em Numer. Methods Partial Differential Equations}, 13:215--236,
  1997.

\bibitem{SG69}
D.~L. Scharfetter and H.~K. Gummel.
\newblock Large signal analysis of a silicon read diode oscillator.
\newblock {\em IEEE Trans. Electron Dev.}, 16:64--77, 1969.

\bibitem{vR}
W.~Van~Roosbroeck.
\newblock Theory of the flow of electrons and holes in germanium and other
  semiconductors.
\newblock {\em Bell System Tech. J.}, 29:560--607, 1950.

\end{thebibliography}

\end{document}